\theoremstyle{plain}
\newtheorem{theorem}{Theorem}[section]
\newtheorem{proposition}[theorem]{Proposition}
\newtheorem{lemma}[theorem]{Lemma}
\theoremstyle{definition}
\newtheorem{definition}[theorem]{Definition}
\newtheorem{example}[theorem]{Example}
\theoremstyle{remark}
\newtheorem*{remark}{Remark}
\newcommand{\floor}[1]{\left\lfloor #1 \right\rfloor}
\newcommand{\eps}{\epsilon}
\newcommand{\CC}{\mathbb C}
\newcommand{\NN}{\mathbb N}
\newcommand{\QQ}{\mathbb Q}
\newcommand{\ZZ}{\mathbb Z}
\newcommand{\PP}{\mathbb P}
\newcommand{\opname}{\operatorname}
\DeclareMathOperator{\Aut}{Aut}
\renewcommand{\Im}{\operatorname{Im}}
\DeclareMathOperator{\Hom}{Hom}
\DeclareMathOperator{\End}{End}
\DeclareMathOperator{\Lie}{Lie}
\newcommand{\mc}{\mathcal}
\newcommand{\mf}{\mathfrak}
\newcommand{\msf}{\mathsf}
\DeclareMathOperator{\Rep}{\operatorname{Rep}}
\newcommand{\taking}[1]{\xrightarrow{#1}}
\newcommand{\bbone}{\text{\usefont{U}{bbold}{m}{n}1}}
\newcommand{\bs}{\boldsymbol}
\newcommand{\ev}{\opname{ev}}
\newcommand{\msV}{\mathsf{V}}
\newcommand{\tb}{\mathscr{V}}
\newcommand{\pst}{\mathcal{P}}
\DeclareMathOperator{\rpp}{rpp}
\DeclareMathOperator{\col}{c}
\DeclareMathOperator{\weight}{wt}
\newcommand{\pol}{T^{1/2}}
\newcommand*\circled[1]{\tikz[baseline=(char.base)]{
            \node[shape=circle,draw,inner sep=1.5pt,minimum height=18pt,minimum width=18pt] (char) {#1};}}
\newcommand*\squared[1]{\tikz[baseline=(char.base)]{
            \node[shape=rectangle,draw,inner sep=1.5pt,minimum height=18pt,minimum width=18pt] (char) {#1};}}
\title{Vertex functions of type $D$ Nakajima quiver varieties}
\author{Hunter Dinkins and Jiwu Jang}
\begin{document}







\maketitle

\abstract{
    \quad We study the quasimap vertex functions of type $D$ Nakajima quiver varieties that have framings only at minuscule nodes. We compute the coefficients of the vertex functions in the $K$-theoretic fixed point basis. We also give an explicit combinatorial description of zero-dimensional type $D$ quiver varieties and their vertex functions using the combinatorics of minuscule posets. 
    
    \quad We prove that these zero-dimensional vertex functions can be expressed as products of $q$-binomial functions. This proves a degeneration of the conjectured 3d mirror symmetry of vertex functions which allows one to extract the character of the tangent space of a Coulomb branch from the vertex function of the corresponding Higgs branch. The proof relies on the theory of Macdonald polynomials and proceeds by iterated applications of the Cauchy identity. As a by-product of the proof, we provide an interpretation of type $D$ spin vertex functions as the partition functions of the half-space Macdonald processes of Barraquand, Borodin, and Corwin. This hints that the geometry of quiver varieties may provide new examples of integrable probabilistic models.
}

\setcounter{tocdepth}{1}
\tableofcontents

\section{Introduction}\label{sec:introduction}

\subsection{Quiver varieties and 3d mirror symmetry}

This paper is about vertex functions of type $D$ Nakajima quiver varieties. Nakajima quiver varieties are one of the central objects in geometric representation theory, defined by Nakajima in \cite{nakajima1994instantons} to give a geometric construction of representations of Kac--Moody algebras. They include as special cases cotangent bundles of partial flag varieties and moduli spaces of sheaves on resolutions of Kleinian singularities. They are also one of the main ways of constructing examples of symplectic resolutions, and hence fit into the programs of symplectic duality and 3d mirror symmetry studied, for example, in \cite{aganagic2021elliptic, blpw2014quantizations, dinkins20203dflag, dinkins2022symplectic, kamnitzer2022symplectic, kononov22pursuing2, intriligatorseiberg1996mirror, rimanyi2021mirror}.

Vertex functions, defined by Okounkov in \cite{okounkov2017lectures}, are generating functions for certain counts of quasimaps to quiver varieties. They are $K$-theoretic analogs of $I$-functions from Gromov--Witten theory. They provide interesting examples of $q$-series, containing so-called basic hypergeometric series \cite{gasperrahman2004}, and are central in 3d mirror symmetry.

\subsection{Vertex functions in type \texorpdfstring{$D$}{D}}

In the existing literature, most results about vertex functions are split between those proven for arbitrary quiver varieties, such as \cite{aganagicokounkov2017quasimap, Pushk2, okounkov2017lectures}, and those relying on explicit computations in the type $A$ or hypertoric settings, for example \cite{dinkins2024vertex, dinkinssmirnov2020quasimaps, Liuqmstab,  tRSKor, KorZeit2, psz2020baxterquantum, smirnovzhou2022hypertoric}. The main goal of this paper is to produce explicit computations of vertex functions for $D$ quiver varieties. One immediate complication is that the set of torus fixed points on these varieties need not be zero-dimensional, as it is in the type $A$ or hypertoric settings. Indeed, the geometry of the fixed components of type $D$ quiver varieties need not be trivial, and their Betti numbers were described by Nakajima in \cite{nakajima2004quiver}.

To deal with this, we will restrict ourselves to quiver varieties with framing vectors which are nonzero only at minuscule vertices, for which this issue does not arise\footnote{In general, the moduli space of quasimaps used to define vertex functions can have nontrivial torus fixed components, even if the quiver variety has finite fixed points. A consequence of this paper is that the restriction to minuscule framings is sufficient (though not strictly necessary) to guarantee that torus fixed quasimaps will be isolated as well.}. On such quiver varieties, we will give an explicit combinatorial description of the torus fixed points in terms of the so-called minuscule posets (see \cite{proctor1999dynkin, proctor1986dynkinclassification, stembridge1994minuscule}); see Theorems~\ref{posetthm1} and~\ref{posetthm2}. For the case of a one-dimensional framing where the quiver varieties parameterize certain modules over the preprojective algebra of the quiver, this relationship was noticed before in \cite{dekmf2024heaps}. From this description, we can immediately read off the weights of the restrictions of the tautological bundles to fixed points; see Equation~\eqref{tbweights}. This poset description also turns out to be enough to allow us to use the localization theorem to write explicit formulas for these type $D$ vertex functions.

\begin{theorem}[Theorem~\ref{verformula}]
    There is an explicit combinatorial formula for vertex functions of type $D$ quiver varieties with framings at minuscule vertices.
\end{theorem}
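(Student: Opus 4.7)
The plan is to apply K-theoretic virtual equivariant localization to the moduli space $\mathsf{QM}^{\vec d}_{\mathrm{nonsing}\,\infty}$ of degree $\vec d$ stable quasimaps from $\mathbb{P}^1$ to the quiver variety $X$ that are nonsingular at $\infty$, and then to package the contributions into a generating series in the Kähler variables $z^{\vec d}$. Concretely, the vertex function is
\[
V(z) \;=\; \sum_{\vec d}\, z^{\vec d}\, \chi\!\left(\mathsf{QM}^{\vec d}_{\mathrm{nonsing}\,\infty},\,\widehat{\mathcal O}^{\mathrm{vir}}\right),
\]
and by localization this equals a sum over torus fixed quasimaps of the corresponding symmetrized virtual structure sheaf contributions.

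The first step is to combine Theorems~\ref{posetthm1} and~\ref{posetthm2} with the minuscule framing assumption to see that the torus fixed points of $X$ are isolated and are indexed by order ideals $I$ of the product of minuscule posets; here the weights of the tautological bundles at a fixed point are read off from~\eqref{tbweights}. I would then argue that the same minuscule condition forces the torus fixed components of $\mathsf{QM}^{\vec d}_{\mathrm{nonsing}\,\infty}$ to be isolated. A torus fixed quasimap splits the tautological bundles on $\mathbb{P}^1$ as sums of $\mathcal{O}(-d)$'s with prescribed equivariant weights, and its restriction at $\infty$ is a fixed point of $X$; because the framing bundles are attached only at minuscule nodes, the equivariant splitting type together with the fixed point at $\infty$ rigidifies the quasimap completely, so the fixed set is the disjoint union over $(I,\vec d)$ of isolated points.

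Next, I would compute the polarized virtual tangent character at each such isolated fixed quasimap $(I,\vec d)$ from the quasimap obstruction theory, namely as the alternating sum of $H^\bullet(\mathbb{P}^1,-)$ applied to the polarization at the relevant tensor products of tautological and framing bundles twisted by $\mathcal{O}(-d_i)$. The loop equivariant parameter $q$ enters through the scaling action on $\mathbb{P}^1$, and the Euler class of each $H^0-H^1$ difference becomes a finite product of $q$-shifted factors whose arguments are explicit monomials in the equivariant parameters, as dictated by the poset weights at $I$. Applying the symmetrized $\widehat{a}$-class then yields, for each $(I,\vec d)$, a contribution that is an explicit product over the arrows of the type $D$ quiver and over the boxes of the minuscule poset of ratios of $q$-Pochhammer-like symbols.

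Summing these contributions over all order ideals $I$ and all nonnegative degree vectors $\vec d$ produces the claimed explicit combinatorial formula. The main obstacle is the rigidity step for quasimap fixed loci: even though fixed components on $X$ are automatically points in the minuscule framed setting, it still requires a careful check that no positive-dimensional families of fixed quasimaps arise from torus-equivariant line bundle reparametrizations on $\mathbb{P}^1$; the remaining bookkeeping of edge, vertex, and framing contributions in terms of the poset data is then a direct application of Ciocan-Fontanine--Kim virtual localization.
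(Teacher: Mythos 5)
Your proposal follows essentially the same route as the paper: equivariant localization on $\msf{QM}^{\bs d}_{\mathrm{ns},\infty}$, with fixed points of the quiver variety described by minuscule posets (Theorems~\ref{posetthm1} and~\ref{posetthm2} plus the tensor product structure), and fixed quasimaps rigidified by their equivariant splitting type together with their value at $\infty$. The one refinement worth making explicit is that the admissible splitting types over a fixed point $p$ are precisely the reverse plane partitions over the poset $\pst_{p}$ --- the degree assigned to each weight line must be monotone along the partial order so that the section defining the quasimap extends over $\mathbb{P}^{1}$ --- which is the content of Proposition~\ref{fixedqm} and is what turns the localization sum into the explicit combinatorial formula of Theorem~\ref{verformula}.
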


\subsection{Zero-dimensional type \texorpdfstring{$D$}{D} varieties}

As proven in \cite{okounkov2017lectures}, when one sends equivariant parameters to some infinity in the vertex function of a quiver variety, one obtains the vertex function of the fixed point set. Such vertex functions are nontrivial even if the fixed points are isolated. From a sufficiently deep understanding of them, one can read off formulas for the characters of the tangent space of the 3d mirror dual variety; see \cite{dinkinssmirnov2020characters} for more details. Vertex functions of zero-dimensional type $A$ quiver varieties were studied in \cite{dinkinssmirnov2020quasimaps}, where it was shown that such varieties are in natural bijection with Young diagrams. Certain product formulas, which are the type $A$ version of our Theorem~\ref{thmintro1} below, were also proven.

For our goals in this paper, it is thus important to study the (nontrivial) vertex functions for zero-dimensional type $D$ quiver varieties. We turn our attention to this in Section~\ref{sec:main-thm}, proving a formula expressing the vertex functions of these zero-dimensional type $D$ quiver varieties as products of $q$-binomial series.

To state the formula, we need some notation. We fix a type $D_{n}$ quiver and denote by $\Phi$ the set of roots. Let $\alpha_{i}$ for $1 \leq i \leq n$ be a choice of simple roots, $\omega_{i}$ be the fundamental weights, and $\Phi^{+}$ be the set of positive roots. Let $\msf{v}$ and $\msf{w}$ be the dimension and framing vectors, respectively, such that $\msf{w}_{i}=1$ for a single minuscule vertex $i$ and is $0$ otherwise. Let $\mc{M}(\msf{v},\msf{w})$ be the Nakajima quiver variety associated to this data. As usual, it should be thought of as associated to the space of weight $\mu:=\sum_{i} \msf{w}_{i} \omega_{i} - \sum_{i} \msf{v}_{i} \alpha_{i}$ inside a representation with highest weight $\lambda:=\sum_{i} \msf{w}_{i} \omega_{i}$. The vertex function $V(\bs z)$ of $\mc{M}(\msf{v},\msf{w})$ is an element of the ring $\mathbb{Q}(q,\hbar)[[z_{i}]]_{1 \leq i \leq n}$. We identify this ring with (a completion of) the group algebra of positive roots by the assignment $e^{\alpha_{i}}=\left(q/\hbar\right)^{(\msf{v}-A_{Q}\msf{v})_{i}}z_{i}$ where $A_{Q}$ is the adjacency matrix of the quiver.

\begin{theorem}[Theorem~\ref{thm:product-identity-d}]\label{thmintro1}
    Let $\Phi^{+}_{\mu} = \{\alpha \in \Phi^{+} \mid (\mu, \alpha) < 0\}$. The vertex function $\msV(\bs z)$ of $\mc{M}(\msf v, \msf w)$ factorizes into the following product of $q$-binomial functions:
    \[
        \msV(\bs z) = \prod_{\alpha \in \Phi^{+}_{\mu}} F(e^{\alpha}),
    \]
\end{theorem}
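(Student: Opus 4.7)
The plan is to start from the localization formula of Theorem~\ref{verformula}, which expresses $\msV(\bs z)$ as a sum over order ideals in the minuscule poset associated to $(\msf v, \msf w)$. Because the framing is concentrated at a minuscule vertex, fixed points are isolated and the weights of the tautological bundles at each fixed point are the explicit monomials read off from the poset via~\eqref{tbweights}. The first step is to reorganize this sum so that the summation index becomes a chain of Young-diagram-like partitions interlacing under the Macdonald branching rules, instead of an order ideal in the minuscule poset. This reorganization exploits the layered structure of type $D$ minuscule posets, which fibre over smaller posets of type $A$ shape in a way compatible with the tautological weights.

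The next step is to recognize this reorganized sum as the partition function of the half-space Macdonald process of Barraquand, Borodin, and Corwin. Under the matching $(q,t) \mapsto (q, q/\hbar)$ (up to a convention-dependent rescaling), the Macdonald specializations entering the process are supplied by monomials in the $z_i$ indexed by layers of the poset, and the interlacing conditions from the process reproduce the covering relations of the minuscule poset. The half-space structure encodes the fork in the type $D_n$ Dynkin diagram: the two spin nodes act as the reflecting boundary, so chains of partitions that would proceed past the fork in type $A$ are instead folded back. Establishing this identification is the content of the probabilistic interpretation mentioned in the introduction, and once in hand it exposes the algebraic structure underlying the product formula.

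With the sum identified as a half-space Macdonald partition function, the product factorization follows by iterated application of the Cauchy identity for Macdonald polynomials along the bulk of the chain, together with the Littlewood identity at the boundary. At the minuscule specialization the relevant Macdonald polynomials degenerate to a single monomial, so each Cauchy step collapses a pair of adjacent partitions into a factor of the form $\prod_{k \geq 0} (1 - \hbar q^k uv)/(1 - q^k uv)$; after the substitution $e^{\alpha_i} = (q/\hbar)^{(\msf v - A_Q \msf v)_i} z_i$ this factor is exactly $F(e^\alpha)$ for a single positive root $\alpha$. The Littlewood step at the boundary produces the $F$-factors coming from the long roots that arise from the type $D$ folding.

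The main obstacle is bookkeeping on two fronts. First, matching the localization sum to a half-space Macdonald partition function is not automatic: one must check that the poset interlacings reproduce the Macdonald branching conditions and that the localization weights match the Macdonald kernel term by term, including the extra $(q/\hbar)$-twists from the adjacency matrix. Second, one must verify that the roots produced by the iterated Cauchy and Littlewood steps are exactly those in $\Phi^+_\mu$, with all other roots in $\Phi^+$ either absent or contributing trivial factors. This requires tracking, throughout the contraction of the chain, which bilinear pairings of Macdonald specializations actually survive, and identifying the sign condition $(\mu,\alpha) < 0$ with non-triviality of the corresponding pairing. The type $A$ product formula proven in~\cite{dinkinssmirnov2020quasimaps} serves as a useful sanity check: its proof should arise as the restriction to chains that never reach the reflecting boundary.
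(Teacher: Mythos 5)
Your outline accurately reproduces the paper's argument for the case of a framing at a spin node: the localization sum of Theorem~\ref{verformula} is reorganized into a sum over interlacing chains of partitions (Proposition~\ref{prop:vertex-fn-lambda}), the summand is identified with the weight of a half-space Macdonald process (Lemma~\ref{lem:lhs-identification}), the sum is evaluated by the Cauchy and Littlewood identities as in Proposition~2.2 of \cite{bbc2020halfspacemacdonald}, and the surviving factors are matched with $\Phi^{+}_{\mu}$ (Lemma~\ref{lem:roots-identity}). One small correction: type $D$ is simply laced, so there are no ``long roots''; the boundary factors $F(x_i x_j)$ correspond to the roots $\tau(i)\eps_i + \tau(j)\eps_j$, all of the same length.

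The genuine gap is that the theorem also covers the minuscule framing $\msf{w}=\delta_1$ at the vector node, and for that case the half-space Macdonald process identification on which your entire argument rests does not apply --- the paper says so explicitly at the start of Section~\ref{sec:thm-proof2}. The fundamental-node poset produces chains whose weights are not of the half-space form (skew $P$'s and $Q$'s capped by the even-partition boundary sum $\mc{E}_\lambda$), and the target product $\prod_{i=2}^{n} F(x_1 x_i)F(x_1 x_i^{-1})$ contains factors with inverted variables that neither the Cauchy nor the Littlewood identity produces directly. The paper instead needs: a reduction lemma (Lemma~\ref{delta1-reduction}) peeling the problem down to $\msf{v}=(2^{n-2},1,1)$ and $\msf{v}=(1,2^{n-3},1,1)$; an identification of the vertex with a sum involving both $P$ and $Q$ one-row polynomials (Lemmas~\ref{verequalsmac} and~\ref{verequalsmac2}); and then the Pieri rule together with the inversion symmetry $P_{(a)}(x_1^{-1},\dots,x_m^{-1})=(x_1\cdots x_m)^{-a}P_{(a^{m-1})}(x_1,\dots,x_m)$ before the Cauchy identity can be applied. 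None of this is present in, or follows from, your proposal, so as written it establishes only the spin-node half of the theorem.
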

In the previous theorem, $F$ is the $q$-binomial function, defined by
\[
    F(z):=\prod_{i=0}^{\infty} \frac{1-\hbar z q^i}{1-z q^i} 
\]

Our method of proof is the following. First, we start with the localization formula for the vertex function, Theorem~\ref{verformula}, which works for type $D$ quiver varieties that are not necessarily single points. For quiver varieties which are single points, and for these only, we are able to rewrite the localization formula as a certain sum of skew-Macdonald polynomials. Finally, we compute this sum by repeated applications of the skew-Cauchy identity.

\subsection{Consequences}

We mention a few applications of Theorem~\ref{thmintro1}. As discussed above, one is supposed to be able to read off from this the character of the tangent space of the 3d mirror dual variety. By comparing with the formulas of \cite{krylovperunov2021almost}, Theorem~\ref{thmintro1} shows immediately that this is the case. Vertex functions for arbitrary quiver varieties are also conjectured to have a certain relation to the vertex functions of their 3d mirror dual varieties \cite{aganagic2021elliptic, bottadink, dinkins20203dflag, dinkins2022symplectic, smirnovzhou2022hypertoric}. All of the cases in which this has been proven use formulas like Theorem~\ref{thmintro1} as an ingredient. We hope that this will one day be the case for Theorem~\ref{thmintro1} as well.

The techniques used here also allow us to study vertex functions with descendants. As in \cite{dinkinssmirnov2020capped}, descendants given by exterior powers of tautological vector bundles can be inserted by applying a Macdonald difference operator to the vertex with trivial descendant. More generally, any $q$-difference operator acting diagonally on Macdonald polynomials with eigenvalues given as a function of $\hbar^{n-i} q^{\lambda_{i}}$ (for example, Noumi's $q$-difference operator from \cite{noumisano2021ruijsenaars}), can be used to insert descendants. As in \cite{dinkinssmirnov2020capped}, one can interpret this as giving an explicit rational formula for the \emph{capped} descendant vertex, an object significantly harder to compute than the (bare) vertex.

When the framing $\msf{w}$ is at a spin node, Theorem~\ref{thm:product-identity-d} also has a probabilistic interpretation in terms of Macdonald processes. Macdonald processes were first introduced by A.~Borodin and I.~Corwin in \cite{borodincorwin2014macdonald}. They are probability measures on sequences of partitions defined in terms of Macdonald symmetric functions and depending on two parameters $q$ and $\hbar$. \emph{Half-space} Macdonald processes were introduced by Barraquand, Borodin, and Corwin in \cite{bbc2020halfspacemacdonald}, and have connections to interesting half-space systems, such as the Kardar--Parisi--Zhang (KPZ) stochastic PDE (see \cite{kpz1986dynamic, corwin2012kardar}) and the log-gamma directed polymer in a half-quadrant (see \cite{oconnell2013geometricrsk}).

In the case of type $D_{n}$ quiver varieties with framing $\msf{w}=(0,\ldots,0,1)$, Theorem~\ref{thmintro1} shows that vertex functions are exactly partition functions for a half-space Macdonald process. The type $A$ version of this statement, relating vertex functions to ordinary Macdonald processes is a consequence of \cite{dinkinssmirnov2020quasimaps}, though it was not noticed there. From this perspective, vertex functions with descendants compute the expectation values of certain observables of (ordinary or half-space) Macdonald processes.

\subsection{Future directions}

The results of this paper point to several future directions.

One such direction is to complete the study of simply laced Dynkin diagrams by studying vertex functions of type $E$ quiver varieties where the framing is a sum of minuscule framings. Indeed, the description of fixed points in terms of minuscule posets and the subsequent localization formula for vertex functions, applies immediately to this setting. We hope that the symmetric function techniques of this paper should be applicable to that setting as well, leading to product formulas for the vertex functions of zero-dimensional type $E$ quiver varieties.

There are also quiver varieties which are not Dynkin type which still have some zero-dimensional torus fixed components. We believe the methods of this paper can be extended to cover some of these situations as well, namely, those in which the torus fixed points can be described as slant sums, see \cite{proctor1999dynkin}, of minuscule posets.

Finally, given the correspondence between the partition function of Macdonald processes and half-space Macdonald processes with vertex functions of type $A$ and $D$ quiver varieties, we hope that the geometry of quiver varieties will give rise to new and interesting probabilistic processes. According to this dream, this paper already provides something new: vertex functions of type $D$ quiver varieties with framing $\msf{w}=(1,0,\ldots,0)$ should be interpreted as the partition function of a new probabilistic process. An expression for this partition function in terms of symmetric polynomials can be found in Lemma \ref{verequalsmac}. Whether this process is related to other interesting probabilistic models like in the spin case is at present unknown to us. We hope to revisit this in the future.

\subsection{Outline}
This paper is structured as follows.

In Section~\ref{sec:background}, we give background on Nakajima quiver varieties.

In Section~\ref{sec:macdonald}, we review the properties of Macdonald polynomials that we require.

In Section~\ref{sec:stable-quiver-rep}, we give an explicit description of zero-dimensional type $D$ quiver varieties with a single framing at a minuscule vertex. We formulate this in terms of the combinatorics of minuscule posets and explain how to read off weights of tautological vector bundles from this description.

In Section~\ref{sec:type-d-vertex-functions}, we review the definition of vertex functions. Then we spell out the localization formula in the type $D$ setting.

In Section~\ref{sec:main-thm}, we state our product formula for zero-dimensional type $D$ quiver varieties and explain some applications.

In Sections~\ref{sec:thm-proof1} and~\ref{sec:thm-proof2} we prove the product formula using the localization theorem and properties of Macdonald polynomials.

\section*{Acknowledgments}

The authors would like to thank Alexei~Borodin for providing significant insight for proof of Theorem~\ref{thm:product-identity-d}. This project also benefited from correspondence with Vasily~Krylov, Elijah~Bodish, and Hiraku~Nakajima. The authors would like to extend this gratitude to the PRIMES-USA program at MIT for the research opportunity and hospitality. H.~\!Dinkins was supported by National Science Foundation grant DMS-2303286 at MIT and the National Science Foundation Research Training Group grant Algebraic Geometry and Representation Theory at Northeastern University DMS-1645877.

\section{Background}\label{sec:background}

The goal of this section is to review the definition of Nakajima quiver varieties, defined in \cite{nakajima1998quiverkacmoody, nakajima1994instantons}, which are moduli spaces of certain quiver representations. The following material is standard, and can be found in \cite{ginzburg2009lectures}, for example.

\subsection{Definition of Nakajima quiver varieties}\label{subsec:nakajima}

Let $Q = (Q_0, Q_1)$ be a quiver with vertex set $Q_0$ and arrow set $Q_1$.  Let $o(e)$ and $i(e) \in Q_0$ denote the outgoing (source) and incoming (target) vertices of the arrow $e \in Q_1$, respectively, so that $e$ is an arrow from $o(e)$ to $i(e)$. Equivalently, this is written as $o(e) \taking{e} i(e) \in Q_1$.

Choose $\msf{v},\msf{w} \in \mathbb{N}^{Q_0}$. Let $V_{i}$ and $W_{i}$ be complex vector spaces such that $\dim V_{i}=\msf{v}_{i}$ and $\dim W_{i}=\msf{w}_{i}$. Let
\[
    \Rep_{Q}(\msf{v},\msf{w}):=\bigoplus_{e \in Q_1}\Hom\left(V_{o(e)},V_{i(e)}\right) \oplus \bigoplus_{i \in Q_0} \Hom\left(W_{i},V_{i}\right).
\]
This is known as the space of framed representations of $Q$. The vector $\msf{v}$ is called the dimension, and $\msf{w}$ is called the framing dimension. The trace pairing identifies $\Hom(V_{i},V_{j})^{*}$ with $\Hom(V_{j},V_{i})$ and thus induces an isomorphism
\begin{equation}
    \begin{split}\label{T*rep}
        T^{*} \Rep_{Q}(\msf{v},\msf{w}) & \cong \bigoplus_{e \in Q_1}\Hom\left(V_{o(e)},V_{i(e)}\right) \oplus \Hom\left(V_{i(e)},V_{o(e)}\right) \\
                                        & \qquad\oplus \bigoplus_{i \in Q_0} \Hom\left(W_{i},V_{i}\right) \oplus \Hom\left(V_{i},W_{i}\right).
    \end{split}
\end{equation}
The group $G_{\msf{v}}:=\prod_{i \in Q_0} GL(V_{i})$ acts on $\Rep_{Q}(\msf{v},\msf{w})$, inducing a Hamiltonian action on $T^{*} \Rep_{Q}(\msf{v},\msf{w})$ with associated moment map
\[
    \mu: T^{*} \Rep_{Q}(\msf{v},\msf{w}) \to \Lie(G_{\msf{v}})^{*} \cong \prod_{i \in Q_0} \End(V_{i}),
\]
where the isomorphism is by the trace pairing. Let $\theta$ be a character of $G_{\msf{v}}$, which we will view as a vector $\theta \in \mathbb{Z}^{Q_0}$ with associated character
\begin{align*}
    \chi_{\theta}: G_{\msf{v}} & \to \mathbb{C}^{\times},                            \\
    (g_{i})_{i \in Q_0}        & \mapsto \prod_{i \in Q_0} \det(g_{i})^{\theta_{i}}.
\end{align*}

\begin{definition}[\cite{nakajima1994instantons}]\label{nqv}
    The Nakajima quiver variety associated to the data $Q,\msf{v},\msf{w},$ and $\theta$ is the algebraic symplectic reduction
    \[
        \mc{M}_{Q,\theta}(\msf{v}, \msf{w}) :=T^{*}\Rep_{Q}(\msf{v},\msf{w})\,/\!\!/\!\!/\!\!/_{\!\theta}\, G_{\msf{v}}=\mu^{-1}(0)\,/\!\!/_{\!\theta}\,G_{\msf{v}}.
    \]
\end{definition}
The notation $\mu^{-1}(0)\,/\!\!/_{\!\theta}\,G_{\msf{v}}$ refers to the GIT quotient of $\mu^{-1}(0)$ by $G_{\msf{v}}$ with respect to the stability parameter $\theta$.

In this paper, we will only consider the positive stability parameter $\theta=(1,1,\dots,1)$. So from now on, we will simplify notation and write $\mc{M}_{Q}(\msf{v}, \msf{w})$ with this choice of stability understood.

\subsection{More explicit description}
We will now make Definition~\ref{nqv} more explicit.

Points of $\mc{M}_{Q}(\msf{v}, \msf{w})$ are represented by elements of $T^*\Rep_{Q}(\msf{v},\msf{w})$. Using \eqref{T*rep}, we will denote elements of this latter space by a tuple
\[
    \left(\left(X_e\right)_{e \in Q_1},\left(Y_e\right)_{e \in Q_1},\left(A_i\right)_{i \in Q_0},\left(B_i\right)_{i \in Q_0}\right),
\]
where $X_e \in \Hom(V_{o(e)}, V_{i(e)})$, $Y_e \in \Hom(V_{i(e)}, V_{o(e)})$, $A_i \in \Hom(W_i, V_i)$, and $B_i \in \Hom(V_i, W_i)$. We will sometimes abuse notation and denote such a tuple simply by $(X,Y,A,B)$.
In this notation, the moment map is given explicitly by
\begin{equation}\label{eq:moment-map}
    \begin{split}
         & \mu\left(\left(\left(X_e\right)_{e \in Q_1},\left(Y_e\right)_{e \in Q_1},\left(A_i\right)_{i \in Q_0},\left(B_i\right)_{i \in Q_0}\right)\right) \\
         & = \left(\sum_{\substack{e \in Q_1                                                                                                                \\ i(e)=i}} X_e Y_e-\sum_{\substack{e \in Q_1 \\ o(e)=i}} Y_e X_e+A_i B_i\right)_{i \in Q_0}.
    \end{split}
\end{equation}
For generic choices of $\theta$, including the positive stability, the notions of $\theta$-stability and $\theta$-semistability from geometric invariant theory coincide; see Section~3 of \cite{ginzburg2009lectures}. Furthermore, it is known that $G_{\msf{v}}$ acts freely on the set of stable points. Thus $\mc{M}_{Q}(\msf{v}, \msf{w})$ is a smooth quasiprojective variety which as a set, is equal to
\[
    \mu^{-1}(0)^{s}/G_{\msf{v}},
\]
where $\mu^{-1}(0)^{s}$ denotes the set of stable points in $\mu^{-1}(0)$. The set of stable points in $T^*\Rep_{Q}(\msf{v},\msf{w})$ has the following explicit description.

\begin{proposition}[Proposition~5.1.5 of \cite{ginzburg2009lectures}]\label{stability}
    A point $(X, Y, A, B) \in T^{*}\Rep_{Q}(\msf{v},\msf{w})$ is stable, if for any collection $\left(S_i\right)_{i \in Q_0}$, the conditions
    \begin{itemize}
        \item $S_i$ is a subspace of $V_i$,
        \item $\Im(A_i) \subset S_i$,
        \item for all $i \taking{e} j \in Q_1$, we have $X_e(S_i) \subset S_j$ and $Y_e(S_j) \subset S_i$,
    \end{itemize}
    imply that $S_i = V_i$.
\end{proposition}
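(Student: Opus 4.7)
The plan is to invoke the Hilbert--Mumford criterion from geometric invariant theory applied to the $G_{\msf{v}}$-action on $T^{*}\Rep_{Q}(\msf{v}, \msf{w})$ linearized by the character $\chi_{\theta}$. A one-parameter subgroup $\lambda : \mathbb{C}^{\times} \to G_{\msf{v}}$ is the same data as a choice of $\mathbb{Z}$-grading $V_{i} = \bigoplus_{k \in \mathbb{Z}} V_{i}(k)$ on each vertex space, and for $\theta = (1, \ldots, 1)$ a direct calculation gives the Hilbert--Mumford pairing
\[
    \langle \theta, \lambda \rangle = \sum_{i \in Q_{0}} \sum_{k \in \mathbb{Z}} k \cdot \dim V_{i}(k).
\]
The task is then to show that this pairing is strictly positive for every nontrivial $\lambda$ such that $\lim_{t \to 0} \lambda(t) \cdot (X, Y, A, B)$ exists in $T^{*}\Rep_{Q}(\msf{v}, \msf{w})$.

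The first step I would carry out is to translate the existence of this limit into filtration-compatibility conditions between the grading and the maps. Writing each $X_{e}$ in block form with respect to the gradings on $V_{o(e)}$ and $V_{i(e)}$ and tracking the resulting powers of $t$ (using that $W_{i}$ carries the trivial $G_{\msf{v}}$-action), the conditions should reduce to: $X_{e}$ and $Y_{e}$ preserve the increasing filtrations $F_{\geq k} V_{i} := \bigoplus_{j \geq k} V_{i}(j)$; the image $\Im(A_{i})$ lies in $F_{\geq 0} V_{i}$; and $B_{i}$ vanishes on $F_{>0} V_{i}$.

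The main argument is then short. Given any nontrivial $\lambda$ for which the limit exists, I would set $S_{i} := F_{\geq 0} V_{i}$. The filtration conditions above immediately imply that $(S_{i})_{i \in Q_{0}}$ satisfies each of the three bullet points of the proposition, so the hypothesis forces $S_{i} = V_{i}$ for every $i$. This says precisely that every weight of $\lambda$ is nonnegative, and nontriviality then forces at least one weight to be strictly positive, giving $\langle \theta, \lambda \rangle > 0$ as required.

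The main obstacle I anticipate is the careful bookkeeping in the translation step: one must correctly handle the four distinct transformation laws for $X_{e}$, $Y_{e}$, $A_{i}$, and $B_{i}$ under $\lambda$, and verify that the resulting limit-existence conditions match the three bullet points of the proposition, with the $B_{i}$ condition a free consequence that does not appear in the subspace formulation. Once this translation is in place, the rest of the argument is essentially formal and the proposition follows directly from the Hilbert--Mumford criterion.
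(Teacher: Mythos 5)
The paper offers no proof of this proposition---it is imported verbatim from Proposition~5.1.5 of \cite{ginzburg2009lectures}---so there is nothing internal to compare against; your Hilbert--Mumford argument is the standard proof of the cited result and is correct. The bookkeeping you defer does work out exactly as you predict: the limit $\lim_{t \to 0}\lambda(t)\cdot(X,Y,A,B)$ exists precisely when $X_e$ and $Y_e$ preserve the filtrations $F_{\geq k}V_i$, when $\Im(A_i) \subset F_{\geq 0}V_i$, and when $B_i$ kills $F_{>0}V_i$; taking $S_i = F_{\geq 0}V_i$ then forces every weight of $\lambda$ to be nonnegative, whence $\langle\theta,\lambda\rangle > 0$ for nontrivial $\lambda$. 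Two small points to make explicit in a final write-up: the proposition asserts only the implication [subspace condition $\Rightarrow$ stable], which is exactly the direction your argument supplies, and you must pin down the sign convention in King's numerical criterion (semistability $\Leftrightarrow$ $\langle\chi_\theta,\lambda\rangle \geq 0$ for every one-parameter subgroup whose limit exists), since the opposite convention would instead yield the dual criterion involving subspaces contained in $\ker B_i$.
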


\subsection{Torus action and fixed points}

The action of $G_{\msf{w}}:=\prod_{i \in Q_0} GL(W_{i})$ on $T^{*}\Rep_{Q}(\msf{v},\msf{w})$ descends to an action on $\mc{M}_{Q}(\msf{v}, \msf{w})$. In addition, the dilation of cotangent fibres also descends to an action of a torus which we will write as $\mathbb{C}^{\times}_{\hbar}$. Choose a decomposition $W_{i} \cong W_{i}' \oplus W_{i}''$ for each $ i \in Q_0$. Let $\msf{w}=\msf{w}'+\msf{w}''$ be the corresponding decomposition of the framing vector. Let $\msf{A}\subset G_{\msf{w}}$ be the rank $1$ torus which acts on each $W_{i}'$ with weight $1$ and on each $W_{i}''$ with weight $0$. Then
\begin{equation}\label{tens}
    \mc{M}_{Q}(\msf{v}, \msf{w})^{\msf{A}}=\bigsqcup_{\substack{\msf{v}',\msf{v}'' \in \mathbb{N}^{Q_0} \\ \msf{v}'+\msf{v}''=\msf{v}}}\mc{M}_{Q}(\msf{v}', \msf{w}')\times \mc{M}_{Q}(\msf{v}'', \msf{w}'').
\end{equation}
This is commonly referred to as the tensor product property of Nakajima varieties \cite{nakajima2001quivertensor}.
Iterating the tensor product property, one sees that fixed points of a maximal torus of $G_{\msf{w}}$ are given by products of Nakajima varieties with one dimensional framing. For $i \in Q_0$, let $\delta_{i}\in \NN^{Q_0}$ be the framing vector with $1$ in position $i$ and $0$ elsewhere.

\begin{proposition}[\cite{nakajima2004quiver}]\label{minframing}
    If $Q$ is an ADE quiver, then \[ \bigsqcup_{\msf{v} \in \NN^{Q_0}} \mc{M}_{Q}(\msf{v},\delta_{i}) \] consists of isolated points if and only if $i$ is a minuscule vertex.
\end{proposition}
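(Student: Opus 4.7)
The plan is to reduce the statement to a standard representation-theoretic fact using two results of Nakajima. The first is the non-emptiness criterion: $\mc{M}_Q(\msf{v}, \delta_i)$ is non-empty if and only if $\mu := \omega_i - \sum_j \msf{v}_j \alpha_j$ is a weight of the irreducible representation $V(\omega_i)$ of the simple Lie algebra attached to $Q$. The second is the dimension formula
\[
    \dim \mc{M}_Q(\msf{v}, \delta_i) \;=\; 2\msf{v}_i \;-\; (\msf{v}, C\msf{v}),
\]
where $C$ is the Cartan matrix of the underlying Dynkin diagram. A short computation, using $(\alpha_j, \omega_k) = \delta_{jk}$ and $(\alpha_j,\alpha_k)=C_{jk}$, rewrites this as $(\omega_i, \omega_i) - (\mu, \mu)$ with respect to the Weyl-invariant form on the weight lattice. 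In addition, for the stability parameter $\theta=(1,\ldots,1)$ the variety is smooth, so a zero-dimensional $\mc{M}_Q(\msf{v}, \delta_i)$ is automatically a disjoint union of reduced points.

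The heart of the argument is the claim that for any weight $\mu$ of $V(\omega_i)$, one has $(\mu,\mu) \leq (\omega_i,\omega_i)$, with equality if and only if $\mu$ lies in the Weyl-group orbit of $\omega_i$. To prove this, replace $\mu$ by the dominant representative of its orbit and write $\omega_i - \mu = \sum_j n_j \alpha_j$ with $n_j \in \mathbb{Z}_{\geq 0}$ and $\mu = \sum_k c_k \omega_k$ with $c_k \in \mathbb{Z}_{\geq 0}$. Then
\[
    (\omega_i, \omega_i) - (\mu, \mu) \;=\; (\omega_i - \mu,\, \omega_i + \mu) \;=\; \sum_j n_j\bigl(\delta_{ji} + c_j\bigr),
\]
a sum of manifestly non-negative terms. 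Equality forces $n_j(\delta_{ji}+c_j)=0$ for every $j$, so setting $S:=\{j:n_j>0\}$ one obtains $i \notin S$ and $c_j=0$ for every $j \in S$. Pairing the identity $\omega_i = \mu + \sum_j n_j \alpha_j$ with $\alpha_\ell$ for $\ell \in S$ collapses to $(C|_S)\,n|_S = 0$, where $C|_S$ is the principal submatrix of $C$ indexed by $S$. Since $S$ is a proper subset of $Q_0$ (it omits $i$) and the ADE Cartan matrix $C$ is positive definite, $C|_S$ is positive definite too; this forces $n|_S = 0$, hence $S = \emptyset$ and $\mu = \omega_i$.

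Combining these inputs proves both directions. If $\omega_i$ is minuscule, every weight of $V(\omega_i)$ lies in the Weyl orbit of $\omega_i$ and saturates the bound $(\mu,\mu)=(\omega_i,\omega_i)$, so every non-empty $\mc{M}_Q(\msf{v}, \delta_i)$ has dimension zero and is therefore a finite disjoint union of isolated points. Conversely, if $\omega_i$ is not minuscule, there exists a weight of $V(\omega_i)$ outside the Weyl orbit of $\omega_i$; taking its dominant representative $\mu^+$ and setting $\msf{v}_j$ to be the coefficient of $\alpha_j$ in $\omega_i - \mu^+$ gives by Nakajima's criterion a non-empty $\mc{M}_Q(\msf{v}, \delta_i)$ with strictly positive dimension.

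The main obstacle, modulo standard references, is invoking Nakajima's results in precisely the forms needed---the non-emptiness criterion, the dimension formula, and smoothness for our choice of stability parameter---all of which are already available in \cite{nakajima1994instantons, nakajima1998quiverkacmoody, nakajima2004quiver}. Once these are in hand, everything reduces to the elementary lemma on weight lengths above, whose proof uses only positive-definiteness of the ADE Cartan matrix.
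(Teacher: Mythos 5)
Your proposal is correct. Note that the paper does not prove this proposition at all --- it is stated as a citation to Nakajima's work --- so there is no in-paper argument to compare against; what you have written is a legitimate self-contained reconstruction of what lies behind that citation. The three external inputs you invoke (non-emptiness of $\mc{M}_Q(\msf{v},\msf{w})$ iff $\sum_j \msf{w}_j\omega_j - \sum_j \msf{v}_j\alpha_j$ is a weight of the corresponding irreducible representation, the dimension formula, and smoothness for generic stability) are exactly the ones the paper itself relies on elsewhere (e.g.\ in Lemma 3.3 and Proposition 3.4), and your identity $\dim \mc{M}_Q(\msf{v},\delta_i) = (\omega_i,\omega_i)-(\mu,\mu)$ checks out in the simply-laced normalization $(\alpha_j,\alpha_j)=2$, where $(\omega_i,\alpha_j)=\delta_{ij}$ and $(\alpha_j,\alpha_k)=C_{jk}$. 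The key lemma --- that $(\mu,\mu)\le(\omega_i,\omega_i)$ for every weight $\mu$ of $V(\omega_i)$ with equality exactly on the Weyl orbit of $\omega_i$ --- is proved cleanly: reducing to dominant $\mu$, expanding $(\omega_i-\mu,\omega_i+\mu)=\sum_j n_j(\delta_{ji}+c_j)$, and using positive definiteness of the principal submatrix $C|_S$ to force $S=\emptyset$ in the equality case is exactly the standard argument, and the converse direction (a non-minuscule $\omega_i$ admits a dominant weight $\mu^+\neq\omega_i$, whose root-coordinates give a non-empty positive-dimensional $\mc{M}_Q(\msf{v},\delta_i)$) is handled correctly. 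The only cosmetic remark is that positive definiteness of $C|_S$ needs no properness of $S$ --- every principal submatrix of a positive definite matrix is positive definite --- but this does not affect the argument.
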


\subsection{Polarization}

The vector spaces $V_{i}$ descend to vector bundles $\tb_{i}$ on $\mc{M}$ which are called the tautological vector bundles. Similarly, the spaces $W_{i}$ descend to topologically trivial bundles $\mathscr{W}_{i}$. These bundles carry $\msf{T}$-equivariant structures. Furthermore, the $K$-theory class of the tangent bundle of $\mc{M}$ decomposes as
\[
    T \mc{M}= \pol + \hbar^{-1} (\pol)^{\vee} \in K_{\msf{T}}(\mc{M}),
\]
where
\begin{equation}\label{polarization}
    \pol= \sum_{e \in Q_{1}} \Hom\left(\tb_{o(e)},\tb_{i(e)}\right)+\sum_{i \in Q_{0}} \Hom\left(\mathscr{W}_{i},\tb_{i}\right)-\sum_{i \in Q_{0}} \Hom\left(\tb_{i},\tb_{i}\right).
\end{equation}

\section{Macdonald polynomials}\label{sec:macdonald}
In this section, we review some properties of Macdonald polynomials that we will need later on. The reader is encouraged to skip this section and return to it only as a reference for results used in Sections \ref{sec:thm-proof1} and \ref{sec:thm-proof2}. The following is standard and can be found in \cite{macdonald1995symmetric, gasper1995lecturenotesqseries}. The standard notation in the literature is related to ours by $\hbar=t$.

\subsection{Symmetric functions}
Let $\mathcal{F} = \mathbb{C}(q,\hbar) \otimes_\mathbb{C} \mathbb{C}[p_1, p_2, \dots]$ be the ring of symmetric functions in infinitely many variables $x = (x_1, x_2, \dots)$, where $p_i$ denotes the $i$-th power-sum symmetric function. For a partition $\lambda = (\lambda_1, \lambda_2, \dots)$, we define $p_\lambda = p_{\lambda_1} p_{\lambda_2} \cdots$.
The Macdonald inner product on $\mathcal{F}$ is given by
\begin{equation}\label{eq:mac-inner}
    \langle p_\lambda, p_\mu \rangle = \delta_{\lambda\mu} \prod_{n \geq 1} n^{m_n(\lambda)} m_n(\lambda)! \prod_{i=1}^{l(\lambda)} \frac{1 - q^{\lambda_i}}{1 - \hbar^{\lambda_i}}.
\end{equation}
where $m_n(\lambda)$ is the multiplicity of $n$ in $\lambda$ and $l(\lambda)$ is the number of nonzero parts of $\lambda$.

\subsection{Macdonald polynomials}

Macdonald $P$-polynomials $P_\lambda(x; q, \hbar)$ are uniquely determined by two properties. First, they expand upper-triangularly in the monomial basis $m_\mu$ as
\[
    P_\lambda = \sum_{\mu \trianglelefteq \lambda} u_{\lambda\mu} m_\mu, \quad u_{\lambda\mu} \in \mathbb{Q}(q,\hbar),
\]
where $u_{\lambda\lambda} = 1$ and $\trianglelefteq$ denotes the dominance order on partitions. Second, they are orthogonal with respect to the Macdonald inner product, meaning that $\langle P_\lambda, P_\mu \rangle = 0$ for $\lambda \neq \mu$.
The dual basis $Q_\lambda(x; q, \hbar)$ is uniquely defined by the $(q,\hbar)$-Hall pairing $\langle P_\lambda, Q_\mu \rangle = \delta_{\lambda\mu}$. Specifically, $Q_\lambda = b_\lambda^{-1} P_\lambda$, where
\begin{equation}\label{macnorm}
    b_\lambda = \prod_{\Box \in \lambda} \frac{1 - q^{a(\Box)} \hbar^{l(\Box)+1}}{1 - q^{a(\Box)+1} \hbar^{l(\Box)}},
\end{equation}
and $a(\Box)$ and $l(\Box)$ are the arm and leg lengths of the box $\Box$ in the Young diagram of $\lambda$, defined by
\[
    a(\Box) = \lambda_i - j,
\]
\[
    l(\Box) = \lambda'_j - i.
\]

Let $x$ and $y$ be two independent variable sets. The Macdonald polynomials satisfy the Cauchy identity, which reads
\begin{equation}\label{eq:cauchy}
    \sum_{\lambda} P_\lambda(\bs x) Q_\lambda(\bs y) = \prod_{i,j} \frac{(\hbar x_i y_j;q)_\infty}{(x_i y_j;q)_\infty}=:\Pi(\bs x, \bs y),
\end{equation}
where the product runs over all pairs of indices $i, j$.

\subsection{Skew-Macdonald polynomials}

The skew-Macdonald polynomials $P_{\lambda/\mu}$ and $Q_{\lambda/\mu}$ extend the $P$ and $Q$-polynomials to skew shapes $\lambda/\mu$. They are characterized by
\[
    \langle P_{\lambda/\mu}, Q_\nu \rangle = \langle P_\lambda, Q_\mu Q_\nu \rangle,
\]
\[
    \langle Q_{\lambda/\mu}, P_\nu \rangle = \langle Q_\lambda, P_\mu P_\nu \rangle,
\]
for all partitions $\nu$. If $\mu \not\trianglelefteq \lambda$, then $P_{\lambda/\mu} = Q_{\lambda/\mu} = 0$. The even-leg weight is defined as
\[
    b_\lambda^{\mathrm{el}} = \prod_{\substack{\Box \in \lambda \\ l(\Box) \text{ even}}} b_\lambda(\Box).
\]

\subsection{Pieri rule}
Let $\varphi_{\lambda/\mu}$ and $\psi_{\lambda/\mu}$ be defined by
\begin{align}
    \nonumber \varphi_{\lambda/\mu} & = \prod_{s \in C_{\lambda/\mu}} \frac{b_\lambda(s)}{b_\mu(s)}, \\
    \psi_{\lambda/\mu} & = \prod_{s \in R_{\lambda/\mu} - C_{\lambda/\mu}} \frac{b_\mu(s)}{b_\lambda(s)}, \label{phipsi}
\end{align}
where $C_{\lambda/\mu}$ and $R_{\lambda/\mu}$ denote the sets of columns and rows of $\lambda$ that contain boxes in the skew shape $\lambda/\mu$, respectively.

The Pieri rule for Macdonald polynomials states that
\begin{equation}\label{pieri}
    P_\mu Q_{(a)} = \sum_{\lambda} \varphi_{\lambda/\mu} P_\lambda,
\end{equation}
where the sum is over partitions $\lambda$ such that $\lambda \succ \mu$ and $|\lambda| = |\mu| + a$. Here, $\varphi_{\lambda/\mu}$ are coefficients determined by the skew shape $\lambda/\mu$, and the notation $\lambda \succ \mu$ means that $\lambda$ interlaces $\mu$ from above, meaning that
\[
    \lambda_1 \geq \mu_1 \geq \lambda_2 \geq \mu_2 \geq \dots.
\]

\subsection{One variable specialization}
For a single variable \(x\) and a single-part partition \((r)\), the specialization of Macdonald polynomials is given by
\begin{equation}\label{onevar}
    P_{(r)}(x; q, \hbar) = x^r,\quad
    Q_{(r)}(x; q, \hbar) = \frac{(\hbar)_r}{(q)_r} x^r,
\end{equation}
where $(a)_r = \prod_{k=0}^{r-1} (1 - a q^k)$. Moreover, $P$ and $Q$ vanish when the number of variables is less than the length of the partition:
\[
    P_\lambda(x_1, \dots, x_n; q, \hbar) = 0 \quad \text{if} \quad n < l(\lambda),
\]
\[
    Q_\lambda(x_1, \dots, x_n; q, \hbar) = 0 \quad \text{if} \quad n < l(\lambda).
\]

\subsection{Even part function}
The even part function $\mathcal{E}_\lambda(\bs x)$ is defined (see for example \cite[VI.7, Ex.~4(i)]{macdonald1995symmetric}) by
\begin{equation}\label{evenpart}
    \mathcal{E}_\lambda(\bs x) = \Phi(\bs x)^{-1} \sum_{\substack{\mu \text{ even} \\ \lambda \prec \mu}} b_\mu^{\mathrm{el}} P_{\lambda/\mu}(\bs x),
\end{equation}
where
\[
    \Phi(\bs x):=\sum_{\nu' \text{ even}} b_\nu^{\text{el}} P_\nu(\bs x).
\]
When specialized to a single variable $\bs x = (x_1)$,
\begin{equation}\label{evenpart-single}
    \mathcal{E}_\lambda(\bs x) \bigg|_{\bs x=(x_1)} = b_{e(\lambda)}^{\mathrm{el}} \psi_{e(\lambda)/\lambda} x_1^{|e(\lambda)| - |\lambda|},
\end{equation}
where $e(\lambda)$ is the unique partition such that $e(\lambda)'$ is even and $\lambda \prec e(\lambda)$.

\subsection{Inversion and branching}
The inversion symmetry states that
\begin{equation}\label{macinv}
    P_{(a)}(x_1^{-1}, \dots, x_n^{-1}; q, \hbar) = (x_1 \cdots x_n)^{-a} P_{(a^{n-1})}(x_1, \dots, x_n; q, \hbar).
\end{equation}
Because we are unable to find a reference for this formula, we include a short proof of it below.
\begin{proof}
    For the partition $(a)$, the Macdonald polynomial $P_{(a)}(x_1, \dots, x_n; q, \hbar)$ is given by
    \[
        P_{(a)}(x_1, \dots, x_n; q, \hbar) = \sum_{i=1}^n x_i^a \prod_{\substack{j=1 \\ j \neq i}}^n \frac{x_i - \hbar x_j}{x_i - x_j},
    \]
    and for the partition $(a^{n-1})$ the Macdonald polynomial is given by
    \[
        P_{(a^{n-1})}(x_1, \dots, x_n; q, \hbar) = \sum_{i=1}^n x_1^a \cdots x_{i-1}^a x_{i+1}^a \cdots x_n^a \prod_{\substack{j=1 \\ j \neq i}}^n \frac{x_j - \hbar  x_i}{x_j - x_i},
    \]
    both of which can be derived from the relationship between the symmetric and nonsymmetric Macdonald polynomials, see e.g. Section 2.1 of \cite{colmenarejo2024cfunctions}.
   
    So we have
    \[
        P_{(a)}(x_1^{-1}, \dots, x_n^{-1}; q, \hbar) = \sum_{i=1}^n x_i^{-a} \prod_{\substack{j=1 \\ j \neq i}}^n \frac{x_i^{-1} - \hbar x_j^{-1}}{x_i^{-1} - x_j^{-1}}.
    \]
    Note that
    \[
        \frac{x_i^{-1} - \hbar x_j^{-1}}{x_i^{-1} - x_j^{-1}}  = \frac{x_j - \hbar x_i}{x_j-x_i},
    \]
    so we get
    \begin{align*}
       & P_{(a)}(x_1^{-1}, \dots, x_n^{-1}; q, \hbar)  = \sum_{i=1}^n x_i^{-a} \prod_{\substack{j=1 \\ j \neq i}}^n \frac{x_j - \hbar x_i}{x_j - x_i} \\
        &= (x_1 x_2\ldots x_n)^{-a} \sum_{i=1}^n x_1^{a}\ldots x_{i-1}^{a} x_{i+1}^{a} \cdots x_n^{a} \prod_{\substack{j=1 \\ j \neq i}}^n \frac{x_j - \hbar x_i}{x_j - x_i} \\
        &= P_{(a^{n-1})}(x_1,\ldots,x_n; q,\hbar)
    \end{align*}
    as desired.
\end{proof}

The relation between skew-Macdonald $P$ and $Q$~polynomials is given by
\[
    Q_{\lambda/\mu} = b_\lambda b_\mu^{-1} P_{\lambda/\mu}.
\]
The branching property allows us to express Macdonald polynomials evaluated on a union of variables as a sum of products of skew polynomials:
\begin{align} \nonumber
    Q_\lambda(\bs x, \bs z) = \sum_\mu Q_{\lambda/\mu}(\bs x) Q_\mu(\bs z), \\
    P_\lambda(\bs x, \bs z) = \sum_\mu P_{\lambda/\mu}(\bs x) P_\mu(\bs z). \label{branching}
\end{align}
For single-variable specializations and fixed parameters, the polynomials behave as monomials scaled by appropriate coefficients:
\[
    P_{(r)}(x; q, \hbar) = x^r, \quad Q_{(r)}(x; q, \hbar) = \frac{(\hbar)_r}{(q)_r} x^r.
\]
The $q$-shifted factorial for negative indices is related to positive indices by
\begin{equation}\label{qpochinv}
    (a; q)_{-n} = \frac{(-q/a)^n q^{\binom{n}{2}}}{(q/a; q)_n}.
\end{equation}

\section{Stable type \texorpdfstring{$D$}{D} quiver representations}\label{sec:stable-quiver-rep}

In this section, we study zero-dimensional type $D$ quiver varieties. In other words, we explicitly characterize all stable type $D$ quiver representations satisfying the moment map equations when there is a single framing at a minuscule vertex. Recall the definition of the $D_n$ quiver:

\begin{definition}
    The quiver $D_n = (Q_0, Q_1)$ is defined as follows:
    \begin{itemize}
        \item $Q_0 = \{1, 2, \dots, n\}$,
        \item $i \to i+1 \in Q_1$ for all $i \in [1, n-2]$,
        \item $n-2 \to n \in Q_1$.
    \end{itemize}
\end{definition}

\begin{figure}[H]
    \centering
    \[\begin{tikzcd}
            &&& \bullet \\
            \bullet & \bullet & \bullet \\
            &&& \bullet
            \arrow[from=2-1, to=2-2]
            \arrow[from=2-2, to=2-3]
            \arrow[from=2-3, to=1-4]
            \arrow[from=2-3, to=3-4]
        \end{tikzcd}\]
    \caption{Dynkin diagram for $D_5$}
    \label{fig:d5-diagram}
\end{figure}
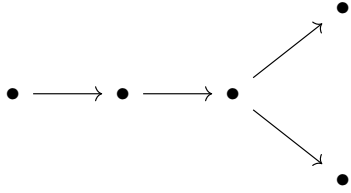

\subsection{Case of \texorpdfstring{$\msf w = (0^{n-1}, 1)$}{w = (0^{n-1}, 1)}}\label{Dposet1}

We start with the case of a single framing at a spin node, which in our notation is either vertex $n-1$ or vertex $n$. Due to the symmetry of the $D_{n}$ quiver, it does not matter which we consider. So for definiteness, assume that $\msf{w}=\delta_{n}$.

Let $\mc{M}= \mc{M}_{D_n}(\msf v, \delta_{n})$ be a quiver variety. Recall from Proposition~\ref{minframing} that $\mc{M}$ is nonempty if and only if it is a single point. By \cite{nakajima1994instantons}, such $\msf{v}$ correspond to weight spaces of the spin representation with highest weight $\omega_{n}$. We require an explicit description of the $\msf{v}$ for which $\mc{M}$ is a point, along with a special representative of the point as a quiver representation.

We recall the formula for the dimension of an arbitrary quiver variety.

\begin{proposition}[Dimension of {$\mc{M}_Q(\msf{v}, \msf{w})$}, \cite{nakajima1994instantons}]\label{prop:mq-dim}
    The dimension of the space $\mc{M}_Q(\msf v, \msf w)$ is given as follows:
    \[
        \frac{1}{2} \dim_\CC \mc M_Q(\msf v, \msf w) = \sum_{e \in Q_1} \msf v_{o(e)} \msf v_{i(e)} + \sum_{i \in Q_0} \msf v_i \msf w_i - \sum_{i \in Q_0} \msf v_i^2.
    \]
\end{proposition}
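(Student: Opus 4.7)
The plan is to compute $\dim_{\CC} \mc{M}_Q(\msf v, \msf w)$ directly from its construction as a Hamiltonian reduction, proceeding in three steps: compute the dimension of the ambient space, cut by the moment map condition, and then take the free quotient.

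First, I would compute $\dim_{\CC} T^*\Rep_Q(\msf v, \msf w)$ using the explicit decomposition in equation~\eqref{T*rep}. Each summand $\Hom(V_{o(e)}, V_{i(e)})$ contributes $\msf v_{o(e)} \msf v_{i(e)}$ and each summand $\Hom(W_i, V_i)$ contributes $\msf v_i \msf w_i$ to the complex dimension of $\Rep_Q(\msf v, \msf w)$. Taking the cotangent bundle doubles this, so
\[
    \dim_{\CC} T^*\Rep_Q(\msf v, \msf w) = 2\left(\sum_{e \in Q_1} \msf v_{o(e)} \msf v_{i(e)} + \sum_{i \in Q_0} \msf v_i \msf w_i\right).
\]

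Second, I would argue that $\mu^{-1}(0) \cap T^*\Rep_Q(\msf v, \msf w)^{s}$ is a smooth subvariety of codimension $\dim G_{\msf v} = \sum_{i \in Q_0} \msf v_i^2$ in the stable locus. The tool is the standard Hamiltonian identity: at any point $x$, the image of the differential $d_x \mu : T_x T^*\Rep_Q(\msf v, \msf w) \to \Lie(G_{\msf v})^*$ has annihilator equal to $\Lie(\Stab_{G_{\msf v}}(x))$. Since (as noted in the excerpt following Proposition~\ref{stability}) $G_{\msf v}$ acts freely on stable points, all stabilizers are trivial, so $d\mu$ is surjective along $\mu^{-1}(0)^{s}$, and thus
\[
    \dim_{\CC}\mu^{-1}(0)^{s} = 2\left(\sum_{e \in Q_1} \msf v_{o(e)} \msf v_{i(e)} + \sum_{i \in Q_0} \msf v_i \msf w_i\right) - \sum_{i \in Q_0} \msf v_i^2.
\]

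Finally, since $G_{\msf v}$ acts freely on $\mu^{-1}(0)^{s}$ with $\mc{M}_Q(\msf v, \msf w) = \mu^{-1}(0)^{s}/G_{\msf v}$ a geometric quotient, the dimension drops by another $\dim G_{\msf v} = \sum_{i \in Q_0} \msf v_i^2$. Combining and dividing by $2$ yields the claimed formula. The only genuine obstacle is justifying surjectivity of $d\mu$ on the stable locus, but this is a direct consequence of the freeness of the $G_{\msf v}$-action on stable points together with the general moment map duality; no explicit computation with the formula~\eqref{eq:moment-map} is required. A minor additional check is that $\mu^{-1}(0)^{s}$ is nonempty, which is an implicit hypothesis whenever one speaks of the dimension of $\mc{M}_Q(\msf v, \msf w)$.
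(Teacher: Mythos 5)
The paper does not give its own proof of this proposition; it is quoted directly from \cite{nakajima1994instantons}. Your argument is the standard dimension count for a Hamiltonian reduction and it is correct: the ambient space $T^*\Rep_Q(\msf v,\msf w)$ has dimension $2\bigl(\sum_{e}\msf v_{o(e)}\msf v_{i(e)}+\sum_i \msf v_i\msf w_i\bigr)$, surjectivity of $d\mu$ along the stable locus (equivalent to triviality of stabilizers there, which holds for the generic stability parameter $\theta=(1,\dots,1)$ used throughout the paper) cuts this by $\dim G_{\msf v}=\sum_i\msf v_i^2$, and the free geometric quotient removes another $\sum_i\msf v_i^2$. Your closing caveat is the right one: the formula is only meaningful when $\mu^{-1}(0)^{s}\neq\emptyset$, which is exactly how the paper uses it (e.g.\ in Lemmas~\ref{lem:char-valid-v:w-last} and~\ref{lem:framing-first-configs}, where nonemptiness is supplied separately by the representation-theoretic count of weight spaces).
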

In the case of $\msf{w}=\delta_{n}$, the dimension formula is
\[
    \dim_{\CC} \mc{M} = 2 \left(\sum_{e \in Q_1} \msf v_{o(e)} \msf v_{i(e)} + \msf v_n - \sum_{i \in Q_0} \msf v_i^2\right).
\]
The possible $\msf{v}$ for which $\mc{M}$ is nonempty are explicitly described by the following lemma.

\begin{lemma}\label{lem:char-valid-v:w-last}
    The quiver variety $\mc{M}_{D_n}(\msf v, \delta_{n})$ is nonempty if and only if
    \begin{itemize}
        \item $\msf v_{i+1} = \msf v_i$ or $\msf v_{i+1} = \msf v_i + 1$ for all $i \in [1, n-3]$,
        \item $\msf v_{n-2} = \msf v_{n-1} + \msf v_n$ or $\msf v_{n-2} + 1 = \msf v_{n-1} + \msf v_n$,
        \item $\msf v_n = \msf v_{n-1}$ or $\msf v_n = \msf v_{n-1} + 1$.
    \end{itemize}
    There are exactly $2^{n-1}$ such $\msf{v}$.
\end{lemma}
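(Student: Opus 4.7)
The plan is to identify the valid $\msf v$ with the weights of the half-spin representation of $\mathfrak{so}(2n)$. By a theorem of Nakajima \cite{nakajima1994instantons}, for a finite ADE quiver the variety $\mc M_Q(\msf v, \msf w)$ is nonempty if and only if the weight $\lambda - \sum_i \msf v_i \alpha_i$, where $\lambda = \sum_i \msf w_i \omega_i$, appears in the irreducible integrable highest-weight representation $V(\lambda)$. Here $\lambda = \omega_n$, so $V(\lambda)$ is the half-spin representation of $\mathfrak{so}(2n)$, of dimension $2^{n-1}$, whose weights (each of multiplicity one) are precisely $\tfrac{1}{2}(\epsilon_1, \dots, \epsilon_n)$ with $\epsilon_i \in \{\pm 1\}$ and $\prod_i \epsilon_i = +1$.

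I would then expand $\omega_n - \sum_{i=1}^{n} \msf v_i \alpha_i$ in the orthonormal basis $e_1, \dots, e_n$, using $\alpha_i = e_i - e_{i+1}$ for $i < n$, $\alpha_n = e_{n-1} + e_n$, and $\omega_n = \tfrac{1}{2}(e_1 + \cdots + e_n)$. Setting this equal to $\tfrac{1}{2}(\epsilon_1, \dots, \epsilon_n)$ and reading off each coordinate yields, with the implicit convention $\msf v_0 := 0$,
\[
    \msf v_i - \msf v_{i-1} = \tfrac{1-\epsilon_i}{2} \in \{0,1\} \quad \text{for } 1 \leq i \leq n-2,
\]
\[
    \msf v_{n-1} + \msf v_n - \msf v_{n-2} = \tfrac{1-\epsilon_{n-1}}{2} \in \{0,1\},
\]
\[
    \msf v_n - \msf v_{n-1} = \tfrac{1-\epsilon_n}{2} \in \{0,1\}.
\]
These recover the three families of conditions stated in the lemma, with the $i=1$ case additionally enforcing $\msf v_1 \in \{0,1\}$, coming from the convention $\msf v_0 := 0$ and necessary for the count.

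The last step is to check that the constraint $\prod_i \epsilon_i = +1$ — equivalently, an even number of $\epsilon_i = -1$ — imposes no further restriction. Summing the left-hand sides of the four relations telescopes to $2\msf v_n$, which is always even, while the sum of the right-hand sides is exactly the number of negative $\epsilon_i$. So parity is automatic, and the assignment $(\epsilon_1, \dots, \epsilon_n) \mapsto \msf v$ is a bijection between weights of $V(\omega_n)$ and the vectors $\msf v$ satisfying the listed conditions. The count $2^{n-1} = \dim V(\omega_n)$ is then immediate.

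The main obstacle is primarily notational: one must carefully translate between the Dynkin labeling of the simple roots and the orthonormal basis, and handle the asymmetric neighborhood of the spin node in the $D_n$ diagram. Once this translation is set up, both the equivalence of conditions and the automatic verification of parity are straightforward.
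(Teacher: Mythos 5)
Your proof is correct. It rests on the same external input as the paper's proof, namely Nakajima's criterion that $\mc{M}_{D_n}(\msf v,\delta_n)$ is nonempty precisely when $\mu=\omega_n-\sum_i\msf v_i\alpha_i$ is a weight of $V(\omega_n)$, but you execute it differently: the paper argues by counting (the listed $\msf v$ are checked to have expected dimension zero via Proposition~\ref{prop:mq-dim}, there are $2^{n-1}$ of them, and Nakajima's theorem says there are exactly $2^{n-1}$ nonempty cases), whereas you directly enumerate the weights $\tfrac12(\epsilon_1,\dots,\epsilon_n)$, $\prod_i\epsilon_i=+1$, of the half-spin representation and translate them coordinate-by-coordinate into the stated conditions on $\msf v$. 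Your route buys two things. First, it is logically tighter: the paper's counting argument still needs one containment between ``listed'' and ``nonempty'' to conclude equality of the two $2^{n-1}$-element sets, and the appeal to the dimension formula for that containment is loose (expected dimension zero does not by itself give nonemptiness; in the paper this is only really settled later by the explicit representatives of Proposition~\ref{posettorep}). Your argument gives both implications at once. Second, your observation that the parity constraint $\prod_i\epsilon_i=+1$ is automatic --- because the left-hand sides of the $n$ coordinate relations telescope to $2\msf v_n$ --- is the one genuinely nontrivial verification, and you carry it out correctly. You also rightly note that the bijection forces $\msf v_1\in\{0,1\}$ (from the convention $\msf v_0=0$), a condition the lemma's bullet list omits but which is needed for the count of $2^{n-1}$ to be finite; this is a real, if minor, gap in the statement as written rather than in your argument.
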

\begin{proof}
    That such $\msf{v}$ give rise to a zero dimensional quiver variety follows from the dimension formula. It is straightforward to see that there are $2^{n-1}$ such $\msf{v}$ satisfying the conditions stated in the lemma. By \cite{nakajima1994instantons}, the number of $\msf{v}$ such that $\mc{M}_{D_n}(\msf v, \delta_{n})$ is nonempty is the dimension of the spin representation with highest weight $\omega_{n}$, which is known to be $2^{n-1}$. We have thus exhibited all such $\msf{v}$.
\end{proof}

\begin{figure}[H]
    \centering
    \begin{subfigure}[t]{0.3\textwidth}
        \centering
        \begin{tikzpicture}[
                dot/.style={circle, fill, inner sep=2pt}
            ]
            \node[dot] (v1) at (-2,2) {};
            \node[dot] (v2) at (-1,1) {};
            \node[dot] (v3) at (0,0) {};
            \node[dot] (v4) at (-1,3) {};
            \node[dot] (v5) at (0,2) {};

            \draw[->] (v3) -- (v2);
            \draw[->] (v2) -- (v1);
            \draw[->] (v5) -- (v2);
            \draw[->] (v1) -- (v4);
            \draw[->] (v5) -- (v4);

            \node[above=0.1cm of v1] {$d_{1,1}$};
            \node[above=0.1cm of v2] {$d_{2,1}$};
            \node[above=0.1cm of v3] {$d_{4,1}$};
            \node[above=0.1cm of v4] {$d_{2,2}$};
            \node[above=0.1cm of v5] {$d_{3,1}$};
        \end{tikzpicture}
    \end{subfigure}%
    \begin{subfigure}[t]{0.3\textwidth}
        \centering
        \begin{tikzpicture}[
                dot/.style={circle, fill, inner sep=2pt}
            ]
            \node[dot] (v1) at (-2,2) {};
            \node[dot] (v2) at (-1,1) {};
            \node[dot] (v3) at (0,0) {};
            \node[dot] (v4) at (-1,3) {};
            \node[dot] (v5) at (0,2) {};

            \draw[->] (v3) -- (v2);
            \draw[->] (v2) -- (v1);
            \draw[->] (v5) -- (v2);
            \draw[->] (v1) -- (v4);
            \draw[->] (v5) -- (v4);

            \node[above=0.1cm of v1] {$z_1$};
            \node[above=0.1cm of v2] {$z_2$};
            \node[above=0.1cm of v3] {$z_4$};
            \node[above=0.1cm of v4] {$z_2$};
            \node[above=0.1cm of v5] {$z_3$};
        \end{tikzpicture}
    \end{subfigure}%
    \begin{subfigure}[t]{0.3\textwidth}
        \centering
        \begin{tikzpicture}[
                dot/.style={circle, fill, inner sep=2pt}
            ]
            \node[dot] (v1) at (-2,2) {};
            \node[dot] (v2) at (-1,1) {};
            \node[dot] (v3) at (0,0) {};
            \node[dot] (v4) at (-1,3) {};
            \node[dot] (v5) at (0,2) {};

            \draw[->] (v3) -- (v2);
            \draw[->] (v2) -- (v1);
            \draw[->] (v5) -- (v2);
            \draw[->] (v1) -- (v4);
            \draw[->] (v5) -- (v4);

            \node[above=0.1cm of v1] {$\hbar^2$};
            \node[above=0.1cm of v2] {$\hbar$};
            \node[above=0.1cm of v3] {$1$};
            \node[above=0.1cm of v4] {$\hbar^2$};
            \node[above=0.1cm of v5] {$\hbar$};
        \end{tikzpicture}
    \end{subfigure}
    \caption{A $D_4$ poset of type $\omega_4$, with $\msf v = (1, 2, 1, 1)$}
    \label{fig:d4-poset-omega4}
\end{figure}
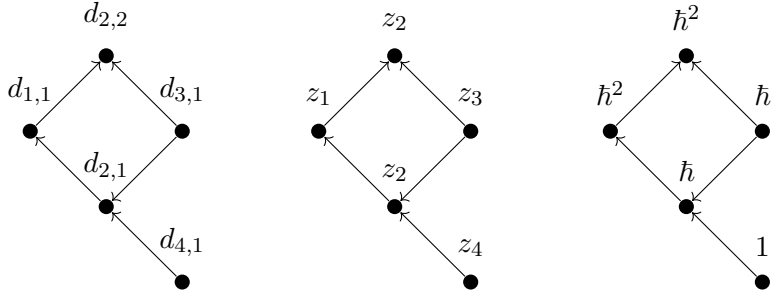

We now exhibit explicit quiver representations which are representatives of the unique point in $\mc{M}_{D_{n}}(\msf{v},\delta_{n})$ for the $\msf{v}$ described above. We will build such quiver representations from posets.

\begin{definition}\label{dspinposet}
    The type $D_n$ spin poset $\mathcal{P} = \left(P, \leq\right)$ is defined by
    \[
        P = \{d_{i,j}\}_{(i,j) \in I},
    \]
    where $I = \{(i,j) : i \in [1, n-2], j \in [1, i]\}\,\cup\,\{(n-1,j) : j \in [1, \floor{\frac{n+1}{2}}]\}\,\cup\,\{(n, j) : j \in [1, \floor{\frac{n+2}{2}}]\}$.
    The partial order $\leq$ is defined by the following relations:
    \[
        d_{i,j} \geq d_{i+1,j} \quad \text{for all } i \in [1, n-3], \ j \in [1, i],
    \]
    \[
        d_{i,j} \leq d_{i+1,j+1} \quad \text{for all } i \in [1, n-3], \ j \in [1, i],
    \]
    \[
        d_{n-2,2j} \geq d_{n-1,j}, \quad  d_{n-2,2j-1} \leq d_{n-1,j} \quad \text{for all } j \in \left[1, \frac{n+1}{2}\right],
    \]
    \[
        d_{n-2,2j-1} \geq d_{n,j} \quad \text{for all } j \in \left[1, \frac{n+2}{2}\right],
    \]
    \[
        d_{n-2,2j-2} \leq d_{n,j} \quad \text{for all } j \in \left[2, \frac{n+2}{2}\right].
    \]
    Moreover, the coloring function satisfies $\col(d_{i,j}) = i$ for all $i \in [1, n]$ and for all $j$, that is, each element $d_{i,j}$ is colored by vertex $i$. The weight function satisfies
    \[
        \weight(d_{i,j}) = \hbar^{n-i+j-2} \quad \text{for all } i \in [1, n-2], j \in [1, i],
    \]
    \[
        \weight(d_{n-1,j}) = \hbar^{2j-1} \quad \text{for all } j \in \left[1, \frac{n+1}{2}\right],
    \]
    and
    \[
        \weight(d_{n,j}) = \hbar^{2j-2} \quad \text{for all } j \in \left[1, \frac{n+2}{2}\right].
    \]
\end{definition}

\begin{definition}
    An order ideal of the type $D_{n}$ spin poset, along with the induced coloring and weight function, is called a $D_{n}$ poset of type $\omega_{n}$.
\end{definition}

Given a $D_n$ spin poset $\mc{P} = (P, \le)$ of type $\omega_{n}$, we construct the corresponding quiver representation as follows.
For each vertex $i \in Q_0$, assign a vector space $V_i$ over $\CC$ whose dimension is equal to the number of elements in $P$ colored by $i$, i.e., $\dim V_i = \left|\col^{-1}(i)\right|$.
For each element $d_{i,j} \in P$, assign a basis vector $e_{i,j} \in V_i$. We define linear maps $X_{e}:V_{o(e)} \to V_{i(e)}$ and $Y_{e}: V_{i(e)} \to V_{o(e)}$ for each edge $e \in Q_{1}$ in terms of the vectors $e_{i,j}$ by asserting that
\begin{itemize}
    \item $X_{e}$ maps $e_{o(e),j}$ to $e_{i(e),k}$ whenever there is a covering relation $d_{o(e),j}\leq d_{i(e),k}$,
    \item $Y_{e}$ maps $e_{i(e),j}$ to $e_{o(e),k}$ whenever there is a covering relation $d_{i(e),j}\leq d_{o(e),k}$,
    \item the remaining matrix coefficients are zero.
\end{itemize}
We also define $A_{n} : \CC \to V_{n}$ by $A_{n}(1)=d_{n,1}$ and $A_{i}=0$ for $i < n$. We set $B_{i}=0$ for all $i$. Overall, we obtain a collection of linear maps $(X,Y,A,B)$ which is the corresponding quiver representation of the $D_n$ poset $\mathcal P$ of type $\omega_n$.

\begin{proposition}\label{posettorep}
    The quiver representation $(X,Y,A,B)$ associated to a $D_{n}$ poset of type $\omega_{n}$ is stable and satisfies the moment map equations.
\end{proposition}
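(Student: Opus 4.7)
The plan is to verify the moment map equations and the stability condition of Proposition~\ref{stability} separately. The key structural observation is that both $X_e$ and $Y_e$ send basis vectors upward along covering relations in the poset, so on any basis vector $e_{i,j}$ every composition $X_e Y_e$ or $Y_e X_e$ is either zero or corresponds to a length-two ascending walk in the Hasse diagram passing through an adjacent color and returning to color $i$. Uniqueness of the relevant covering targets (each element of the spin poset has at most one covering neighbor of any given adjacent color) ensures these walks are unambiguously defined.

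For the moment map, note that $B_i = 0$ for all $i$ and $A_i = 0$ for $i \neq n$, so $A_i B_i$ vanishes at every vertex, and the task reduces to showing $\sum_{e: i(e) = i} X_e Y_e = \sum_{e: o(e) = i} Y_e X_e$ on $V_i$ for each $i$. At an interior vertex $i \in [2, n-3]$, the two summands evaluated at $d_{i,j}$ both land on $d_{i, j+1}$, one walking through $d_{i-1, j}$ and the other through $d_{i+1, j+1}$, and cancel by their opposite signs; if $d_{i, j+1}$ is absent from the order ideal, downward closure forces both walks to vanish. At the trivalent vertex $n-2$ the analysis splits by parity of $j$: for odd $j$ the walk through $V_n$ vanishes because the map $V_{n-2} \to V_n$ is zero on odd-indexed basis vectors of $V_{n-2}$, while the remaining two walks land on $d_{n-2, j+1}$ and cancel; for even $j$ the walk through $V_{n-1}$ vanishes by the analogous reason, and the remaining two walks cancel. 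At the boundary vertex $1$ and at the spin nodes $n-1$ and $n$ there is only one surviving composition, and one checks directly that it is zero because the second step of the walk has no valid target of the correct color in the poset.

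For stability, I invoke Proposition~\ref{stability}. The image of the framing map is spanned by $e_{n, 1}$, the basis vector indexed by the unique minimum $d_{n, 1}$ of the type $D_n$ spin poset, so any subspace system $(S_i)$ satisfying the stability hypotheses contains $e_{n, 1} \in S_n$. Every element of a nonempty order ideal lies above $d_{n, 1}$, and each covering relation in the poset is realized by exactly one $X_e$ or $Y_e$. Iteratively applying these maps along a saturated chain from $d_{n, 1}$ up to any $d_{i, j}$ in the ideal therefore produces $e_{i, j} \in S_i$, forcing $S_i = V_i$ for all $i$.

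The main obstacle is the case analysis at the trivalent vertex $n-2$, where the cancellation structure depends on the parity of the index and exactly one of the three naive walks must vanish identically due to the asymmetric covering pattern of the $D_n$ spin poset near the spin nodes.
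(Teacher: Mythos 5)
Your proof is correct and takes essentially the same approach as the paper's: stability via Proposition~\ref{stability} and reachability of every basis vector from $e_{n,1}$ along saturated chains from the minimal element $d_{n,1}$, and the moment map equations via direct verification from the definition of the maps. The paper's own proof compresses the latter into the assertion that it ``follows directly from the definitions,'' so your explicit vertex-by-vertex cancellation, the parity analysis at the trivalent node $n-2$, and the observation that absence of the target $d_{i,j+1}$ from the order ideal kills both competing walks simultaneously, correctly supply the details the paper omits.
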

\begin{proof}
    Recall Proposition~\ref{stability} which gives a characterization of stability. Combinatorially, this means that one is able to reach any basis element from the framing lowest basis element by traversing through arrows. This follows from the fact that each nonempty $D_{n}$ poset of type $\omega_{n}$ has a minimal element, namely $d_{n,1}$. It remains to show that the quiver representation associated to $D_n$ poset of type $\omega_n$ satisfies $\mu = 0$ at each vertex. We need the right-hand side of Equation~\ref{eq:moment-map} to vanish identically:
    \[
        \sum_{\substack{e \in Q_1 \\ i(e)=i}} X_e Y_e-\sum_{\substack{e \in Q_1 \\ o(e)=i}} Y_e X_e+A_i B_i = 0 \quad \text{for all } i \in Q_0.
    \]
    For the $D_{n}$ spin poset, this follows directly from the definitions, and is inherited by its order ideals.
\end{proof}

Putting together the results of this section, we obtain the following.

\begin{theorem}\label{posetthm1}
    The nonempty (and hence zero-dimensional) quiver varieties $\mc{M}_{D_{n}}(\msf{v},\delta_{n})$ are in canonical bijection with $D_{n}$ posets of type $\omega_{n}$.
\end{theorem}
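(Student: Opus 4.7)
The plan is to construct an explicit bijection that sends an order ideal $I$ of the $D_n$ spin poset to the unique point of $\mc{M}_{D_n}(\msf{v}_I, \delta_n)$, where $\msf{v}_I \in \NN^{Q_0}$ is defined by $\msf{v}_{I,i} := |I \cap \col^{-1}(i)|$. Proposition~\ref{posettorep} already guarantees that the quiver representation associated to $I$ is stable and satisfies the moment map equations, so this map is well-defined. Since each nonempty $\mc{M}_{D_n}(\msf{v}, \delta_n)$ is a single point by Proposition~\ref{minframing}, it suffices to show that $I \mapsto \msf{v}_I$ is a bijection between order ideals of $\mc{P}$ and the set of dimension vectors characterized in Lemma~\ref{lem:char-valid-v:w-last}.

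I would first establish that each color class of $\mc{P}$ is totally ordered. For $i \in [2, n-2]$, the relations $d_{i-1,j} \geq d_{i,j}$ and $d_{i-1,j} \leq d_{i,j+1}$ combine to give $d_{i,j} \leq d_{i,j+1}$, yielding the chain $d_{i,1} < \cdots < d_{i,i}$; an analogous zig-zag through color $n-2$ handles colors $n-1$ and $n$. Because an order ideal of a totally ordered set is determined by its cardinality, this shows that every order ideal of $\mc{P}$ has the form $I = \{d_{i,j} : 1 \leq j \leq \msf{v}_{I,i}\}$, which immediately gives injectivity of $I \mapsto \msf{v}_I$.

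It remains to identify the image of this map with the valid dimension vectors of Lemma~\ref{lem:char-valid-v:w-last}. In the forward direction, downward closure of $I$ together with $d_{i+1,j} \leq d_{i,j}$ forces $\msf{v}_{I,i} \leq \msf{v}_{I,i+1}$, while the relation $d_{i,j} \leq d_{i+1,j+1}$ forces $\msf{v}_{I,i+1} \leq \msf{v}_{I,i} + 1$; this is exactly the first condition of Lemma~\ref{lem:char-valid-v:w-last}. The remaining two conditions are extracted in the same way from the spin-node relations of Definition~\ref{dspinposet}. Conversely, given any valid $\msf{v}$, I would define $I := \{d_{i,j} : 1 \leq j \leq \msf{v}_i\}$ and verify using these same relations that $I$ is downward closed. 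The main technical obstacle, though routine, is the bookkeeping at the spin vertices, where $d_{n-2,*}$ interlaces $d_{n-1,*}$ and $d_{n,*}$ in alternating fashion; translating the constraints $\msf{v}_{n-2} \in \{\msf{v}_{n-1} + \msf{v}_n,\, \msf{v}_{n-1} + \msf{v}_n - 1\}$ and $\msf{v}_n \in \{\msf{v}_{n-1},\, \msf{v}_{n-1}+1\}$ into downward closure requires a short case analysis by the parity of the second index, but nothing more substantial. As a sanity check, this will produce exactly the $2^{n-1}$ order ideals predicted by Lemma~\ref{lem:char-valid-v:w-last}.
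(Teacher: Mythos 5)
Your proposal is correct and follows the same route the paper takes implicitly (Lemma~\ref{lem:char-valid-v:w-last} for the valid dimension vectors, Proposition~\ref{posettorep} for well-definedness, and a matching of order ideals with those vectors); you merely make explicit the step the paper leaves unstated, namely that each color class of the spin poset is a chain, so an order ideal is determined by, and realizes exactly, the admissible tuples $\msf{v}$. The chain and interlacing verifications you sketch are accurate, so this fills in the omitted details without changing the argument.
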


\begin{remark}
    Considering framing $\delta_{n-1}$ gives rise to a theorem analogous to the previous one, giving a canonical bijection between the nonempty quiver varieties $\mc{M}_{D_{n}}(\msf{v},\delta_{n-1})$ and $D_{n}$ posets of type $\omega_{n-1}$. The latter are identical as posets to $D_{n}$ posets of type $\omega_{n}$. The weight function is identical and the coloring function differs by swapping colors $n-1$ and $n$.
\end{remark}

\subsection{Case of \texorpdfstring{$\msf w = (1, 0^{n-1})$}{w = (1, 0^{n-1})}}\label{Dposet2}

Now we consider the case where $\msf{w}=\delta_{1}$, which we call the fundamental node. As in the previous subsection, Proposition~\ref{minframing} implies that $\mc{M}_{D_{n}}(\msf{v},\delta_{1})$ is nonempty if and only if it is a single point. Such dimension vectors are described by the following lemma.

\begin{lemma}\label{lem:framing-first-configs}
    The quiver variety $\mc{M}_{D_{n}}(\msf{v},\delta_{1})$ is nonempty if and only if $\msf{v}$ is one of the following vectors:
    \begin{itemize}
        \item $\msf{v}=(1^{k},0^{n-k})$ for $k \in [0,n]$,
        \item $\msf{v}=(1^{k},2^{n-k-2},1,1)$ for $k \in [0,n-3]$,
        \item $\msf{v}=(1^{n-2},0,1)$.
    \end{itemize}
    There are exactly $2n$ such $\msf{v}$.
\end{lemma}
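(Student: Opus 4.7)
The plan is to mirror the proof of Lemma~\ref{lem:char-valid-v:w-last}. By \cite{nakajima1994instantons}, the values of $\msf{v}$ for which $\mc{M}_{D_n}(\msf{v}, \delta_1)$ is nonempty are in bijection with the weights of the irreducible $\mf{so}_{2n}$-representation with highest weight $\omega_1$. This is the $2n$-dimensional vector (defining) representation, which is minuscule with weights $\pm e_1, \ldots, \pm e_n$, each of multiplicity one. Hence the total count of valid $\msf{v}$ is exactly $2n$, matching $(n+1)+(n-2)+1$.

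Given this count, it suffices to show that every listed $\msf{v}$ does correspond to a weight of this representation, and that the weights obtained are pairwise distinct. I would do this by a direct telescoping computation in the standard coordinates where $\alpha_i = e_i - e_{i+1}$ for $i \le n-1$ and $\alpha_n = e_{n-1} + e_n$. Writing each weight as $\omega_1 - \sum_i \msf{v}_i \alpha_i$, the four cases give: for $\msf{v} = (1^k, 0^{n-k})$ with $0 \le k \le n-1$, the weight $e_{k+1}$, enumerating $e_1,\ldots,e_n$; for $\msf{v} = (1^n)$, the weight $-e_{n-1}$; for $\msf{v} = (1^{n-2}, 0, 1)$, the weight $-e_n$; and for $\msf{v} = (1^k, 2^{n-k-2}, 1, 1)$ with $0 \le k \le n-3$, the weight $-e_{k+1}$, enumerating $-e_1,\ldots,-e_{n-2}$. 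Together these cover all $2n$ weights without repetition, and the lemma follows.

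I expect no serious obstacle; the only delicate bookkeeping is at the branching vertices $n-1$ and $n$, where $\alpha_{n-1}$ and $\alpha_n$ share the endpoint $e_{n-1}$. It is precisely this sharing that forces the isolated configuration $(1^{n-2}, 0, 1)$ to appear as a third family and that makes the doubled middle $2^{n-k-2}$ in the second family balance only when $\msf{v}_{n-1} = \msf{v}_n = 1$. As a cross-check I might also verify the zero-dimensionality condition $\sum_{e \in Q_1}\msf{v}_{o(e)}\msf{v}_{i(e)} + \msf{v}_1 - \sum_i \msf{v}_i^2 = 0$ coming from Proposition~\ref{prop:mq-dim} for each family, which is a routine calculation.
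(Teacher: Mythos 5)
Your proposal is correct and rests on the same key input as the paper's proof: Nakajima's identification of the nonempty dimension vectors with the weights of the $2n$-dimensional representation of highest weight $\omega_1$, followed by a count showing the listed vectors exhaust them. The only difference is in the verification step -- the paper checks the listed $\msf{v}$ via the dimension formula of Proposition~\ref{prop:mq-dim} and counts, whereas you compute the weights $\omega_1 - \sum_i \msf{v}_i\alpha_i$ explicitly and show they enumerate $\pm e_1,\ldots,\pm e_n$ without repetition; your version is, if anything, slightly tighter since it makes the distinctness and exhaustion of the $2n$ weights explicit.
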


\begin{proof}
    From Proposition~\ref{prop:mq-dim}, one can see that the dimension vectors in the statement of the lemma give rise to a zero dimensional quiver variety. There are a total of $(n+1)+(n-2)+1=2n$ such vectors. Since this is the dimension of the first fundamental representation of type $D_{n}$, the results of \cite{nakajima1994instantons} imply that we have exhibited all such $\msf{v}$.
\end{proof}

\begin{figure}[H]
    \centering
    \begin{subfigure}[t]{0.33\textwidth}
        \centering
        \begin{tikzpicture}[
                dot/.style={circle, fill, inner sep=2pt}
            ]
            \node[dot] (v1) at (0,0) {};
            \node[dot] (v2) at (1,1) {};
            \node[dot] (v3) at (2,2) {};
            \node[dot] (v4) at (3,3) {};
            \node[dot] (v5) at (1,3) {};
            \node[dot] (v6) at (2,4) {};
            \node[dot] (v7) at (1,5) {};
            \node[dot] (v8) at (0,6) {};

            \draw[->] (v1) -- (v2);
            \draw[->] (v2) -- (v3);
            \draw[->] (v3) -- (v4);
            \draw[->] (v3) -- (v5);
            \draw[->] (v4) -- (v6);
            \draw[->] (v5) -- (v6);
            \draw[->] (v6) -- (v7);
            \draw[->] (v7) -- (v8);

            \node[above=0.1cm of v1] {$d_{1,1}$};
            \node[above=0.1cm of v2] {$d_{2,1}$};
            \node[above=0.1cm of v3] {$d_{3,1}$};
            \node[above=0.1cm of v4] {$d_{4,1}$};
            \node[above=0.1cm of v5] {$d_{5,1}$};
            \node[above=0.1cm of v6] {$d_{3,2}$};
            \node[above=0.1cm of v7] {$d_{2,2}$};
            \node[above=0.1cm of v8] {$d_{1,2}$};
        \end{tikzpicture}
    \end{subfigure}%
    \begin{subfigure}[t]{0.33\textwidth}
        \centering
        \begin{tikzpicture}[
                dot/.style={circle, fill, inner sep=2pt}
            ]
            \node[dot] (v1) at (0,0) {};
            \node[dot] (v2) at (1,1) {};
            \node[dot] (v3) at (2,2) {};
            \node[dot] (v4) at (3,3) {};
            \node[dot] (v5) at (1,3) {};
            \node[dot] (v6) at (2,4) {};
            \node[dot] (v7) at (1,5) {};
            \node[dot] (v8) at (0,6) {};

            \draw[->] (v1) -- (v2);
            \draw[->] (v2) -- (v3);
            \draw[->] (v3) -- (v4);
            \draw[->] (v3) -- (v5);
            \draw[->] (v4) -- (v6);
            \draw[->] (v5) -- (v6);
            \draw[->] (v6) -- (v7);
            \draw[->] (v7) -- (v8);

            \node[above=0.1cm of v1] {$z_1$};
            \node[above=0.1cm of v2] {$z_2$};
            \node[above=0.1cm of v3] {$z_3$};
            \node[above=0.1cm of v4] {$z_4$};
            \node[above=0.1cm of v5] {$z_5$};
            \node[above=0.1cm of v6] {$z_3$};
            \node[above=0.1cm of v7] {$z_2$};
            \node[above=0.1cm of v8] {$z_1$};
        \end{tikzpicture}
    \end{subfigure}%
    \begin{subfigure}[t]{0.33\textwidth}
        \centering
        \begin{tikzpicture}[
                dot/.style={circle, fill, inner sep=2pt}
            ]
            \node[dot] (v1) at (0,0) {};
            \node[dot] (v2) at (1,1) {};
            \node[dot] (v3) at (2,2) {};
            \node[dot] (v4) at (3,3) {};
            \node[dot] (v5) at (1,3) {};
            \node[dot] (v6) at (2,4) {};
            \node[dot] (v7) at (1,5) {};
            \node[dot] (v8) at (0,6) {};

            \draw[->] (v1) -- (v2);
            \draw[->] (v2) -- (v3);
            \draw[->] (v3) -- (v4);
            \draw[->] (v3) -- (v5);
            \draw[->] (v4) -- (v6);
            \draw[->] (v5) -- (v6);
            \draw[->] (v6) -- (v7);
            \draw[->] (v7) -- (v8);

            \node[above=0.1cm of v1] {$1$};
            \node[above=0.1cm of v2] {$1$};
            \node[above=0.1cm of v3] {$1$};
            \node[above=0.1cm of v4] {$1$};
            \node[above=0.1cm of v5] {$1$};
            \node[above=0.1cm of v6] {$\hbar$};
            \node[above=0.1cm of v7] {$\hbar^2$};
            \node[above=0.1cm of v8] {$\hbar^3$};
        \end{tikzpicture}
    \end{subfigure}
    \caption{A $D_5$ poset of type $\omega_1$, with $\msf v = (2, 2, 2, 1, 1)$}
    \label{fig:d4-poset-omega1}
\end{figure}
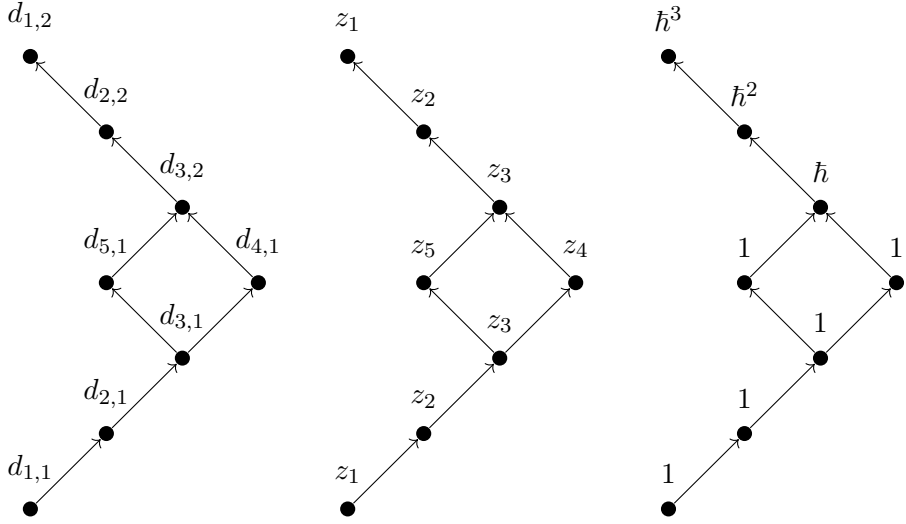

As above, we next exhibit quiver representations which are representatives of the unique point in $\mc{M}_{D_{n}}(\msf{v},\delta_{1})$ for the $\msf{v}$ described above.

\begin{definition}\label{dfundposet}
    The type $D_n$ fundamental representation poset $\mathcal{P} = \left(P, \leq\right)$ is defined by
    \[
        P = \{d_{i,j}\}_{(i,j) \in I},
    \]
    where $I = \{(i,j) : i \in [1, n-2], j \in [1, 2]\} \cup \{(n-1,1), (n,1)\}$.
    The partial order $\leq$ is defined by the following relations:
    \[
        d_{i,1} \leq d_{i+1,1} \quad \text{for all } i \in [1, n-2],
    \]
    \[
        d_{i,2} \geq d_{i+1,2} \quad \text{for all } i \in [1, n-3],
    \]
    \[
        d_{n-2,1} \leq d_{n,1}, \quad d_{n-2,2} \geq d_{n,1}, \quad d_{n-2,2} \geq d_{n-1,1}.
    \]
    Moreover, the coloring function satisfies $\col(d_{i,j}) = i$ for all $i \in [1, n]$ and for all $j$, that is, each element $d_{i,j}$ is colored by vertex $i$. The weight function satisfies $\weight(d_{i,1}) = 1$ for all $i \in [1, n]$ and $\weight(d_{i,2}) = \hbar^{n-i-1}$ for all $i \in [1, n-2]$.
\end{definition}

\begin{definition}
    An order ideal of the type $D_{n}$ fundamental poset, along with the induced coloring and weight function, is called a $D_{n}$ poset of type $\omega_{1}$.
\end{definition}

Given a $D_n$ poset $\mc{P} = (P, \le)$ of type $\omega_{1}$, we construct the corresponding quiver representation in the same way as before.
For each vertex $i \in Q_0$, assign a vector space $V_i$ over $\CC$ whose dimension is equal to the number of elements in $P$ colored by $i$, i.e., $\dim V_i = \left|\col^{-1}(i)\right|$. For each element $d_{i,j} \in P$, assign a basis vector $e_{i,j} \in V_i$. We define linear maps $X_{e}:V_{o(e)} \to V_{i(e)}$ and $Y_{e}: V_{i(e)} \to V_{o(e)}$ for each edge $e \in Q_{1}$ in terms of the vectors $e_{i,j}$ by asserting that
\begin{itemize}
    \item $X_{e}$ maps $e_{o(e),j}$ to $e_{i(e),k}$ whenever there is a covering relation $d_{o(e),j}\leq d_{i(e),k}$,
    \item $Y_{e}$ maps $e_{i(e),j}$ to $e_{o(e),k}$ whenever there is a covering relation $d_{i(e),j}\leq d_{o(e),k}$,
    \item the remaining matrix coefficients are zero.
\end{itemize}

We also define $A_{1} : \CC \to V_{1}$ by $A_{1}(1)=d_{1,1}$ and $A_{i}=0$ for $i > 1$. We set $B_{i}=0$ for all $i$. Overall, we obtain a collection of linear maps $(X,Y,A,B)$ which is the corresponding quiver representation of the $D_n$ poset $\mathcal P$ of type $\omega_1$.

\begin{proposition}
    The quiver representation associated to a $D_{n}$ poset of type $\omega_{1}$ is stable and satisfies the moment map equations.
\end{proposition}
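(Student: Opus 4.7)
The plan is to mirror the proof of Proposition~\ref{posettorep} exactly, adapted to the fundamental poset of Definition~\ref{dfundposet}. There are two things to verify: the stability criterion of Proposition~\ref{stability} and the moment map equations of Equation~\eqref{eq:moment-map}. The only essential difference from the $\omega_n$ case is the combinatorics of the poset, and the key structural features carry over.

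For stability, suppose $(S_i)_{i \in Q_0}$ is a collection of subspaces satisfying the three bullet conditions of Proposition~\ref{stability} for the representation attached to a non-empty order ideal $\mc P$. The $D_n$ fundamental poset has a unique minimum element $d_{1,1}$, so any non-empty order ideal contains $d_{1,1}$, and hence $S_1 \supseteq \Im(A_1) = \CC \cdot e_{1,1}$. Each covering relation in the poset corresponds to a nonzero matrix coefficient of some $X_e$ or $Y_e$, and crucially both $X_e$ and $Y_e$ raise the poset element (readable directly off the defining rules, since both map the smaller element of a cover to the larger one). Since $\mc P$ is connected with minimum $d_{1,1}$, starting from $e_{1,1}$ one can reach every basis vector $e_{i,j}$ with $d_{i,j} \in \mc P$ by successive applications of the $X_e$ and $Y_e$ maps, and closure of $(S_i)$ under all of these forces $S_i = V_i$. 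The empty order ideal corresponds to $\msf v = 0$, where stability is vacuous.

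For the moment map equations, the strategy is to verify $\mu = 0$ on the full fundamental poset and then inherit the vanishing on any order ideal. The vertex-by-vertex check is straightforward at vertices $1$ and $i \in [2, n-3]$, where the local structure is a chain. The delicate vertex is the branching vertex $n-2$, where evaluating $\mu$ on $e_{n-2,1}$ produces two contributions on $e_{n-2,2}$ coming from the up-and-down paths $d_{n-2,1} \le d_{n-1,1} \le d_{n-2,2}$ and $d_{n-2,1} \le d_{n,1} \le d_{n-2,2}$; these must be arranged to cancel by a consistent sign convention on the matrix coefficients of $X_e$ and $Y_e$, exactly as in the spin case. To pass from the full poset to an arbitrary order ideal $\mc P$, observe that the subspace $U_i := \mathrm{span}\{e_{i,j} : d_{i,j} \notin \mc P\}$ is stable under every $X_e$ and $Y_e$, because both maps move strictly upward in the poset and the complement of an order ideal is upward-closed. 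Thus $U = (U_i)$ is a subrepresentation of the full fundamental-poset representation, and the representation attached to $\mc P$ is the quotient $V/U$; since the moment map construction is natural with respect to such quotients, $\mu = 0$ on $V/U$ as well.

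The principal technical point is the consistent sign choice in the matrix coefficients that makes the cancellation at vertex $n-2$ work; once that is in place, the remaining arguments are structurally identical to the $\omega_n$ case. I expect the stability portion to be completely routine, while the cancellation check at $n-2$ is the only place where the geometry of the two spin nodes (both attached to the same trivalent node) plays a nontrivial role.
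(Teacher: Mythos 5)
Your proof is correct and follows essentially the same route as the paper, whose entire argument for this proposition is the single sentence that it is analogous to Proposition~\ref{posettorep}: stability via reachability of every basis vector from the minimal element $d_{1,1}$ using the criterion of Proposition~\ref{stability}, and the moment map equation \eqref{eq:moment-map} checked on the full fundamental poset and then inherited by order ideals (your quotient by the upward-closed complement is a clean way to make the inheritance precise, since both $X_e$ and $Y_e$ raise the poset element and $B=0$). The one place where you are genuinely more careful than the paper is the trivalent vertex, and your instinct there is right: with all matrix coefficients equal to $1$ on covering relations, as the construction literally prescribes, the diamond $d_{n-2,1}\lessdot d_{n-1,1},\,d_{n,1}\lessdot d_{n-2,2}$ contributes $-Y_{n-2\to n-1}X_{n-2\to n-1}-Y_{n-2\to n}X_{n-2\to n}$ to $\mu_{n-2}$, each term sending $e_{n-2,1}\mapsto e_{n-2,2}$, while the incoming edge $n-3\to n-2$ contributes nothing on $e_{n-2,1}$; hence $\mu_{n-2}(e_{n-2,1})=-2\,e_{n-2,2}\neq 0$ and a sign really must be inserted on one of the two legs. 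Your only slip is the phrase ``exactly as in the spin case'': in the spin poset of Definition~\ref{dspinposet} no element of color $n-2$ is covered by elements of both colors $n-1$ and $n$, so every diamond at the trivalent vertex pairs an incoming edge (contributing $+X_eY_e$) with an outgoing one (contributing $-Y_eX_e$), and the all-ones choice already satisfies $\mu=0$ there; the sign is a new feature of the $\omega_1$ case, not a repetition of the $\omega_n$ one. This does not affect the truth of the proposition, but it is a point the paper's proof-by-analogy silently skips.
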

\begin{proof}
    The proof is analogous to the proof of Proposition~\ref{posettorep}.
\end{proof}

Putting together the results of this section, we obtain the following.

\begin{theorem}\label{posetthm2}
    The nonempty (and hence zero-dimensional) quiver varieties $\mc{M}_{D_{n}}(\msf{v},\delta_{1})$ are in canonical bijection with $D_{n}$ posets of type $\omega_{1}$.
\end{theorem}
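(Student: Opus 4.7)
The plan is to parallel the proof of Theorem~\ref{posetthm1}. By the preceding proposition, each $D_n$ poset $\mc{P}$ of type $\omega_1$ produces a stable quiver representation satisfying the moment map equations, hence defines a point in $\mc{M}_{D_n}(\msf{v}(\mc{P}), \delta_1)$, where $\msf{v}(\mc{P})_i = |\col^{-1}(i)|$. By Proposition~\ref{minframing}, each nonempty such quiver variety consists of exactly one point, and by Lemma~\ref{lem:framing-first-configs} there are exactly $2n$ nonempty ones. It thus suffices to show that the order ideals of the fundamental poset from Definition~\ref{dfundposet} are in bijection with the $2n$ dimension vectors listed in Lemma~\ref{lem:framing-first-configs} via the map $\mc{P} \mapsto \msf{v}(\mc{P})$.

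The next step is to classify the order ideals explicitly. As illustrated by the $D_5$ example in Figure~\ref{fig:d4-poset-omega1}, the Hasse diagram is a ``bowtie'': a chain $d_{1,1} < d_{2,1} < \cdots < d_{n-2,1}$ branches into two incomparable covers $d_{n-1,1}$ and $d_{n,1}$, both of which are in turn covered by $d_{n-2,2}$, from which a second chain $d_{n-2,2} < d_{n-3,2} < \cdots < d_{1,2}$ rises. I would partition the order ideals according to how they interact with this structure. First, ideals not containing $d_{n-2,1}$ are exactly the prefixes of the lower chain (possibly empty), giving $n-1$ ideals with dimension vectors $(1^k, 0^{n-k})$ for $k \in [0, n-2]$. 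Second, ideals containing $d_{n-2,1}$ but not $d_{n-2,2}$ are obtained by adjoining a nonempty subset of $\{d_{n-1,1}, d_{n,1}\}$, giving three ideals with dimension vectors $(1^{n-1}, 0)$, $(1^{n-2}, 0, 1)$, and $(1^n)$. Third, ideals containing $d_{n-2,2}$ must contain both $d_{n-1,1}$ and $d_{n,1}$, and additionally a down-closed segment $\{d_{m,2}, d_{m+1,2}, \ldots, d_{n-2,2}\}$ of the upper chain for some $m \in [1, n-2]$, yielding $n-2$ more ideals with dimension vectors $(1^{m-1}, 2^{n-m-1}, 1, 1)$, which becomes $(1^k, 2^{n-k-2}, 1, 1)$ after the reindexing $k = m-1 \in [0, n-3]$.

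Summing gives $(n-1) + 3 + (n-2) = 2n$ order ideals, whose dimension vectors exhaust exactly the $2n$ entries of Lemma~\ref{lem:framing-first-configs} without repetition, which completes the bijection. The main and arguably only nontrivial step is the careful case analysis at the branching of the poset: the incomparability of $d_{n-1,1}$ and $d_{n,1}$ contributes \emph{three} rather than two intermediate ideals, which is precisely what produces the ``sporadic'' shape $(1^{n-2}, 0, 1)$ in Lemma~\ref{lem:framing-first-configs}. Once this enumeration is laid out, the verification that dimension vectors match on the nose is routine.
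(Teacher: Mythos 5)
Your proof is correct and follows essentially the same route as the paper, which states the theorem by ``putting together the results of this section'' and leaves the order-ideal enumeration implicit; you carry that enumeration out explicitly, and it checks out. One cosmetic slip: your first case is labeled ``ideals not containing $d_{n-2,1}$'' yet includes the full lower-chain prefix $k=n-2$, which does contain $d_{n-2,1}$; the intended trichotomy (prefixes of the lower chain; lower chain plus a nonempty subset of $\{d_{n-1,1},d_{n,1}\}$; ideals containing $d_{n-2,2}$) is nonetheless exhaustive, and the count $(n-1)+3+(n-2)=2n$ matches Lemma~\ref{lem:framing-first-configs} exactly.
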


\section{Vertex functions}\label{sec:type-d-vertex-functions}

In this section, we will review the definition of vertex functions, our main objects of interest. Vertex functions for Nakajima quiver varieties were defined by A.~Okounkov in \cite{okounkov2017lectures} using quasimap machinery developed in \cite{cfkm2014quasimapsgit}. Later we will restrict to the case of type $D$ quiver varieties and spell out the definitions more explicitly in this context.

\subsection{General definitions}\label{subsec:qmdef}

Let $\mc{M}:=\mc{M}_{Q}(\msf v, \msf w)$ be a quiver variety. Recall that $\mc{M}$ is defined as $\mu^{-1}(0)/\!\!/_{\theta} G_{\msf v}$. Then there is an open embedding
\[
    \mc{M} \hookrightarrow [\mu^{-1}(0)/G_{\msf v}]=:\mf{M}.
\]
A quasimap from $\mathbb{P}^{1}$ to $\mc{M}$ is by definition a map $f$ from $\mathbb{P}^{1} \to \mf{M}$. A quasimap is said to be stable if $f(p) \in \mc{M}$ for all but finitely many $p \in \mathbb{P}^{1}$. Points $p$ such that $f(p) \in \mf{M} \setminus \mc{M}$ are called singularities of $f$. Let $\msf{QM}$ be the stack parameterizing stable quasimaps from $\mathbb{P}^{1}$ to $\mc{M}$.

Unpacking the definition, one sees that the data of a quasimap $f$ consists vectors bundles $\mc{V}_{i}$ and topologically trivial bundles $\mc{W}_{i}$ for $i \in Q_0$ over $\mathbb{P}^{1}$ along with a section of a certain bundle built from $\mc{V}_{i}$ and $\mc{W}_{i}$; see Section~4 of \cite{okounkov2017lectures}. The degree of the quasimap $f$ is defined to be
\[
    \deg f:=(\deg \mc{V}_{i})_{i \in Q_0} \in \mathbb{Z}^{Q_0}.
\]
For $\bs d \in \mathbb{Z}^{Q_0}$, let $\msf{QM}^{\bs d}$ be the stack parameterizing degree $\bs d$ stable quasimaps. Thus
\[
    \msf{QM}= \bigsqcup_{\bs d \in \mathbb{Z}^{Q_0}} \msf{QM}^{\bs d}.
\]
Let $\msf{QM}_{\text{ns}, \infty}^{\bs d} \subset \msf{QM}^{\bs d}$ be the space of quasimaps which are nonsingular at $\infty \in \PP^{1}$. There is a diagram
\[\begin{tikzcd}
        & \msf{QM}_{\text{ns}, \infty}^{\bs d} \arrow{dl}{\ev_{0}} \arrow{dr}{\ev_{\infty}} & \\
        \mf{M} & & \mc{M}
    \end{tikzcd}\]
where the maps are given by evaluating quasimaps at $0$ and $\infty$. The action of the maximal torus $\msf T \subset \Aut(\mc{M})$ induces an action on $\msf{QM}^{\bs d}_{\text{ns},\infty}$. In addition, there is an action of $\CC^{\times}$ on $\msf{QM}^{\bs d}_{\text{ns},\infty}$ induced by the usual action on $\PP^{1}$. We denote this torus by $\CC_{q}^{\times}$. The restriction of $\ev_{\infty}$ to $\left(\msf{QM}^{\bs d}_{\text{ns}, \infty}\right)^{\mathbb{C}^{\times}_{q}}$ is known to be proper; see Section~7.3 of \cite{okounkov2017lectures}. Hence the pushforward $\ev_{\infty,*}$ can be defined in localized equivariant $K$-theory.

It is known that $\msf{QM}^{\bs d}$ is equipped with a canonical perfect obstruction theory, which gives rise to a virtual structure sheaf $\mc{O}_{\text{vir}}^{\bs d}$. For technical reasons, it is better to study the symmetrized virtual structure sheaf $\hat{\mc{O}}_{\text{vir}}^{\bs{d}}$, which differs from the virtual structure sheaf by a twist by certain line bundle; see Section~6 of \cite{okounkov2017lectures}. This twist depends on a choice of polarization of $\mc{M}$, and we make the choice \eqref{polarization}.

\begin{definition}[\cite{okounkov2017lectures}]
    Let $\tau \in K_{\msf{T} \times \CC^{\times}_{q}}(\mf{M})$. The vertex function with descendant $\tau$ is the formal power series
    \[
        V^{(\tau)}(\bs{z}) = \sum_{\bs{d}} \ev_{\infty, *}\left(\hat{\mc{O}}_{\text{vir}}^{\bs{d}} \otimes \ev_{0}^{*}(\tau)\right) \bs{z}^{\bs{d}} \in K_{\msf T \times \CC_q^{\times}}(\mc{M})_{loc}[[z]],
    \]
    where $\bs{z}^{\bs d}:=\prod_{i \in Q_0} z_{i}^{d_{i}}$.
\end{definition}

The formal parameters $z_{i}$ are called K\"ahler parameters.
To write explicit formulas for vertex functions, we will need some additional notation. Let $\varphi(x) := \prod_{i \geq 0}\left(1-x q^i\right)$. The $q$-Pochhammer symbol is defined by
\[
    (x)_d := \frac{\varphi(x)}{\varphi(x q^d)}= \begin{cases}
        1                                 & \quad\text{if }d = 0, \\
        \prod_{i=0}^{d-1}(1-x q^{i})      & \quad\text{if }d > 0, \\
        \prod_{i=1}^{-d}(1-x q^{-i})^{-1} & \quad\text{if }d <0,
    \end{cases}
\]
and the $q$-binomial series is defined by
\[
    F(z) := \sum_{d=0}^{\infty} \frac{(\hbar)_d}{(q)_d} z^d.
\]

\begin{proposition}[\cite{gasperrahman2004}]
    The $q$-binomial series $F(z)$ satisfies the following identity:
    \[
        F(z) = \prod_{n=0}^{\infty} \frac{1-z \hbar q^n}{1-z q^n}.
    \]
\end{proposition}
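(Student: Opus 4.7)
The plan is to prove this identity by establishing a $q$-difference equation for the infinite product and matching coefficients with the power-series expansion of $F(z)$. Denote the right-hand side by
\[
    G(z) := \prod_{n=0}^{\infty} \frac{1-\hbar z q^n}{1-z q^n},
\]
which is well-defined in $\mathbb{Q}(q,\hbar)\pser{z}$ since each factor has the form $1 + O(z)$, so only finitely many factors contribute modulo any fixed power of $z$.

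The first step is to observe, by reindexing $n \mapsto n+1$ inside $G(qz)$ and tracking the single missing factor at $n=0$ in numerator and denominator separately, the functional equation
\[
    (1-z)\, G(z) = (1-\hbar z)\, G(qz).
\]
The second step is to write $G(z) = \sum_{d \geq 0} c_d z^d$ with $c_0 = G(0) = 1$, extract the coefficient of $z^d$ from both sides of the functional equation, and obtain the recursion
\[
    c_d = \frac{1-\hbar q^{d-1}}{1-q^d}\, c_{d-1} \quad \text{for } d \geq 1.
\]
A straightforward induction then yields $c_d = \prod_{i=0}^{d-1} \frac{1 - \hbar q^i}{1 - q^{i+1}} = \frac{(\hbar)_d}{(q)_d}$, which are exactly the coefficients of $F(z)$ by its definition as a $q$-binomial series. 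Hence $G(z) = F(z)$.

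The argument is entirely formal, so there is no genuine obstacle; the only point requiring minor care is to interpret the infinite product as a formal power series rather than a convergent analytic one, which is handled by the $z$-adic stabilization observation above. Alternatively, since this is the classical $q$-binomial theorem (Heine's identity), one may simply invoke it as a known result from \cite{gasperrahman2004} without reproducing the proof.
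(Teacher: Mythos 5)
Your proof is correct. Note, however, that the paper does not actually prove this proposition: it is stated with a citation to Gasper--Rahman and treated as the classical $q$-binomial theorem (Heine's identity with $a=\hbar$), exactly as your closing remark suggests. Your self-contained argument via the functional equation $(1-z)\,G(z)=(1-\hbar z)\,G(qz)$, coefficient extraction, and the resulting recursion $c_d=\frac{1-\hbar q^{d-1}}{1-q^d}c_{d-1}$ is the standard textbook derivation and is complete; the observation that the infinite product stabilizes $z$-adically, so everything can be read formally in $\mathbb{Q}(q,\hbar)[[z]]$, is the right way to sidestep convergence questions and is consistent with how the paper uses $F$ throughout (as a formal power series in the K\"ahler parameters). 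In short, you supply a proof where the paper supplies a reference; both are legitimate, and your version buys self-containedness at the cost of a few lines.
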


\subsection{Type \texorpdfstring{$D$}{D} vertex functions}

Let $Q$ be the type $D_{n}$ quiver and let $\msf{v},\msf{w} \in \NN^{Q_0}$. We are interested in explicit formulas for vertex functions, so we will restrict our attention to cases where $\mc{M}^{\msf{T}}$ consists of isolated points. By Proposition~\ref{minframing}, it is sufficient to assume that $\msf{w}_{i}=0$ unless $i$ is a minuscule vertex. We make this assumption now. In other words, we only allow nontrivial framings at vertices $1$, $n-1$, and $n$.

We have $\msf{T}=(\CC^{\times})^{|\msf{w}|} \times \CC^{\times}_{\hbar}$. The set $\mc{M}^{\msf{T}}$ is described completely and explicitly by combining \eqref{tens} and the results of Section~\ref{sec:stable-quiver-rep}.

Let $p \in \mc{M}^{\msf{T}}$. In this description, the fixed point $p$ is given by a $|\msf{w}|$-tuple
\[
    \left(\pst_{p}^{i,j}\right)_{\substack{i \in Q_{0} \\ 1 \leq j \leq \msf{w}_{i}}}
\]
of colored posets where $\pst_{p}^{i,j}$ is of type $\omega_{i}$ for all $j \in \{1,\ldots,\msf{w}_{i}\}$ such that the total number of elements of color $k$ is equal to $\msf{v}_{k}$ for all $k \in Q_0$. We denote
\[
    \pst_{p} = \bigsqcup_{\substack{i \in Q_{0} \\ 1 \leq j \leq \msf{w}_{i}}} \pst_{p}^{i,j}.
\]
There is a coloring function $c_{p}:\pst_{p} \to Q_{0}$ which restricts to the coloring functions of Sections~\ref{Dposet1} and~\ref{Dposet2} on each $\pst_{p}^{i,j}$. As in Sections~\ref{Dposet1} and~\ref{Dposet2}, each $\pst_{p}^{i,j}$ has a weight function $\weight_{i,j}: \pst_{p}^{i,j} \to \mathbb{Z}[\hbar^{\pm 1}]$. We define a weight function
\[
    \weight:\pst_{p} \to \text{Rep}(\msf{T})=\mathbb{Z}\left[\left\{a_{i,j}^{\pm 1}\right\}_{\substack{i \in Q_0 \\ 1 \leq j \leq \msf{w}_{i}}},\hbar^{\pm 1}\right]
\]
on $\pst_{p}$ by $\weight(x)=\weight_{i,j}(x) a_{i,j}$ for $x \in \pst_{p}^{i,j}$.

The results of Section~\ref{sec:stable-quiver-rep} imply that the $\msf{T}$-weights of the restrictions $\tb_{i}|_{p}$ are given by
\begin{equation}\label{tbweights}
    \tb_{i}|_{p}=\sum_{x \in \col_{p}^{-1}(i)} \weight(x).
\end{equation}

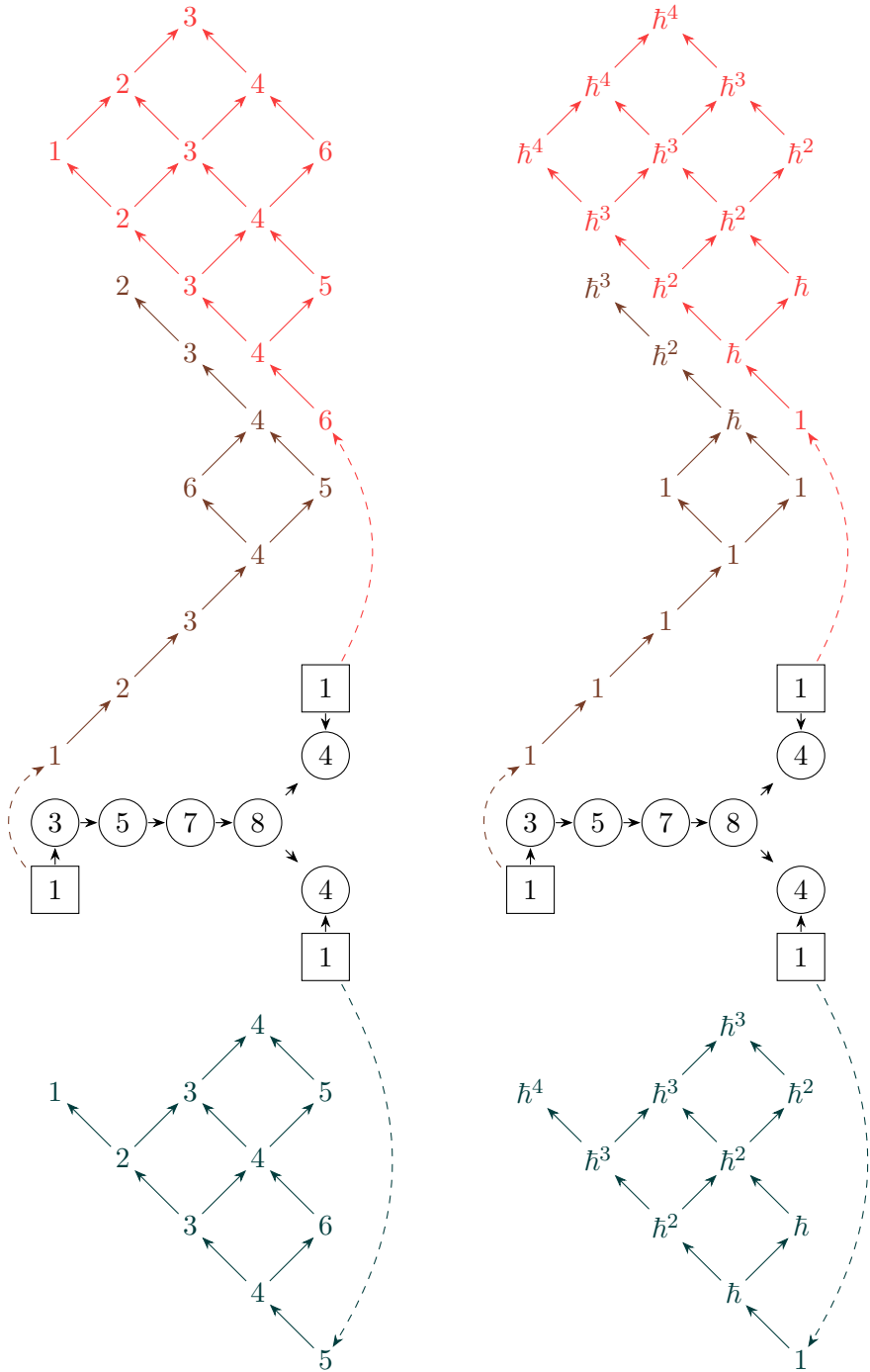
\begin{figure}[H]
    \centering
    \begin{subfigure}[b]{0.5\textwidth}
        \begin{tikzpicture}[
            x=0.9cm, y=0.9cm,
            >={Stealth},
            ->,
            shorten >=-3pt, shorten <=-3pt
            ]
            \definecolor{brownColor}{RGB}{120,60,38}
            \definecolor{pinkColor}{RGB}{250,60,60}
            \definecolor{cyanColor}{RGB}{0,60,60}

            \begin{scope}[shift={(-1,1)}, color=brownColor]
                \node (z1_l) at (1,0) {$1$};
                \node (z2) at (2,1) {$2$};
                \node (z3) at (3,2) {$3$};
                \node (z4) at (4,3) {$4$};
                \node (z6) at (3,4) {$6$};
                \node (z5) at (5,4) {$5$};
                \node (z4t) at (4,5) {$4$};
                \node (z3t) at (3,6) {$3$};
                \node (z2t) at (2,7) {$2$};

                \draw[->] (z1_l) -- (z2);
                \draw[->] (z2) -- (z3);
                \draw[->] (z3) -- (z4);
                \draw[->] (z4) -- (z6);
                \draw[->] (z4) -- (z5);
                \draw[->] (z6) -- (z4t);
                \draw[->] (z5) -- (z4t);
                \draw[->] (z4t) -- (z3t);
                \draw[->] (z3t) -- (z2t);
            \end{scope}

            \begin{scope}[shift={(0,-4)}, color=cyanColor]
                \node (z4) at (3,1) {$4$};
                \node (z1) at (0,0) {$1$};
                \node (z3) at (2,0) {$3$};
                \node (z5) at (4,0) {$5$};
                \node (z2) at (1,-1) {$2$};
                \node (z4b) at (3,-1) {$4$};
                \node (z3b) at (2,-2) {$3$};
                \node (z6) at (4,-2) {$6$};
                \node (z4c) at (3,-3) {$4$};
                \node (z5b) at (4,-4) {$5$};

                \draw[->] (z3) -- (z4);
                \draw[->] (z5) -- (z4);
                \draw[->] (z2) -- (z1);
                \draw[->] (z2) -- (z3);
                \draw[->] (z4b) -- (z3);
                \draw[->] (z4b) -- (z5);
                \draw[->] (z3b) -- (z2);
                \draw[->] (z3b) -- (z4b);
                \draw[->] (z6) -- (z4b);
                \draw[->] (z4c) -- (z3b);
                \draw[->] (z4c) -- (z6);
                \draw[->] (z5b) -- (z4c);
            \end{scope}

            \begin{scope}
                \node (n1_bot) at (0,-1) {\squared{$1$}};
                \node (n3) at (0,0) {\circled{$3$}};
                \node (n5) at (1,0) {\circled{$5$}};
                \node (n7) at (2,0) {\circled{$7$}};
                \node (n8) at (3,0) {\circled{$8$}};
                \node (n4_top) at (4,1) {\circled{$4$}};
                \node (n1_tr) at (4,2) {\squared{$1$}};
                \node (n1_br) at (4,-2) {\squared{$1$}};
                \node (n4_bot) at (4,-1) {\circled{$4$}};

                \draw[->] (n3) -- (n1_bot);
                \draw[->] (n5) -- (n3);
                \draw[->] (n7) -- (n5);
                \draw[->] (n8) -- (n7);
                \draw[->] (n4_top) -- (n8);
                \draw[->] (n4_top) -- (n1_tr);
                \draw[->] (n4_bot) -- (n1_br);
                \draw[->] (n4_bot) -- (n8);
            \end{scope}

            \begin{scope}[shift={(0,10)}, color=pinkColor]
                \node (z3_top) at (2,2) {$3$};
                \node (z2_left) at (1,1) {$2$};
                \node (z4_right) at (3,1) {$4$};
                \node (z1) at (0,0) {$1$};
                \node (z3_mid) at (2,0) {$3$};
                \node (z6) at (4,0) {$6$};
                \node (z2_left2) at (1,-1) {$2$};
                \node (z4_right2) at (3,-1) {$4$};
                \node (z3_bot) at (2,-2) {$3$};
                \node (z5) at (4,-2) {$5$};
                \node (z4_bot) at (3,-3) {$4$};
                \node (z6_bot) at (4,-4) {$6$};

                \draw[->] (z2_left) -- (z3_top);
                \draw[->] (z4_right) -- (z3_top);
                \draw[->] (z1) -- (z2_left);
                \draw[->] (z3_mid) -- (z2_left);
                \draw[->] (z3_mid) -- (z4_right);
                \draw[->] (z6) -- (z4_right);
                \draw[->] (z2_left2) -- (z1);
                \draw[->] (z2_left2) -- (z3_mid);
                \draw[->] (z4_right2) -- (z3_mid);
                \draw[->] (z4_right2) -- (z6);
                \draw[->] (z3_bot) -- (z2_left2);
                \draw[->] (z3_bot) -- (z4_right2);
                \draw[->] (z5) -- (z4_right2);
                \draw[->] (z4_bot) -- (z3_bot);
                \draw[->] (z4_bot) -- (z5);
                \draw[->] (z6_bot) -- (z4_bot);
            \end{scope}

            \draw[dashed, bend right=30, color=pinkColor] (n1_tr) to (z6_bot);
            \draw[dashed, bend left=30, color=cyanColor] (n1_br) to (z5b);
            \draw[dashed, bend left=50, color=brownColor] (n1_bot) to (z1_l);
        \end{tikzpicture}
    \end{subfigure}%
    \begin{subfigure}[b]{0.5\textwidth}
        \begin{tikzpicture}[
            x=0.9cm, y=0.9cm,
            >={Stealth},
            ->,
            shorten >=-3pt, shorten <=-3pt
            ]
            \definecolor{brownColor}{RGB}{120,60,38}
            \definecolor{pinkColor}{RGB}{250,60,60}
            \definecolor{cyanColor}{RGB}{0,60,60}

            \begin{scope}[shift={(-1,1)}, color=brownColor]
                \node (z1_l) at (1,0) {$1$};
                \node (z2) at (2,1) {$1$};
                \node (z3) at (3,2) {$1$};
                \node (z4) at (4,3) {$1$};
                \node (z6) at (3,4) {$1$};
                \node (z5) at (5,4) {$1$};
                \node (z4t) at (4,5) {$\hbar$};
                \node (z3t) at (3,6) {$\hbar^2$};
                \node (z2t) at (2,7) {$\hbar^3$};

                \draw[->] (z1_l) -- (z2);
                \draw[->] (z2) -- (z3);
                \draw[->] (z3) -- (z4);
                \draw[->] (z4) -- (z6);
                \draw[->] (z4) -- (z5);
                \draw[->] (z6) -- (z4t);
                \draw[->] (z5) -- (z4t);
                \draw[->] (z4t) -- (z3t);
                \draw[->] (z3t) -- (z2t);
            \end{scope}

            \begin{scope}[shift={(0,-4)}, color=cyanColor]
                \node (z4) at (3,1) {$\hbar^3$};
                \node (z1) at (0,0) {$\hbar^4$};
                \node (z3) at (2,0) {$\hbar^3$};
                \node (z5) at (4,0) {$\hbar^2$};
                \node (z2) at (1,-1) {$\hbar^3$};
                \node (z4b) at (3,-1) {$\hbar^2$};
                \node (z3b) at (2,-2) {$\hbar^2$};
                \node (z6) at (4,-2) {$\hbar$};
                \node (z4c) at (3,-3) {$\hbar$};
                \node (z5b) at (4,-4) {$1$};

                \draw[->] (z3) -- (z4);
                \draw[->] (z5) -- (z4);
                \draw[->] (z2) -- (z1);
                \draw[->] (z2) -- (z3);
                \draw[->] (z4b) -- (z3);
                \draw[->] (z4b) -- (z5);
                \draw[->] (z3b) -- (z2);
                \draw[->] (z3b) -- (z4b);
                \draw[->] (z6) -- (z4b);
                \draw[->] (z4c) -- (z3b);
                \draw[->] (z4c) -- (z6);
                \draw[->] (z5b) -- (z4c);
            \end{scope}

            \begin{scope}
                \node (n1_bot) at (0,-1) {\squared{$1$}};
                \node (n3) at (0,0) {\circled{$3$}};
                \node (n5) at (1,0) {\circled{$5$}};
                \node (n7) at (2,0) {\circled{$7$}};
                \node (n8) at (3,0) {\circled{$8$}};
                \node (n4_top) at (4,1) {\circled{$4$}};
                \node (n1_tr) at (4,2) {\squared{$1$}};
                \node (n1_br) at (4,-2) {\squared{$1$}};
                \node (n4_bot) at (4,-1) {\circled{$4$}};

                \draw[->] (n3) -- (n1_bot);
                \draw[->] (n5) -- (n3);
                \draw[->] (n7) -- (n5);
                \draw[->] (n8) -- (n7);
                \draw[->] (n4_top) -- (n8);
                \draw[->] (n4_top) -- (n1_tr);
                \draw[->] (n4_bot) -- (n1_br);
                \draw[->] (n4_bot) -- (n8);
            \end{scope}

            \begin{scope}[shift={(0,10)}, color=pinkColor]
                \node (z3_top) at (2,2) {$\hbar^4$};
                \node (z2_left) at (1,1) {$\hbar^4$};
                \node (z4_right) at (3,1) {$\hbar^3$};
                \node (z1) at (0,0) {$\hbar^4$};
                \node (z3_mid) at (2,0) {$\hbar^3$};
                \node (z6) at (4,0) {$\hbar^2$};
                \node (z2_left2) at (1,-1) {$\hbar^3$};
                \node (z4_right2) at (3,-1) {$\hbar^2$};
                \node (z3_bot) at (2,-2) {$\hbar^2$};
                \node (z5) at (4,-2) {$\hbar$};
                \node (z4_bot) at (3,-3) {$\hbar$};
                \node (z6_bot) at (4,-4) {$1$};

                \draw[->] (z2_left) -- (z3_top);
                \draw[->] (z4_right) -- (z3_top);
                \draw[->] (z1) -- (z2_left);
                \draw[->] (z3_mid) -- (z2_left);
                \draw[->] (z3_mid) -- (z4_right);
                \draw[->] (z6) -- (z4_right);
                \draw[->] (z2_left2) -- (z1);
                \draw[->] (z2_left2) -- (z3_mid);
                \draw[->] (z4_right2) -- (z3_mid);
                \draw[->] (z4_right2) -- (z6);
                \draw[->] (z3_bot) -- (z2_left2);
                \draw[->] (z3_bot) -- (z4_right2);
                \draw[->] (z5) -- (z4_right2);
                \draw[->] (z4_bot) -- (z3_bot);
                \draw[->] (z4_bot) -- (z5);
                \draw[->] (z6_bot) -- (z4_bot);
            \end{scope}

            \draw[dashed, bend right=30, color=pinkColor] (n1_tr) to (z6_bot);
            \draw[dashed, bend left=30, color=cyanColor] (n1_br) to (z5b);
            \draw[dashed, bend left=50, color=brownColor] (n1_bot) to (z1_l);
        \end{tikzpicture}
    \end{subfigure}
    \caption{The poset of a $D_6$ quiver variety with $\msf v = (3,5,7,8,4,4)$ and $\msf w = (1,0,0,0,1,1)$, elements colored with colors $i$ for $i \in [1, 6]$ on the left, and $\weight_{i,1}(x)$ filled in for each $x \in \mathcal P_p^{i,1}$ for each $i \in \{1,n-1,n\}$ on the right}
    \label{fig:d6-multi-framing}
\end{figure}

\begin{example}
    Consider the $D_6$ quiver variety $\mc M$ with dimension vector $\msf v = (3,5,7,8,4,4)$ and framing $\msf w = (1,0,0,0,1,1)$. From Proposition~\ref{prop:mq-dim}, we see that $\dim_\CC \mc M = 0$, and $\mc M^{\msf T}$ consists of isolated points. Let $p \in \mc M^{\msf T}$. From the description of the torus fixed points and the poset $\mathcal P_p$ above, we notice that $\mathcal P_p$ splits into a disjoint union of the three posets $\mathcal P_p^{1,1}$, $\mathcal P_p^{n-1,1}$, and $\mathcal P_p^{n,1}$. Then the dimension vector $\msf v$ also respectively splits into the sum of following three vectors: $\msf v^{1,1} = (1,2,2,2,1,1)$, $\msf v^{n-1,1} = (1,2,3,3,2,1)$, and $\msf v^{n,1} = (1,1,2,3,1,2)$, whose posets are illustrated in Figure \ref{fig:d6-multi-framing}.
    One can compute the tautological bundle weights from Equation~\ref{tbweights} explicitly as follows:
    \begin{itemize}
        \item $\tb_1|_p = a_{1,1} + a_{n-1,1} \hbar^4 + a_{n,1} \hbar^4$,
        \item $\tb_2|_p = a_{1,1} (1+\hbar^4) + a_{n-1,1} (\hbar^3 + \hbar^4) + a_{n,1} \hbar^3$,
        \item $\tb_3|_p = a_{1,1} (1+\hbar^3) + a_{n-1,1} (\hbar^2 + \hbar^3 + \hbar^4) + a_{n,1} (\hbar^2 + \hbar^3)$,
        \item $\tb_4|_p = a_{1,1} (1+\hbar^2) + a_{n-1,1} (\hbar + \hbar^2 + \hbar^3) + a_{n,1} (\hbar + \hbar^2 + \hbar^3)$,
        \item $\tb_5|_p = a_{1,1} + a_{n-1,1} \hbar + a_{n,1} (1+\hbar^2)$,
        \item $\tb_6|_p = a_{1,1} + a_{n-1,1} (1+\hbar^2) + a_{n,1} \hbar$.
    \end{itemize}
\end{example}

\begin{definition}
    A reverse plane partition over a poset $\pst$ is a function $\pi:\pst \to \mathbb{N}$ such that $x \leq y \implies f(y) \leq f(x)$. If $\pst$ is colored by a coloring function $c:\pst \to I$, then we define the degree of a reverse plane partition $\pi$ to be
    \[
        \deg(\pi):=\left(\sum_{x \in c^{-1}(i)} \pi(x) \right)_{i \in I} \in \NN^{I}.
    \]
\end{definition}
Let $\rpp(\pst)$ be the set of reverse plane partitions over $\pst$ and, if a coloring of $\pst$ is provided, let $\rpp^{\bs d}(\pst)$ be the set of degree $\bs d$ reverse plane partitions over $\pst$.
\begin{proposition}\label{fixedqm}
    Let $p \in \mc{M}^{\msf{T}}$. Then
    \[
        \left(\msf{QM}_{\text{ns}, \infty}^{\bs d}\right)^{\msf{T} \times \CC^{\times}_{q}}= \bigoplus_{p \in \mc{M}^{\msf{T}}} \left(\msf{QM}_{\text{ns},\infty,p}^{\bs d}\right)^{\msf{T}\times \mathbb{C}^{\times}_{q}}
    \]
    where
    \[
        \msf{QM}_{\text{ns},\infty,p}^{\bs d}=\{f \in \msf{QM}_{\text{ns},\infty}^{\bs d} \, \mid \, f(\infty)=p\}.
    \]
    Furthermore, $\left(\msf{QM}_{\text{ns},\infty,p}^{\bs d}\right)^{\msf{T}\times \mathbb{C}^{\times}_{q}}$ is in canonical bijection with $\rpp^{\bs d}(\pst_{p})$.
\end{proposition}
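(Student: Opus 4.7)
The decomposition \[(\msf{QM}_{\text{ns},\infty}^{\bs d})^{\msf T \times \CC^\times_q} = \bigsqcup_{p \in \mc M^{\msf T}} (\msf{QM}_{\text{ns},\infty,p}^{\bs d})^{\msf T \times \CC^\times_q}\] is essentially formal: $\ev_\infty$ is $\msf T$-equivariant, and since $\infty \in \PP^1$ is $\CC^\times_q$-fixed the induced action of $\CC^\times_q$ on $\ev_\infty(f)$ is trivial. Hence a $\msf T \times \CC^\times_q$-fixed quasimap nonsingular at $\infty$ must send $\infty$ into $\mc M^{\msf T}$, and the summands of the decomposition are simply the preimages of the individual fixed points under $\ev_\infty$.

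To construct the bijection with $\rpp^{\bs d}(\pst_p)$, the plan is to unpack a $\msf T \times \CC^\times_q$-fixed quasimap as in Section~4 of \cite{okounkov2017lectures}: it consists of $\CC^\times_q \times \msf T$-equivariant vector bundles $\mc V_i$ on $\PP^1$ with $\deg \mc V_i = \bs d_i$, topologically trivial $\mc W_i$, and equivariant sections $X_e, Y_e, A_i, B_i$. An equivariant version of Grothendieck's splitting theorem decomposes each $\mc V_i$ uniquely as a sum of equivariant line bundles. Nonsingularity at $\infty$ with $f(\infty) = p$ forces the $\msf T$-character of $\mc V_i|_\infty$ to equal $\tb_i|_p = \sum_{x \in \col_p^{-1}(i)} \weight(x)$ by Equation~\eqref{tbweights}; since this character is multiplicity-free, its weights are canonically indexed by $\col_p^{-1}(i) \subset \pst_p$. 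I would use this to produce the canonical labeling $\mc V_i = \bigoplus_{x \in \col_p^{-1}(i)} \mc L_x$ where $\mc L_x$ has $\msf T$-weight $\weight(x)$ at $\infty$. Setting $\pi(x) := \deg \mc L_x \in \ZZ$, the constraint $\sum_{x \in \col_p^{-1}(i)} \pi(x) = \bs d_i$ is precisely $\deg \pi = \bs d$.

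The main technical step is to extract the reverse plane partition inequalities from the requirement that the equivariant quasimap restricts at $\infty$ to the quiver representation of $p$. For each covering relation $x \leq y$ in $\pst_p$, Section~\ref{sec:stable-quiver-rep} pins down a specific nonzero matrix entry of some $X_e$ or $Y_e$ at $p$. Globally over $\PP^1$ this entry is an equivariant global section of a line bundle of degree $\pi(x) - \pi(y)$; such a section with the prescribed $\CC^\times_q$-weight is nonzero at $\infty$ only when the degree is nonnegative, giving $\pi(x) \geq \pi(y)$. Nonnegativity of $\pi$ itself is forced at the minimal element(s) of $\pst_p$ by the analogous analysis of the framing section $A$ at $\infty$ and then propagates upward by the order-reversing property. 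For the inverse, I would take $\pi \in \rpp^{\bs d}(\pst_p)$ to the quasimap with $\mc V_i = \bigoplus_x \mc O(\pi(x)) \otimes \weight(x)$ and each nonzero block of $X_e, Y_e, A, B$ defined as the unique equivariant global section of the appropriate $\mc O(\pi(x) - \pi(y))$ scaled to match $p$ at $\infty$; stability and the moment map equations hold generically over $\PP^1$ since they hold at $p$, and openness of both conditions promotes this to a stable quasimap.

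I expect the main obstacle to be careful bookkeeping of the $\CC^\times_q$-weights and the directions of the quiver arrows so that the inequality really emerges as $\pi(x) \geq \pi(y)$ (rather than the reverse), and to ensure that the equivariant nonsingularity at $\infty$ imposes no additional constraints on $\pi$ beyond the RPP conditions. Once these sign conventions are fixed, the two assignments $f \mapsto \pi$ and $\pi \mapsto f$ are manifestly mutually inverse and canonical.
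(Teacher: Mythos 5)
Your strategy coincides with the paper's: the paper handles the first claim in one line (using that $\infty$ is a nonsingular $\CC^{\times}_{q}$-fixed point) and then cites Section~5.1.3 of \cite{dinkinsthesis} for the second, and what you have written is a sketch of exactly that standard argument (equivariant splitting of the $\mc{V}_{i}$, multiplicity-freeness of the character $\tb_{i}|_{p}$ to index the line-bundle summands by $\col_{p}^{-1}(i)$, and degree bookkeeping on the matrix entries). Two points deserve attention. First, the inequality direction, which you yourself flag as the main obstacle, comes out opposite to what you wrote: in the paper's construction of the representation of $p$, for every covering relation $x \leq y$ the corresponding matrix entry (of $X_{e}$ or of $Y_{e}$, depending on the orientation of the arrow) sends $e_{x}$ to $e_{y}$, so it is a section of $\Hom(\mc{L}_{x},\mc{L}_{y})$, of degree $\pi(y)-\pi(x)$ rather than $\pi(x)-\pi(y)$. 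Nonvanishing at $\infty$ therefore forces $\pi(x)\leq \pi(y)$, i.e.\ $\pi$ is order-\emph{preserving} on $\pst_{p}$ with nonnegativity imposed at the minimal (framing) element. This agrees with the nonvanishing support of the localization formula in Theorem~\ref{verformula} and with the usual convention that reverse plane partitions weakly increase, but it is the opposite of the inequality as literally written in the paper's definition of $\rpp(\pst)$; one of the two conventions must be flipped, and deriving $\pi(x)\geq\pi(y)$ from a bundle of degree $\pi(x)-\pi(y)$ is not the correct resolution.

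Second, your two assignments are not quite ``manifestly mutually inverse.'' To conclude that $f$ is recovered from $\pi$ you must show a fixed quasimap is \emph{determined} by the degrees $\pi(x)$: every matrix entry of $(X,Y,A,B)$ not corresponding to a covering relation or to the framing must vanish identically, and the remaining entries must be unique up to the residual gauge. This uses that each entry is an eigensection of prescribed $\CC^{\times}_{q}$-weight in a line bundle whose invariant sections form a space of dimension at most one, together with the moment map equations; it is the substantive content of the cited argument and should be spelled out rather than asserted. Your remaining steps are fine: the moment map condition is closed, so generic vanishing on $\PP^{1}$ implies identical vanishing, and stability is open, so it propagates from a single point.
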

\begin{proof}
    The first claim follows from the fact that $\infty$ is a nonsingular point fixed by $\CC^{\times}_{q}$. The second claim is standard, and the argument is identical to that in Section~5.1.3 of \cite{dinkinsthesis}.
\end{proof}

The class $\tau$ can be viewed as a function of variables $x_{i,j}$ for $i \in Q_0$ and $1 \leq j \leq \msf{w}_{i}$ which is symmetric in $x_{i,1},\ldots,x_{i,\msf{w}_{i}}$ for each $i$ separately. Choose some bijection between $\{x_{i,j}\}_{1 \leq j \leq \msf{w}_{i}}$ and $\col_{p}^{-1}(i)$ for each $i$ so that we can view $\tau$ as a function of the elements of $\pst_{p}$. For $\pi \in \rpp(\pst_{p})$, let $\tau(\pi):=\tau|_{x=\weight(x) q^{\pi(x)}}$.

This is sufficient information to compute the vertex function $V^{(\tau)}|_{p}$ by localization. The proof is analogous to the computations of \cite{dinkinssmirnov2020quasimaps} and \cite{psz2020baxterquantum} since the only type $D$ specific ingredients are given by the poset arising in Proposition~\ref{fixedqm}. See also Proposition~1 of \cite{afo2018quantumlanglands}.

\begin{theorem}\label{verformula}
    The restriction of the vertex function of $\mc{M}$ to a point $p \in \mc{M}^{\msf{T}}$ is given by the following formula:
    \begin{multline*}
        V^{(\tau)}(\bs z)|_{p}=\sum_{\pi \in \rpp(\pst_{p})} \left(-\frac{q}{\hbar^{1/2}}\right)^{N(\pi)} \tau(\pi) \bs{z}^{\deg (\pi)}   \\
        \cdot  \left(\prod_{i \in Q_0} \prod_{\substack{x \in \col_{p}^{-1}(i) \\ 1 \leq j \leq \msf{w}_{i}}} \frac{\left( \hbar \frac{\weight(x)}{a_{i,j}}\right)_{\pi(x)}}{\left( q \frac{\weight(x)}{a_{i,j}}\right)_{\pi(x)}} \right)  \left( \prod_{e \in Q_1} \prod_{\substack{ x \in \col_{p}^{-1}(o(e)) \\ y \in \col_{p}^{-1}(i(e))}} \frac{\left(\hbar \frac{\weight(y)}{\weight(x)}\right)_{\pi(y)-\pi(x)}}{{\left(q \frac{\weight(y)}{\weight(x)}\right)_{\pi(y)-\pi(x)}}}\right) \\
        \cdot \left(\prod_{i \in Q_0} \prod_{x,y \in \col_{p}^{-1}(i)} \frac{\left(q \frac{\weight(y)}{\weight(x)}\right)_{\pi(y)-\pi(x)}}{\left(\hbar \frac{\weight(y)}{\weight(x)}\right)_{\pi(y)-\pi(x)}} \right),
    \end{multline*}
    where $N(\pi)=\sum_{e \in Q_{1}} \left(\msf{v}_{o(e)} \deg(\pi)_{i(e)}- \msf{v}_{i(e)} \deg(\pi)_{o(e)}\right) + \sum_{i \in Q_{0}} \msf{w}_{i} \deg(\pi)_{i}$.
\end{theorem}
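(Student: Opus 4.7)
The plan is to apply the K-theoretic virtual localization theorem to the proper evaluation map $\ev_\infty: \left(\msf{QM}^{\bs d}_{\text{ns},\infty}\right)^{\CC^\times_q} \to \mc{M}$, restricted to the fixed point $p \in \mc{M}^{\msf T}$. By Proposition~\ref{fixedqm}, the $\msf T \times \CC^\times_q$-fixed components of $\msf{QM}^{\bs d}_{\text{ns},\infty,p}$ are isolated and in canonical bijection with $\rpp^{\bs d}(\pst_p)$. Concretely, a reverse plane partition $\pi$ corresponds to the quasimap whose tautological bundles split as
\[
\mc{V}_i \;=\; \bigoplus_{x \in \col_p^{-1}(i)} \mc{O}_{\PP^1}(-\pi(x)) \otimes \weight(x),
\]
with the section of the appropriate bundle on $\PP^1$ induced by the quiver representation at $p$ described in Section~\ref{sec:stable-quiver-rep}. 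Localization then expresses $V^{(\tau)}(\bs z)|_p$ as a sum over $\pi$, with summand equal to $\ev_0^*(\tau)|_\pi \cdot \bs z^{\deg(\pi)}$ divided by the symmetrized K-theoretic Euler class of the virtual normal bundle.

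Next, I would compute the character of the virtual tangent at each fixed quasimap using the polarization~\eqref{polarization}. Each summand of $\pol$ contributes an $H^\bullet(\PP^1, -)$ built from a line bundle $\mc{O}(\pi(y)-\pi(x))$ twisted by $\weight(y)/\weight(x)$ (or by $\weight(x)/a_{i,j}$ for the framing terms), and similarly the $\End(V_i)$ piece enters with opposite sign. Applying the symmetrized Euler class $\hat{a}([d+1]_q \cdot w) = \varphi(\hbar w)/\varphi(qw) \cdot \text{(finite correction)}$ to each of these contributions converts $\chi(\PP^1, \mc{O}(\pi(y)-\pi(x)) \otimes w)$ into the $q$-Pochhammer ratio $(\hbar w)_{\pi(y)-\pi(x)}/(q w)_{\pi(y)-\pi(x)}$, using the sign conventions in the definition of $(x)_d$ for negative $d$. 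Summing these factorwise over edges, framing vertices, and gauge $\End$-corrections produces exactly the three displayed products in the theorem; the character $\tau(\pi)$ arises by restricting $\tau \in K_{\msf T \times \CC^\times_q}(\mf M)$ along $\ev_0$, which sends the tautological Chern roots to $\weight(x)q^{\pi(x)}$ under the identification of variables $x_{i,j}$ with $\col_p^{-1}(i)$.

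The prefactor $(-q/\hbar^{1/2})^{N(\pi)}$ comes from the twist relating $\hat{\mc O}_{\text{vir}}^{\bs d}$ to $\mc O_{\text{vir}}^{\bs d}$ by a square root of the canonical line bundle $\kappa = \det(\pol^{\vee})$ of the polarization, as in Section~6 of \cite{okounkov2017lectures}. Its character at the fixed quasimap associated to $\pi$ is computed directly from~\eqref{polarization} by tracking degrees: each Hom between tautological bundles contributes $\msf v_{o(e)} \deg(\pi)_{i(e)} - \msf v_{i(e)} \deg(\pi)_{o(e)}$ to the exponent of $q/\hbar^{1/2}$, while each framing Hom contributes $\msf w_i \deg(\pi)_i$. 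This yields precisely $N(\pi)$, with the sign $(-1)^{N(\pi)}$ coming from the standard relation between $\hat a$ and the symmetrized determinant.

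The main obstacle is bookkeeping rather than conceptual depth: one must carefully track signs in the virtual tangent decomposition, the split between positive and negative powers of $q$-Pochhammers as $\pi(y)-\pi(x)$ changes sign (handled by~\eqref{qpochinv}), and the consistency between the polarization-induced twist and the prefactor $N(\pi)$. Since the argument is formally identical to the type $A$ and hypertoric cases in \cite{dinkinssmirnov2020quasimaps} and \cite{psz2020baxterquantum}, with the only type $D$ specific input being the combinatorial description of the fixed quasimaps via the posets $\pst_p$, the proof reduces to verifying that the general localization formula for ADE quivers specializes correctly after inserting the weight data~\eqref{tbweights}.
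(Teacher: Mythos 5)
Your proposal is correct and follows essentially the same route as the paper, which simply observes that the localization computation is identical to the type $A$ and hypertoric cases of \cite{dinkinssmirnov2020quasimaps} and \cite{psz2020baxterquantum} once the fixed quasimaps are identified with reverse plane partitions over $\pst_{p}$ via Proposition~\ref{fixedqm}. Your write-up is in fact more detailed than the paper's, spelling out the splitting of the tautological bundles at a fixed quasimap, the conversion of $\chi(\PP^1,-)$ contributions into $q$-Pochhammer ratios via the symmetrized Euler class, and the origin of the prefactor $(-q/\hbar^{1/2})^{N(\pi)}$ from the polarization twist.
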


    The quantity $N(\pi)$ can also be written as
    \[
        N(\pi)=\sum_{j \in Q_{0}} b_{j} \deg(\pi)_{j}.
    \]
    where
    \[
b_j=\left(\sum_{\substack{e \in Q_{1} \\ i(e)=j}} \msf{v}_{o(e)}- \sum_{\substack{e \in Q_{1} \\ o(e)=j}} \msf{v}_{i(e)} +\msf{w}_{j}\right)
    \]
    
We define $\msV^{(\tau)}(\bs z)=V^{(\tau)}(\bs z)|_{z_{i}=(-\hbar^{-1/2})^{b_i} z_{i}}$. This is the same shift appearing in the pole subtraction property of \cite{aganagic2021elliptic} and the 3d mirror symmetry of vertex functions of \cite{bottadink}.

\begin{example}
    For concreteness, consider the simplest example of $Q = D_1$, $\msf v = (1)$, $\msf w = (1)$. For this example, we have
    \[
        \msV^{(1)}(\bs z) = \sum_{k \geq 0} \frac{(\hbar)_k}{(q)_k} \left(\frac{q}{\hbar} z_1\right)^k = F\left(\frac{q}{\hbar} z_1\right),
    \]
    which is a product of $q$-binomial series.
\end{example}

While Theorem~\ref{verformula} is entirely algorithmic, it is combinatorially complicated. Nevertheless, we will show that there is a beautiful summation formula for it, providing a vast generalization of the $q$-binomial theorem, when $|\msf{w}|=1$.

\subsection{Quasimaps to type \texorpdfstring{$D$}{D} points}

Now assume further that $\msf{w}=\delta_{k}$ where $k \in \{1,n-1,n\}$. Let $\mathsf{v}$ be such that the quiver variety $\mc M$ is nonempty, which by Proposition~\ref{minframing} means that it must be a single point. Furthermore, the torus acting on $\mc{M}$ is $\CC^{\times} \times \CC^{\times}_{\hbar}$. Since the diagonal subtorus of the framing torus always acts trivially, we can ignore the first factor and define $\msf T = \CC_{\hbar}^\times$.

So $K_{\msf{T}}(\mc{M})=K_{\msf{T}}(pt)=\ZZ[\hbar^{\pm 1}]$, and hence the vertex function is an element of $K_{\msf{T} \times \CC^{\times}_{q}}(\mc{M})_{loc}[[z]]=\QQ(q,\hbar)[[z_{i}]]_{i \in Q_0}$.

The moduli space $\msf{QM}_{\mathrm{ns},\infty}^{\bs d}$ was defined for arbitrary Nakajima quiver varieties in Section~\ref{subsec:qmdef}. Intuitively, $\msf{QM}_{\mathrm{ns},\infty}^{\bs d}$ is supposed to be a moduli space of maps from $\PP^{1}$ to the quiver variety $\mc{M}$. Nevertheless, even in the case when $\mc{M}$ is a single point, we can spell out the definition of quasimaps to see that $\msf{QM}_{\mathrm{ns},\infty}^{\bs d}$ is a nontrivial space.
\begin{proposition}
    Suppose that $\msf{w}=\delta_{k}$ where $k \in \{1,n-1,n\}$. The moduli space $\msf{QM}_{\text{ns},\infty}^{\bs d}$ is the stack classifying the following data:
    \begin{itemize}
        \item rank $\msf v_i$ vector bundles $\mc{V}_i$ over $\PP^1$ of degrees $\deg(\mc{V}_i)=d_i$ for $i \in Q_0$,
        \item a stable section $s$ of the vector bundle $\mathscr{P} \oplus \mathscr{P}^*$ with
              \[
                  \mathscr{P} = \mc{V}_k \oplus \left(\bigoplus_{i \leq n-2} \Hom\left(\mc{V}_i, \mc{V}_{i+1}\right)\right) \oplus \Hom\left(\mc{V}_{n-2}, \mc{V}_n\right),
              \]
    \end{itemize}
    which is non-singular at $\infty \in \mathbb{P}^1$ and satisfies the moment map equations.
\end{proposition}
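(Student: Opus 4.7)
The proof will proceed by unwinding the general definition of the quasimap stack given in Section~\ref{subsec:qmdef} in the specific setting of this proposition.

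First, I would recall that by definition, a point of $\msf{QM}^{\bs d}$ is a morphism $f: \PP^1 \to \mf{M} = [\mu^{-1}(0)/G_{\msf v}]$, which by the associated-bundle construction is the same as a $G_{\msf v}$-torsor $\mathcal{G}$ on $\PP^1$ together with a $G_{\msf v}$-equivariant map to $T^*\Rep_Q(\msf v, \msf w)$ landing in $\mu^{-1}(0)$. The torsor $\mathcal{G}$ is equivalent to a tuple of rank-$\msf v_i$ vector bundles $\mc V_i$ on $\PP^1$, and the degree vector $\bs d$ is by definition $(\deg \mc V_i)_{i \in Q_0}$. The framing spaces $W_i$ give rise to bundles $\mc W_i$; since in the quasimap setup the framing data has no moduli and the framing torus acts trivially, these are topologically trivial, so in our case $\mc W_k \cong \mc O_{\PP^1}$ and $\mc W_i = 0$ for $i \ne k$.

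Second, I would unpack the equivariant map into a section. Using the decomposition \eqref{T*rep}, such a section is the data of $(X_e, Y_e, A_i, B_i)$ where $X_e \in H^0(\PP^1, \Hom(\mc V_{o(e)}, \mc V_{i(e)}))$, $A_i \in H^0(\PP^1, \Hom(\mc W_i, \mc V_i))$, and $Y_e, B_i$ are sections of the corresponding dual bundles under the trace pairing. For $\msf w = \delta_k$, only $A_k$ is nonzero and lies in $H^0(\PP^1, \mc V_k)$, and only $B_k$ is nonzero and lies in $H^0(\PP^1, \mc V_k^*)$. Grouping the $X_e$ and $A_k$ data according to the polarization choice \eqref{polarization} yields precisely the bundle
\[
    \mathscr{P} = \mc{V}_k \oplus \left(\bigoplus_{i \leq n-2} \Hom(\mc V_i, \mc V_{i+1})\right) \oplus \Hom(\mc V_{n-2}, \mc V_n),
\]
and the trace-dual data $(Y_e, B_k)$ is then exactly a section of $\mathscr{P}^*$. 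Thus the total section data is a single section $s$ of $\mathscr{P} \oplus \mathscr{P}^*$.

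Third, I would import the extra conditions defining $\msf{QM}^{\bs d}_{\mathrm{ns},\infty}$. The moment-map constraint from Definition~\ref{nqv}, written componentwise in \eqref{eq:moment-map}, translates directly into the requirement that $s$ satisfies the moment map equations pointwise. Stability of the quasimap means, by definition, that $f(p) \in \mc M$ (equivalently, that Proposition~\ref{stability} holds at the point $p$) for all but finitely many $p \in \PP^1$; the additional condition that $f$ be nonsingular at $\infty$ means exactly that $\infty$ is not among these finitely many exceptional points.

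The only mildly subtle step is the identification of the $(Y,B)$-data with a section of $\mathscr{P}^*$ via the trace pairing, together with verifying that the framing line bundle $\mc W_k$ is topologically trivial; but the former is built into \eqref{T*rep} and the latter is standard in quasimap theory, so no genuine obstacle arises and the proposition follows from this bookkeeping.
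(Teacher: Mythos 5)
Your proposal is correct and is essentially the argument the paper intends: the proposition is a direct unwinding of the quasimap definition (the paper itself gives no separate proof, deferring to Section~4 of \cite{okounkov2017lectures}), and your identification of the torsor with the tuple $(\mc V_i)$, of the equivariant map with a section of the associated bundle of $T^*\Rep_Q(\msf v,\msf w)=\mathscr P\oplus\mathscr P^*$, and of the stability, nonsingularity, and moment-map conditions all match. One small point of phrasing: $\mathscr P$ is the associated bundle of $\Rep_Q(\msf v,\msf w)$ itself rather than of the polarization class \eqref{polarization} (which carries an extra $-\sum_i\Hom(\tb_i,\tb_i)$ term accounting for the $G_{\msf v}$-quotient), though the splitting of the section data into $(X,A)$ versus $(Y,B)$ is indeed the one the polarization reflects.
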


In the previous proposition, stability of the section $s$ means the following. For each $p \in \mathbb{P}^{1}$, one can view $s(p)$ as an element of $T^{*}\Rep_{Q}(\msf{v},\msf{w})$. Then the section $s$ is stable if and only if $s(p)$ is stable in the sense of Proposition~\ref{stability} for all but finitely many $p$.

Thus in this case, the vertex function $V^{(\tau)}(\bs z)$ is the generating function for the equivariant Euler characteristics of certain $K$-theory classes on an interesting moduli space.

\section{Vertex functions for zero-dimensional varieties}\label{sec:main-thm}

Our main result is a product formula for the vertex function for zero-dimensional type $D$ Nakajima quiver varieties with one framing at a minuscule vertex. To write our formula, we need some notation.

Fix the quiver $Q=D_{n}$. The associated root system can be constructed as follows. Let $\eps_{i}$ for $1 \leq i \leq n$ be an orthonormal basis of an $n$-dimensional Euclidean space. The simple roots of the $D_{n}$ root system are constructed as $\alpha_{i}=\eps_{i}-\eps_{i+1}$ for $1 \leq i \leq n-1$ and $\alpha_{n}=\eps_{n-1}+\eps_{n}$. The set of all roots is $\Phi=\{\epsilon_{i}\pm \epsilon_{j} \, \mid \, 1 \leq i < j \leq n\}$. It is a disjoint union of positive and negative roots: $\Phi=\Phi^{+} \sqcup \Phi^{-}$.

Fix $\msf{v}$ and $\msf{w}$ such that $\msf{w}_{i}=1$ for a single minuscule vertex $i$ and $\msf{w}_{j}=0$ for $j\neq i$. Assume that the quiver variety $\mc{M}(\msf{v},\msf{w})$ is nonempty.

We identify the group algebra of the root lattice with the K\"ahler variables by the rule
\[
e^{\alpha_{j}}=\left(\frac{q}{\hbar}\right)^{a_{j}} z_{j}, \quad a_j=\msf{v}_{j}-\sum_{\substack{e \\ o(e)=j}} \msf{v}_{i(e)}.
\]
Note that $a_i$ is the $i$th component of $\msf{v}- A_{Q} \msf{v}$, where $A_{Q}$ is the adjacency matrix of the $D_{n}$ quiver for our chosen orientation. Let $\omega_{i}$ for $1 \leq i \leq n$ be the fundamental weights.

As is standard when working with quiver varieties, see \cite{nakajima1994instantons}, we view $\msf{w}$ and $\msf{v}$ as encoding two weights, $\lambda$ and $\mu$, by the rules $\lambda=\sum_{i} \msf{w}_{i} \omega_{i}$ and $\mu=\lambda-\sum_{i} \msf{v}_{i} \alpha_{i}$.

\begin{theorem}\label{thm:product-identity-d}
   The vertex function $\msV(\bs z)$ of $\mc{M}(\msf v, \msf w)$ satisfies the following identity:
    \[
        \msV(\bs z) = \prod_{\alpha \in \Phi_{\mu}^{+}} F(e^{\alpha}),
    \]
    where $\Phi_{\mu}^{+}= \{\alpha \in \Phi^{+} \mid (\mu, \alpha) < 0\}$.
\end{theorem}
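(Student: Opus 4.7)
The plan is to start from the explicit localization formula in Theorem~\ref{verformula}. Since $\mc{M}(\msf{v},\msf{w})$ is a single point with a unique torus fixed point $p$, the vertex function reduces to a generating sum over reverse plane partitions $\pi \in \rpp(\pst_p)$ on the poset $\pst_p$ described combinatorially in Section~\ref{sec:stable-quiver-rep}. The first task is to reorganize this sum by grouping the values of $\pi$ along chains of $\pst_p$ into partitions $\lambda^{(k)}$, one for each vertex $k \in Q_0$. In both the spin ($\msf{w}=\delta_n$) and fundamental ($\msf{w}=\delta_1$) cases, the posets of Definitions~\ref{dspinposet} and~\ref{dfundposet} naturally organize into a sequence of interlacing partitions with a fork at one end.

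The second step is to recognize that the products of $q$-Pochhammer symbols in Theorem~\ref{verformula} coincide, after appropriate normalization by the factors $b_\lambda$ of \eqref{macnorm}, with the branching coefficients $\varphi_{\lambda/\mu}$ and $\psi_{\lambda/\mu}$ from \eqref{phipsi}, specialized to single variables via \eqref{onevar}. This reexpresses $\msV(\bs z)$ as a sum of the form $\sum_{\lambda^{(\bullet)}} \prod_k Q_{\lambda^{(k)}/\lambda^{(k-1)}}(x_k) \, P_{\lambda^{(k)}/\lambda^{(k+1)}}(y_k)$ at single-variable specializations determined by $\bs z$. In the spin case, the even-part function $\mathcal{E}_\lambda$ from \eqref{evenpart}, together with its one-variable reduction \eqref{evenpart-single}, captures the extra symmetry imposed by the two spin nodes at the fork of the Dynkin diagram.

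The third step is to apply the Cauchy identity \eqref{eq:cauchy} iteratively, sweeping across the quiver from one end to the other. Each iteration collapses one summation index $\lambda^{(k)}$, using $\sum_\lambda P_\lambda(\bs x)Q_\lambda(\bs y) = \Pi(\bs x, \bs y)$ and the branching rule \eqref{branching}, and produces a factor of $\prod_{i,j} F(x_i y_j)$ where each $F$-function is the $q$-binomial series via the infinite product formula. For the spin case, the fork at the end of the $D_n$ diagram requires instead the half-space variant of the Cauchy identity of \cite{bbc2020halfspacemacdonald}, whose combinatorial input is exactly the even-part function $\mathcal{E}_\lambda$. Finally, each pair of variables $(x_i, y_j)$ that survives the iteration corresponds to $e^\alpha$ for a positive root $\alpha \in \Phi^+$, and one verifies that the surviving pairs are precisely those with $(\mu,\alpha) < 0$.

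The main obstacle will be the spin case, where applying the half-space Cauchy identity correctly (so that the resulting factors line up with $\Phi^+_\mu$ rather than all of $\Phi^+$) is delicate. The interlacing combinatorics at the fork forces the partitions at the two spin nodes to satisfy the even-transpose condition built into $\mathcal{E}_\lambda$, and one must check that the half-space machinery converts this condition into precisely the factors $F(e^{\epsilon_i + \epsilon_j})$ for $i<j$. Once this is established, verifying the final identification is largely bookkeeping: the positive roots of $D_n$ are $\epsilon_i \pm \epsilon_j$ for $i<j$, and the condition $(\mu, \alpha) < 0$ cuts out precisely the roots not cancelled by the $q$-Pochhammer inversion \eqref{qpochinv}, matching the surviving Cauchy factors with $\prod_{\alpha \in \Phi^+_\mu} F(e^\alpha)$.
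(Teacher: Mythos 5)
Your proposal follows essentially the same route as the paper: localization formula, reorganization into interlacing partitions, identification of the Pochhammer products with skew Macdonald branching coefficients at single-variable specializations, iterated (half-space) Cauchy identities with the even-part function $\mathcal{E}_\lambda$ handling the fork in the spin case, and a final matching of the surviving factors with $\Phi^+_\mu$. The only notable bookkeeping difference is that the paper interleaves the data at the two spin nodes into a single partition of length $\msf{v}_{n-2}+\msf{v}_{n-1}$ (rather than one partition per vertex) so that the half-space machinery of \cite{bbc2020halfspacemacdonald} applies verbatim, and it handles the fundamental-node case by first reducing to $\msf{v}=(2^{n-2},1,1)$ and $\msf{v}=(1,2^{n-3},1,1)$ before applying the Pieri rule, inversion, and the ordinary Cauchy identity.
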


We will prove the theorem by considering the cases $\msf{w}=\delta_{i}$ for $i \in \{1,n-1,n\}$ separately. By symmetry, the case of $\msf{w}=\delta_{n-1}$ is identical to the case of $\msf{w}=\delta_{n}$, and so we will only consider the latter below. The power for the proof comes from the Cauchy identity for Macdonald polynomials. 

\subsection{Relation with 3d mirror symmetry} 

Theorem~\ref{thm:product-identity-d} can be viewed as a special case of 3d mirror symmetry for vertex functions. In general, 3d mirror symmetry relates vertex functions of a variety with those of its 3d mirror dual. For a precise statement when all objects are defined, see \cite{bottadink}. 

Let $\mc{M}^{!}$ denote the variety 3d mirror dual to $\mc{M}(\msf{v},\msf{w})$. It can be constructed as a resolution of a slice in an affine Grassmannian. It is shown in \cite{krylovperunov2021almost} that it is isomorphic to an affine space equipped with a torus action such that $0$ is the unique fixed point. Since quasimaps are only defined when the target is presented as a GIT quotient, there is at present no known definition of vertex functions of $\mc{M}^{!}$. Nevertheless, since $\mc{M}(\msf{v},\msf{w})$ is zero-dimensional, it is expected that the vertex function of $\mc{M}^{!}$ is trivial. This is a special case of Conjecture~1 of \cite{dinkinssmirnov2020characters}, which claims that the vertex functions of zero-dimensional quiver varieties factorize into a product of $q$-binomials, each of which corresponds to a repelling weight space in the tangent space $T_{0} \mc{M}^{!}$.

Remark~5.15 of \cite{krylovperunov2021almost} establishes that set $\Phi_{\mu}^{+}$ is exactly the set of repelling weights of $T_0 \mc M^!$. Thus the factorization in Theorem~\ref{thm:product-identity-d} can also be viewed as a proof of the $3d$ mirror symmetry for vertex functions of these type $D$ Nakajima varieties. 

Furthermore, the results of \cite{krylovperunov2021almost} combined with \cite{okounkov2017lectures} Lemma 7.3.5 confirms Conjecture~1 of \cite{dinkinssmirnov2020characters} for any type $D$ quiver variety with framings only at minuscule nodes. See \cite{quasimaphikita} for more details.

\begin{remark}
    Theorem~\ref{thm:product-identity-d}, along with our method of proof, allows one to write explicit formulas for $V^{(\tau)}(\bs z)$, the vertex with descendant $\tau$, whenever $\tau$ is in the subalgebra generated by exterior powers of tautological vector bundles and their duals. In fact, Lemmas \ref{lem:lhs-identification}, \ref{verequalsmac}, and \ref{verequalsmac2} below express the vertex functions in terms of Macdonald polynomials. One immediately sees that applying various Macdonald difference operators to the expressions in these lemmas gives vertex functions with these descendants. Applying the same difference operator to the right hand side of Theorem~\ref{thm:product-identity-d} and using the identity $F(x q)=\left(\frac{1-x}{1-\hbar x}\right)F(x)$ then gives an explicit rational formula for $V^{(\tau)}(\bs z)/V^{(1)}(\bs z)$. Up to global scalar, this ratio is exactly the capped vertex function with descendant. See \cite{dinkinssmirnov2020capped} where this was carried out in the type $A$ setting.
\end{remark}

Let us conclude this section with two examples of the main theorem.

\begin{example}
    The smallest nontrivial example of Theorem~\ref{thm:product-identity-d} is $Q=D_{4}$ with $\msf v = (1,2,1,1)$ and $\msf w = (0,0,0,1)$. Using the localization formula and a computer, one can see that
    \begin{equation}\label{thmexample}
        \msV(\bs z) = F( z_2) \cdot F( (q/
        \hbar) z_2 z_3) \cdot F((\hbar/q) z_1 z_2) \cdot F( z_1 z_2 z_3) \cdot F((q/\hbar) z_1 z_2^2 z_3 z_4).
    \end{equation}
    Even in this example, this identity is nontrivial; in particular, there is no bijection matching reverse plane partitions over the corresponding poset with the terms in the summation on the right hand side.
    To match the root-theoretic description, we calculate
    \[\mu = \omega_4 - (\alpha_1 + 2 \alpha_2 + \alpha_3 + \alpha_4) =\omega_4 - \omega_2.
    \]
    Using the fact that $(\omega_i, \alpha_j) = \delta_{ij}$, one can compute $(\mu, \alpha) = (\omega_4 - \omega_2, \alpha)$ for each $\alpha \in \Phi$. We find that
    \begin{align*}
        (\mu, \alpha_2)=-1,                                \\
        (\mu, \alpha_2+\alpha_3)=-1,                       \\
        (\mu, \alpha_1+\alpha_2)=-1,                       \\
        (\mu, \alpha_1+\alpha_2+\alpha_3)=-1,              \\
        (\mu, \alpha_1+2 \alpha_2+\alpha_3+\alpha_{4})=-1,
    \end{align*}
    and $(\mu, \alpha)\geq 0$ otherwise. Furthermore, $\msf{v}-A_{Q} \msf{v}=(-1,0,1,1)$. This confirms Theorem~\ref{thm:product-identity-d}.
\end{example}

\begin{example}
    Let us write a nontrivial example of Theorem~\ref{thm:product-identity-d} when the framing is at the first node. For example, take $Q=D_5$, $\msf v = (2,2,2,1,1)$, $\msf{w}=(1,0,0,0,0)$. Again using the localization formula, one can experimentally find that
    \begin{align} \nonumber
        \msV(\bs z) & = F( z_1) F( z_1 z_2) F(z_1 z_2 z_3) F((q/\hbar)z_1 z_2 z_3 z_4) F((q/
        \hbar)z_1 z_2 z_3 z_5)    \\
                 & F((q/\hbar)^{2} z_1 z_2 z_3 z_4 z_5) F((q/\hbar)^{2} z_1 z_2 z_3^2 z_4 z_5) F((q/\hbar)^{2} z_1 z_2^2 z_3^2 z_4 z_5),
        \label{thmexample2}
    \end{align}
    To match with the root-theoretic description, we calculate
    \[
        \mu = \omega_1 - (2 \alpha_1 + 2 \alpha_2 + 2 \alpha_3 + \alpha_4 + \alpha_5) =-\omega_1.
    \]
    We find that
    \begin{align*}
        (\mu,\alpha_1)=-1,                                               \\
        (\mu,\alpha_1+\alpha_2)=-1,                                      \\
        (\mu,\alpha_1 + \alpha_2 + \alpha_3)=-1,                         \\
        (\mu,\alpha_1 + \alpha_2 + \alpha_3 + \alpha_4)=-1,              \\
        (\mu,\alpha_1 + \alpha_2 + \alpha_3 + \alpha_5)=-1,              \\
        (\mu,\alpha_1 + \alpha_2 + \alpha_3 + \alpha_4 + \alpha_5)=-1,   \\
        (\mu,\alpha_1 + \alpha_2 + 2 \alpha_3 + \alpha_4 + \alpha_5)=-1, \\
        (\mu,\alpha_1 + 2 \alpha_2 + 2 \alpha_3 + \alpha_4 + \alpha_5)=-1,
    \end{align*}
    and that $(\mu, \alpha)\geq 0$ otherwise. Furthermore, $\msf{v}-A_{Q} \msf{v}=(0,0,0,1,1)$. This confirms Theorem~\ref{thm:product-identity-d} in this case.
\end{example}

\section{Proof of Theorem~\ref{thm:product-identity-d}, part 1}\label{sec:thm-proof1}

In this section, we prove Theorem~\ref{thm:product-identity-d}.

\subsection{Identification with half space Macdonald process}\label{sec: macdonald process}

Assume that $\msf{v}$ and $\msf{w}$ are chosen as in Theorem \ref{thm:product-identity-d} and that $\msf{w}_{n}=1$. Using Theorem~\ref{verformula}, we have
\begin{equation}\label{eq:super-explicit-v}
    \begin{split}
        \msV(\bs z) & = \sum_{\bs{d}} \left(\frac{q}{\hbar}\right)^{N(\bs d)}\bs{z}^{\bs{d}} \left(\prod_{j=1}^{\msf v_n} \frac{(\hbar^{2j-1})_{d_{n,j}}}{(q \hbar^{2j-2})_{d_{n,j}}}\right)       \left(\prod_{i=1}^{n-3} \prod_{j=1}^{\msf v_i} \prod_{k=1}^{\msf v_{i+1}} \frac{\left(\hbar^{k-j}\right)_{d_{i+1,k} - d_{i,j}}}{\left(q \hbar^{k-j-1}\right)_{d_{i+1,k} - d_{i,j}}}\right)                       \\ &
        \left(\prod_{j=1}^{\msf v_{n-2}} \prod_{k=1}^{\msf v_{n-1}} \frac{\left(\hbar^{2k-j}\right)_{d_{n-1,k} - d_{n-2,j}}}{\left(q \hbar^{2k-1-j}\right)_{d_{n-1,k} - d_{n-2,j}}}\right) \left(\prod_{j=1}^{\msf v_{n-2}} \prod_{k=1}^{\msf v_{n}} \frac{\left(\hbar^{2k-1-j}\right)_{d_{n,k} - d_{n-2,j}}}{\left(q \hbar^{2k-j-2}\right)_{d_{n,k} - d_{n-2,j}}}\right) \\ &
        \left(\prod_{i=1}^{n-2} \prod_{j=1}^{\msf v_i} \prod_{k=1}^{\msf v_i} \frac{\left(q \hbar^{k-j}\right)_{d_{i,k} - d_{i,j}}}{\left(\hbar^{k-j+1}\right)_{d_{i,k} - d_{i,j}}}\right)
        \left(\prod_{j=1}^{\msf v_{n-1}} \prod_{k=1}^{\msf v_{n-1}} \frac{\left(q \hbar^{2(k-j)}\right)_{d_{n-1,k} - d_{n-1,j}}}{\left(\hbar^{2(k-j)+1}\right)_{d_{n-1,k} - d_{n-1,j}}}\right)                                                                                                                                                                            \\ & \left(\prod_{j=1}^{\msf v_n} \prod_{k=1}^{\msf v_n} \frac{\left(q \hbar^{2(k-j)}\right)_{d_{n,k} - d_{n,j}}}{\left(\hbar^{2(k-j)+1}\right)_{d_{n,k} - d_{n,j}}}\right),  \\
    \end{split}
\end{equation}
where the multi-index $\bs d$ gives a reverse plane partition over the corresponding poset. We will first rewrite this as a sum over certain tuples of interlacing partitions. 

Define integers $l_{0}=0,l_1,\ldots,l_{n-1},l_{n}=0$ by
\[
l_i = \begin{cases}
    \msf{v}_{i} & \text{if $1\leq i \leq n-2$} \\
    \msf{v}_{n-2}+\msf{v}_{n-1} & \text{if $i=n-1$}
\end{cases}
\]
To express the interlacing relations, we define a sign function as follows.
\begin{definition}\label{def:sign-fn}
    The sign function $\tau : \{1,2,\ldots ,n\} \to \{1,-1\}$ is defined by
    \[
        \tau(i):=(-1)^{l_{i}-l_{i-1}-1} 
    \]
\end{definition}

\begin{definition}
We say that a tuple of partitions $\bs{\lambda} := \left(\lambda^{0}=\emptyset,\lambda^{1}, \ldots, \lambda^{n-1}\right)$ interlaces according to $\msf v$ if the following holds:
\begin{itemize}
    \item $\tau(i) = 1 \implies \lambda^{i} \succ \lambda^{i-1}$ for $1 \leq i \leq n-1$,
    \item $\tau(i) = -1 \implies \lambda^{i} \prec \lambda^{i-1}$ for $1 \leq i \leq n-1$,
    \item $l(\lambda^{i}) \leq l_i$ for $1 \leq i \leq n-1$
\end{itemize}
\end{definition}
We write $S_{\msf{v}}$ for the set of all tuples of partitions that interlace according to $\msf v$. It will be convenient to use the convention that $\lambda_{i}=0$ for $i>l(\lambda)$.

\begin{proposition}\label{prop:vertex-fn-lambda}
    We can rewrite the vertex function in (\ref{eq:super-explicit-v}) as
    \begin{equation}\label{eq:lambda-v}
        \begin{split}
        \msV(\bs z) & = \sum_{\bs{\lambda} \in S_{\msf{v}}} \alpha_{\lambda^{n-1}} \left(\prod_{i=1}^{n-2} \beta_{\lambda^{i}, \lambda^{i+1}}\right) \left(\prod_{i=1}^{n-1} \gamma_{\lambda^{i}}\right) {\bs z}^{\bs \lambda},
        \end{split}
    \end{equation}
    where 
    \begin{align}
            \alpha_{\lambda^{n-1}}             & = \prod_{2 \nmid j}^{l_{n-1}} \frac{(\hbar^{l_{n-1}-j+1})_{\lambda_j^{n-1}}}{(q\hbar^{l_{n-1}-j})_{\lambda_j^{n-1}}},     \label{eq:alpha}                                                                                                                   \\
            \beta_{\lambda^{i}, \lambda^{i+1}} & = \prod_{j=1}^{l_i} \prod_{k=1}^{l_{i+1}} \frac{(\hbar^{l_{i+1}-l_{i}-k+j})_{\lambda_k^{i+1}-\lambda_j^{i}}}{(q\hbar^{l_{i+1}-l_{i}-k+j-1})_{\lambda_k^{i+1}-\lambda_j^{i}}} \text{ for $1\leq i \leq n-2$}, \label{eq:beta} \\
            \gamma_{\lambda^{i}}               & = \begin{cases}
                                                       \prod_{j,k=1}^{l_i} \frac{(q\hbar^{j-k})_{\lambda_k^{i}-\lambda_j^{i}}}{(\hbar^{j-k+1})_{\lambda_k^{i}-\lambda_j^{i}}}        & \text{ if $1 \leq i \leq n-2$}, \\
                                                       \prod_{2 \mid j-k}^{l_{n-1}} \frac{(q\hbar^{j-k})_{\lambda_k^{n-1}-\lambda_j^{n-1}}}{(\hbar^{j-k+1})_{\lambda_k^{n-1}-\lambda_j^{n-1}}} & \text{ if $i = n-1$},
                                                   \end{cases}     \label{eq:gamma}
    \end{align}
    and
    \begin{equation*}
        \bs{z}^{\bs{\lambda}} := (q/\hbar)^{N(\bs \lambda)} \left(\prod_{i=1}^{n-2} z_i^{|\lambda^i|}\right) z_{n-1}^{|\lambda^{n-1,e}|} z_n^{|\lambda^{n-1,o}|}.
    \end{equation*}
    Here the ``even" and ``odd" parts of $\lambda^{n-1}$ are defined by $\lambda^{n-1,o}=(\ldots, \lambda^{n-1}_{l_{n-1}-2},\lambda^{n-1}_{l_{n-1}})$ and $\lambda^{n-1,e}=(\ldots, \lambda^{n-1}_{l_{n-1}-3},\lambda^{n-1}_{l_{n-1}-1})$
\end{proposition}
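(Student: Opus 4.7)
The plan is to change variables in \eqref{eq:super-explicit-v} from the reverse plane partition values $\bs d = (d_{i,j})$ to a tuple of partitions $\bs\lambda = (\lambda^1, \ldots, \lambda^{n-1})$ indexed by the colors $1, \ldots, n-1$ rather than by individual poset elements. For each color $i \in [1, n-2]$, the color-$i$ elements of $\pst_p$ form a chain in the induced poset order (by Definition~\ref{dspinposet}), so the RPP condition forces the values $\pi(d_{i,\bullet})$ to be weakly monotonic along this chain; I record them as the partition $\lambda^i_j := d_{i,j}$ of length at most $l_i = \msf{v}_i$.

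For the two spin vertices $n-1$ and $n$, which attach to color $n-2$ in a fork, the covering relations in $\pst_p$ show that the $d_{n-1, \bullet}$ and $d_{n, \bullet}$ elements alternate along a single zigzag chain when interleaved with the $d_{n-2, \bullet}$ elements. I combine the two into one partition $\lambda^{n-1}$ of length at most $l_{n-1} = \msf{v}_{n-1} + \msf{v}_n$, with the even-from-end entries $\lambda^{n-1,e}$ recording the color-$(n-1)$ values and the odd-from-end entries $\lambda^{n-1,o}$ recording the color-$n$ values. This is the unique interleaving consistent with the zigzag ordering, and it is the labeling that makes the $\bs{z}$-weight factor as $z_{n-1}^{|\lambda^{n-1,e}|} z_n^{|\lambda^{n-1,o}|}$.

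Next I verify the interlacing constraints defining $S_{\msf{v}}$. The cross-color covering relations between colors $i$ and $i+1$ force $\lambda^i$ and $\lambda^{i+1}$ to interlace; when $l_{i+1} = l_i + 1$ the interlacing is in one direction (an extra part at the top), and when $l_{i+1} = l_i$ it is in the opposite direction. By Lemma~\ref{lem:char-valid-v:w-last}, these are the only possibilities for $\msf{v}$ giving a nonempty quiver variety, and the two cases are discriminated precisely by $\tau(i+1) = (-1)^{l_{i+1} - l_i - 1}$. An analogous check at the spin fork, using the interleaved description of $\lambda^{n-1}$, produces the correct interlacing between $\lambda^{n-2}$ and $\lambda^{n-1}$. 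This establishes a bijection between reverse plane partitions on $\pst_p$ and tuples $\bs\lambda \in S_{\msf{v}}$.

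With the bijection in hand, it remains to re-group the product in \eqref{eq:super-explicit-v} color-by-color under the substitution $d_{i,j} = \lambda^i_j$. The framing product reindexes to $\alpha_{\lambda^{n-1}}$ once $d_{n,\bullet}$ is read off as the odd-from-end entries of $\lambda^{n-1}$; the cross-color products for colors $\leq n-3$ directly become $\beta_{\lambda^i, \lambda^{i+1}}$; the two spin-fork cross-color products merge into a single $\beta_{\lambda^{n-2}, \lambda^{n-1}}$ after interleaving; and the self-color products become $\gamma_{\lambda^i}$, with the restriction $2 \mid j-k$ in $\gamma_{\lambda^{n-1}}$ arising because the color-$(n-1)$ and color-$n$ self-products in \eqref{eq:super-explicit-v} remain indexed separately. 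The overall $\bs z^{\bs\lambda}$ weight follows by tracking degrees through the interleaving together with a rewriting of $N(\bs d)$ in terms of the $|\lambda^i|$'s. The main obstacle will be the bookkeeping at the spin fork: one must verify that the two asymmetric cross-color products with $\hbar$-exponents $(2k-j, 2k-1-j)$ and $(2k-1-j, 2k-j-2)$ merge into the uniform $(l_{n-1} - l_{n-2} - k + j, l_{n-1} - l_{n-2} - k + j - 1)$ exponents appearing in \eqref{eq:beta}, and that the $(q/\hbar)^{N(\bs d)}$ prefactor redistributes correctly among the K\"ahler variables after the even/odd splitting of $\lambda^{n-1}$.
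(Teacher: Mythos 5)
Your proposal matches the paper's own proof, which is exactly this: define $\bs\lambda$ from $\bs d$ by reading each color class of the reverse plane partition as a partition (interleaving the two spin colors into the single partition $\lambda^{n-1}$ in the zigzag order you describe) and then regroup the terms of \eqref{eq:super-explicit-v}. The one discrepancy is your indexing $\lambda^i_j := d_{i,j}$: since the values $d_{i,j}$ weakly \emph{increase} in $j$, the paper instead sets $\lambda^{i} = (d_{i,\msf v_i}, \dots, d_{i,1})$, and this reversal is what makes each $\lambda^i$ a genuine (weakly decreasing) partition and turns the exponents $k-j$ of \eqref{eq:super-explicit-v} into the exponents $l_{i+1}-l_{i}-k+j$ appearing in \eqref{eq:beta}.
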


\begin{proof}
    Given $\bs d$, we define $\bs \lambda$ by $\lambda^{i} = (d_{i,\msf v_i}, d_{i,\msf v_i - 1}, \dots, d_{i,1})$ for each $i = 1, 2, \dots, n-2$ and $\lambda^{n-1} = (\dots,d_{n-1,2}, d_{n,2} ,d_{n-1,1}, d_{n,1})$. Now the result follows by simply regrouping the terms of \eqref{eq:super-explicit-v}.
\end{proof}

We next interpret the right-hand side of (\ref{eq:lambda-v}) in terms of Macdonald polynomials. Fix variable sets $\rho_{i}$ for $0 \leq i \leq n-1$ For $\bs{\lambda} \in S_{\msf{v}}$ and $0 \leq i \leq n-1$, we define
\[
 \mc{W}(\bs \lambda; i) := \begin{cases}
        \mc{E}_{\lambda^{n-1}}(\rho_i)    & \text { if } i = n, \\
        Q_{\lambda^{i-1} / \lambda^{i}}(\rho_i) & \text { if } \tau(i) = -1 \text{ and } 1\leq i \leq n-1,   \\
        P_{\lambda^{i} / \lambda^{i-1}}(\rho_i) & \text { if } \tau(i) =1 \text{ and } 1\leq i \leq n-1,
    \end{cases}
\]
and let
\[
    \mc{W}(\bs{\lambda}) = \prod_{i=1}^{n} \mc{W}(\bs \lambda; i).
\]
Here, the notations $\mc{E}$, $Q$, and $P$ were defined in Section~\ref{sec:macdonald}.


Comparing with \cite{bbc2020halfspacemacdonald}, one sees that the weight $\mc{\bs \lambda}$ is exactly the weight used in the definition of half space Macdonald processes.


We assume that all the variable sets consist of a single variable, i.e. that $\rho_{i}=(x_{i},0,0,\ldots)$. In this case, $P_{\lambda/\mu}$, $Q_{\lambda/\mu}$, and $\mc{E}_{\lambda}$ become
\begin{align*}
    &P_{\lambda/\mu}(x)=
    \begin{cases}
        \psi_{\lambda/\mu} x_1^{|\lambda|-|\mu|} & \text{ if $\lambda \succ \mu$}, \\
        0                                        & \text{ otherwise},
    \end{cases} \\
&    Q_{\lambda/\mu}(x)=
    \begin{cases}
        \varphi_{\lambda/\mu} x_1^{|\lambda|-|\mu|} & \text{ if $\lambda \succ \mu$}, \\
        0                                           & \text{ otherwise}.
    \end{cases} \\
    & \mc{E}_\lambda(x) = b_{e(\lambda)}^{\text{el}} \psi_{e(\lambda)/\lambda} x_1^{|e(\lambda)|-|\lambda|}
\end{align*}
where the notation is as defined in Section~\ref{sec:macdonald}.

Now we identify the coefficients of the vertex function in Proposition~\ref{prop:vertex-fn-lambda} with the weights as follows.
\begin{lemma}\label{lem:lhs-identification}
    Let $\bs{\lambda} \in S_{\msf{v}}$. After identifying the $x_i$ variables with the $z_i$ variables by 
    \[
         (q/\hbar)^{(\msf{v}-A_{Q} \msf{v})_{i}} z_i = \begin{cases}
            x_i^{\tau(i)} x_{i+1}^{-\tau(i+1)} & \text{ if $1\leq i \leq n-1$},   \\
            x_{n-1}^{\tau(n-1)} x_n^{\tau(n)}          & \text{ if $i = n$},
        \end{cases}
    \]
    we have
    \[
       \mc{W}(\bs \lambda) = \alpha_{\lambda^{n-1}} \left(\prod_{i=1}^{n-2} \beta_{\lambda^{i}, \lambda^{i+1}}\right) \left(\prod_{i=1}^{n-1} \gamma_{\lambda^{i}}\right) {\bs z}^{\bs \lambda},
    \]
\end{lemma}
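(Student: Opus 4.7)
The plan is a direct verification, separating the identity into a \emph{monomial} match (powers of the $x_i$ vs.\ the prescribed $\bs z$-monomial) and a \emph{coefficient} match ($\psi$, $\varphi$ and even-part factors on the left vs.\ $\alpha, \beta, \gamma$ on the right). Under the single-variable specialization $\rho_i=(x_i,0,0,\ldots)$, formulas \eqref{onevar} and \eqref{evenpart-single} reduce each factor of $\mc{W}(\bs\lambda)$ to a monomial in one variable times an explicit combinatorial coefficient:
\[
\mc{W}(\bs\lambda;i)=
\begin{cases}
\psi_{\lambda^{i}/\lambda^{i-1}}\,x_i^{|\lambda^{i}|-|\lambda^{i-1}|} & \tau(i)=+1,\ i\le n-1,\\
\varphi_{\lambda^{i-1}/\lambda^{i}}\,x_i^{|\lambda^{i-1}|-|\lambda^{i}|} & \tau(i)=-1,\ i\le n-1,\\
b^{\mathrm{el}}_{e(\lambda^{n-1})}\,\psi_{e(\lambda^{n-1})/\lambda^{n-1}}\,x_n^{|e(\lambda^{n-1})|-|\lambda^{n-1}|} & i=n,
\end{cases}
\]
so the whole weight is a monomial in $x_1,\ldots,x_n$ with an explicit prefactor.

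For the monomial match, I would simply collect the $x_i$-exponents. Each $x_i$ with $1\le i\le n-1$ accumulates a signed term $\tau(i)(|\lambda^i|-|\lambda^{i-1}|)$ from $\mc{W}(\bs\lambda;i)$, and $x_n$ carries the even-part exponent $|e(\lambda^{n-1})|-|\lambda^{n-1}|$, which in single-variable form equals $|\lambda^{n-1,e}|-|\lambda^{n-1,o}|$ (by the alternating/interlacing structure encoded in the decomposition $\lambda^{n-1}=(\lambda^{n-1,o},\lambda^{n-1,e})$). Substituting the defining rule of the lemma and collecting like terms in $z_i$, the $z_i$-exponents come out to $|\lambda^i|$ for $1\le i\le n-2$, $|\lambda^{n-1,e}|$ and $|\lambda^{n-1,o}|$ for $i=n-1,n$, and the residual $(q/\hbar)^{(\msf v-A_Q\msf v)_i}$ shifts assemble into the factor $(q/\hbar)^{N(\bs\lambda)}$ since $N(\bs\lambda)=\sum_j(\msf v-A_Q\msf v)_j\cdot(\text{degree}_j)$. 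This is a bookkeeping exercise using the definition of $\tau(i)$ and the fact that each $\lambda^i$ has length at most $l_i$.

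For the coefficient match, I would expand $\psi_{\lambda/\mu}$ and $\varphi_{\lambda/\mu}$ using \eqref{phipsi} and \eqref{macnorm}. When $\lambda/\mu$ is a horizontal strip, the sets $R_{\lambda/\mu}\setminus C_{\lambda/\mu}$ and $C_{\lambda/\mu}$ admit product formulas in terms of $q$-Pochhammer ratios indexed by pairs of rows $(j,k)$ with $k>j$ or $k<j$ respectively (see \cite[VI.(6.24)]{macdonald1995symmetric}). Splitting these products into \emph{cross-row} factors, depending on two row indices from distinct levels $\lambda^{i},\lambda^{i+1}$, and \emph{within-row} normalization factors, depending on two row indices from the same $\lambda^i$, matches exactly the shape of $\beta_{\lambda^{i},\lambda^{i+1}}$ in \eqref{eq:beta} and of $\gamma_{\lambda^i}$ in \eqref{eq:gamma}. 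The exponents of $\hbar$ in these Pochhammer ratios are precisely $l_{i+1}-l_{i}-k+j$ and $j-k$, matching the asserted formulas.

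The hardest bookkeeping is the $i=n-1$ level, where the even-part function $\mc E_{\lambda^{n-1}}$ lives. Here I would write $b^{\mathrm{el}}_{e(\lambda^{n-1})}$ as a product over boxes of the virtual partition $e(\lambda^{n-1})$ whose conjugate is even and $\lambda^{n-1}\prec e(\lambda^{n-1})$, then expand $\psi_{e(\lambda^{n-1})/\lambda^{n-1}}$ and reorganize the product into two pieces: the ``odd-row-only'' factor giving $\alpha_{\lambda^{n-1}}$ in \eqref{eq:alpha}, and the ``paired-row'' factor restricted to $2\mid j-k$ giving $\gamma_{\lambda^{n-1}}$ in \eqref{eq:gamma}. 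The main obstacle is verifying that no $q$-Pochhammer factor is over- or under-counted in this decomposition; once the pairing is pinned down, matching $\alpha$ and $\gamma_{\lambda^{n-1}}$ to the explicit Macdonald formulas is a term-by-term comparison. Combining the three matches and the monomial identity yields the lemma.
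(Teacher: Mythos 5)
Your proposal follows essentially the same route as the paper's proof: specialize each factor of $\mc{W}(\bs\lambda)$ to a single variable via \eqref{onevar} and \eqref{evenpart-single}, match the resulting $x$-monomial with $(q/\hbar)^{N(\bs\lambda)}\bs z^{\bs\lambda}$ through the change of variables, and then verify the prefactor identity term by term (a step the paper itself calls ``straightforward (though long)'' and omits, so your extra detail on expanding $\psi$, $\varphi$, and $b^{\mathrm{el}}$ via the explicit Pochhammer formulas is a welcome supplement rather than a deviation). The one point to watch in your monomial bookkeeping is that $|e(\lambda^{n-1})|-|\lambda^{n-1}|$ equals $\tau(n)\left(|\lambda^{n-1,o}|-|\lambda^{n-1,e}|\right)$ rather than $|\lambda^{n-1,e}|-|\lambda^{n-1,o}|$ unconditionally, so the sign of $\tau(n)$ must be tracked when assigning the exponents of $z_{n-1}$ versus $z_{n}$.
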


\begin{proof}
    We expand $\mc{W}(\bs \lambda)$ by writing it as
    \begin{align*}
         &\mc{W}(\bs \lambda) = \left(\prod_{\substack{1 \leq i \leq n-1 \\ \tau(i) = 1}} \psi_{\lambda^{i}/\lambda^{i-1}} x_i^{|\lambda^{i}| - |\lambda^{i-1}|}\right) \left(\prod_{\substack{1 \leq i \leq n-1 \\ \tau(i) = -1}} \varphi_{\lambda^{i-1}/\lambda^{i}} x_i^{|\lambda^{i-1}| - |\lambda^{i}|}\right) \\
         & \left( b_{e(\lambda^{n-1})}^{\text{el}} \psi_{e(\lambda^{n-1})/\lambda^{n-1}} x_{n}^{|e(\lambda^{n-1})|-|\lambda^{n-1}|}\right) 
         \end{align*}
        All the monomials in the $x_i$ are
         \begin{align*}
&\left(\prod_{i=1}^{n-1} x_i^{\tau(i)(|\lambda^{i}|-|\lambda^{i-1}|)}\right) x_{n}^{|e(\lambda^{n-1})|-|\lambda^{n-1}|} \\
&=\left(\prod_{i=1}^{n-2} \left(x_i^{\tau(i)} x_{i+1}^{-\tau(i+1)}\right)^{|\lambda^{i}|}\right) x_{n-1}^{\tau(n-1)|\lambda^{n-1}|} x_{n}^{|e(\lambda^{n-1})|-|\lambda^{n-1}|} \\
&=\left(\prod_{i=1}^{n-2} \left(x_i^{\tau(i)} x_{i+1}^{-\tau(i+1)}\right)^{|\lambda^{i}|}\right) \left(x_{n-1}^{\tau(n-1)} x_{n}^{-\tau(n)}\right)^{|\lambda^{n-1,e}|} \left(x_{n-1}^{\tau(n-1)} x_{n}^{\tau(n)}\right)^{|\lambda^{n-1,o}|}
           \end{align*}
           where the last line follows from the identity $\tau(n)(|\lambda^{n-1,o}|-|\lambda^{n-1,e}|)=|e(\lambda^{n-1})|-|\lambda^{n-1}|$. By our change of variable, this is equal to $(q/\hbar)^{a} \bs z^{\bs \lambda}$ for some $a \in \mathbb{Z}$.

           The remaining terms in $\mc{W}(\bs \lambda)$ contribute
         
         \begin{align*}
         \left(\prod_{i=1}^{n-1} f_i\right) b^{\text{el}}_{e(\lambda^{n-1})} \psi_{e(\lambda^{n-1})/\lambda^{n-1}}
    \end{align*}
   where
    \[
        f_i = \begin{cases}
            \psi_{\lambda^{i}/\lambda^{i-1}}    & \text{ if $\tau(i) = 1$}, \\
            \varphi_{\lambda^{i-1}/\lambda^{i}} & \text{ if $\tau(i) = -1$},
        \end{cases}
    \]
    Thus, we must show
    \begin{align*}
         & \left(\prod_{i=1}^{n-1} f_i\right) b_{e(\lambda^{n-1})}^{\text{el}} \psi_{e(\lambda^{n-1})/\lambda^{n-1}} \left(\frac{q}{\hbar}\right)^{a} \\
         & \quad= \alpha_{\lambda^{n-1}} \left(\prod_{i=1}^{n-2} \beta_{\lambda^{i}, \lambda^{i+1}}\right) \left(\prod_{i=1}^{n-1} \gamma_{\lambda^{i}}\right),
    \end{align*}

    This is a straightforward (though long) computation, and we omit the details.
   
    
\end{proof}

\begin{remark}
    The previous proposition shows that these vertex functions coincide with the partition functions of the half-space Macdonald processes from \cite{bbc2020halfspacemacdonald}.
\end{remark}

In the next subsection, we convert the expression on the right-hand side to a product form.

\subsection{Matching with roots}

As in Proposition~2.2 of \cite{bbc2020halfspacemacdonald}, partition functions for half space Macdonald processes can be summed using Cauchy identities. In our setting, this reads
\begin{equation}
    \msV(\bs z)= \sum_{\bs{\lambda}}\mc{W}(\bs \lambda)
     = \prod_{\substack{1 \leq i < j \leq n \\ \tau(i) = 1}} F(x_i x_{j}). \label{sumisproduct}
\end{equation}
We now prove that terms in the product match certain roots. Taking into account our identification of the $x$ and $z$ variables, we have the following.

\begin{lemma}\label{lem:roots-identity}
   We have the equality of sets 
    \begin{align*}
        \{\tau(i) \epsilon_{i}+\tau(j) \epsilon_{j} : 1 \leq i < j \leq n, \tau(i) = 1\} = \Phi_{\mu}^{+}
    \end{align*}
\end{lemma}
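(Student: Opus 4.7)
The plan is to unfold both sides in the orthonormal $\epsilon$-basis. Using $\omega_n = \tfrac{1}{2}\sum_j \epsilon_j$, $\alpha_i = \epsilon_i - \epsilon_{i+1}$ for $i \leq n-1$, and $\alpha_n = \epsilon_{n-1}+\epsilon_n$, a direct computation yields the $\epsilon_i$-coefficients of $\mu = \omega_n - \sum_i \msf v_i \alpha_i$:
\[
    \mu_i = \tfrac{1}{2} - (\msf v_i - \msf v_{i-1}) \ (1\le i\le n-2),\quad \mu_{n-1} = \tfrac{1}{2} - (\msf v_{n-1}+\msf v_n - \msf v_{n-2}),\quad \mu_n = \tfrac{1}{2} - (\msf v_n - \msf v_{n-1}),
\]
with the convention $\msf v_0 = 0$. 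By Lemma~\ref{lem:char-valid-v:w-last}, each subtracted quantity lies in $\{0,1\}$, so $\mu = \tfrac{1}{2}(s_1,\ldots,s_n)$ for some $s_i \in \{\pm 1\}$; this is just the statement that $\mu$ is a weight of the spin representation.

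From this diagonal form of $\mu$, I can read off $\Phi_\mu^+$ coordinate-wise. Using $(\mu, \epsilon_i \pm \epsilon_j) = (s_i \pm s_j)/2$ and ranging over $i < j$,
\[
    \Phi_\mu^+ = \{\epsilon_i - \epsilon_j : s_i = -1,\, s_j = +1\} \,\cup\, \{\epsilon_i + \epsilon_j : s_i = s_j = -1\}.
\]

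The core of the argument is to show $\tau(i) = -s_i$ for every $i$. For $1 \leq i \leq n-2$ this is immediate: $l_i - l_{i-1} = \msf v_i - \msf v_{i-1} \in \{0,1\}$, so the formula $\tau(i) = (-1)^{l_i - l_{i-1} - 1}$ evaluates to $-1$ exactly when $\msf v_i = \msf v_{i-1}$ (i.e.\ $s_i = +1$) and to $+1$ exactly when $\msf v_i = \msf v_{i-1}+1$ (i.e.\ $s_i = -1$). The cases $i = n-1$ and $i = n$ are more delicate: here $l_n - l_{n-1}$ need not lie in $\{0,1\}$, so the sign of $\tau(n)$ must be extracted from the parity of $l_{n-1}$ and linked to $s_n$ through the dichotomies of Lemma~\ref{lem:char-valid-v:w-last}. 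A direct case split on the two possible values of $\msf v_{n-1}+\msf v_n - \msf v_{n-2}$ and of $\msf v_n - \msf v_{n-1}$ confirms $\tau(n-1) = -s_{n-1}$ and $\tau(n) = -s_n$. This parity-based case analysis at the two fork vertices is the main technical obstacle.

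With $\tau = -s$ in hand, the lemma follows at once. The condition $\tau(i) = 1$ is equivalent to $s_i = -1$, and for such $i$ the element $\tau(i)\epsilon_i + \tau(j)\epsilon_j = \epsilon_i - s_j\epsilon_j$ is $\epsilon_i + \epsilon_j$ when $s_j = -1$ and $\epsilon_i - \epsilon_j$ when $s_j = +1$. Splitting the set of pairs $(i,j)$ with $i<j$ and $s_i = -1$ according to the sign of $s_j$ reproduces exactly the two families comprising $\Phi_\mu^+$, completing the proof.
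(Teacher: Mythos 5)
Your proposal is correct and follows essentially the same route as the paper: express $\mu$ diagonally as $\tfrac12\sum_i s_i\epsilon_i$ with $s_i\in\{\pm1\}$, establish $\tau(i)=-s_i$ (the paper states this as $a_i=\tfrac12\iff\tau(i)=-1$ with the verification left implicit, whereas you spell out the case analysis, including the parity argument needed at $i=n$ where $l_n-l_{n-1}$ leaves $\{0,1\}$), and then match the two sets by computing $(\mu,\epsilon_i\pm\epsilon_j)$ coordinate-wise. The only cosmetic difference is that you flag $i=n-1$ as delicate, though $l_{n-1}-l_{n-2}=\msf v_{n-1}+\msf v_n-\msf v_{n-2}\in\{0,1\}$ makes it behave exactly like the cases $i\le n-2$.
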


\begin{proof}
By definition,
\[
\mu=\omega_{n}-\sum_{i=1}^{n} \msf{v}_{i} \alpha_{i}=\sum_{i=1}^{n-1}\left(\frac{1}{2}-l_{i}+l_{i-1}\right)\epsilon_{i}+\left(\frac{1}{2} -\msf{v}_{n}+\msf{v}_{n-1}\right)\epsilon_{n}
\]
Write the above expression as $\mu=\sum_{i=1}^{n} a_{i} \epsilon_{i}$. We note that $a_{i} \in \{1/2,-1/2\}$ for all $i$, and $a_{i}=1/2 \iff \tau(i)=-1$. 

Let $\alpha\in \Phi^{+}$, i.e., $\alpha=\epsilon_{i} + s \epsilon_{j}$ where $i<j$ and $s\in \{-1,1\}$. Then $(\alpha,\mu)=a_{i}+s a_{j}$. So 
\[
(\alpha,\mu)<0 \iff a_{i}=-\frac{1}{2} \text{ and } s a_{j}=-\frac{1}{2} \iff \tau(i)=1 \text{ and } \tau(j)=s
\]
This concludes the proof.

\end{proof}

Combining Lemma~\ref{lem:lhs-identification}, Equation~\eqref{sumisproduct}, and Lemma~\ref{lem:roots-identity} concludes the proof of Theorem~\ref{thm:product-identity-d} in the case where the framing is at a spin node.

\section{Proof of Theorem~\ref{thm:product-identity-d}, part 2}\label{sec:thm-proof2}

Next we prove Theorem~\ref{thm:product-identity-d} when the framing is at the first node: $\msf{w}=(1,0,\ldots,0)$. Despite the simplicity of this setting in the comparison to framing at a spin vertex, the language of half-space Macdonald processes does not apply. So we must develop the relationship with Macdonald polynomials from scratch.

\subsection{Reduction to two cases}

Recall Lemma~\ref{lem:framing-first-configs}, which describes the $\msf{v}$ such that $\mc{M}(\msf{v},\msf{w})$ is nonempty. If $\msf{v}_{i} < 2$ for all $i$, then the quiver varieties is a quotient by an abelian group and Theorem~\ref{thm:product-identity-d} follows from repeated applications of the $q$-binomial theorem. So we can assume that $\msf{v}$ is of the form $\msf v = (1^k, 2^{n-k-2}, 1, 1)$ where $0 \leq k <n-2$. We can further reduce the number of possibilities by the following lemma.

\begin{lemma}\label{delta1-reduction}
    Let $\msf v = (1^k, 2^{n-k-2}, 1, 1)$ where $k \geq 2$. Let $\msf{v}'=(0,1^{k-1},2^{n-k-2},1,1)$ and $\msf{w}'=(0,1,0,\ldots,0)$. If Theorem \ref{thm:product-identity-d} holds for $\mc{M}(\msf{v}',\msf{w}')$, then it holds for $\mc{M}(\msf{v},\msf{w})$.
\end{lemma}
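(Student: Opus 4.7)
The plan is to sum out the bottom-of-chain element in the poset of the unique torus-fixed point, producing a single extra $q$-binomial factor and matching the remainder with the vertex function of the reduced variety. Applying Theorem~\ref{verformula} to $\msV(\bs z; \msf v, \msf w)$, the poset $\pst_p$ has a unique element $d_{1,1}$ of color $1$ lying immediately below the unique element $d_{2,1}$ of color $2$, so that $\pst_p$ and the analogous poset $\pst_{p'}$ of $\mc{M}(\msf{v}',\msf{w}')$ differ exactly by $d_{1,1}$. Because $b_1 = -\msf v_2 + \msf w_1 = 0$, the integer $N(\pi)$ has no dependence on $d_1 := \pi(d_{1,1})$, and the only $d_1$-dependent factors in the summand are the framing factor $(\hbar)_{d_1}/(q)_{d_1}$, the edge $1\to 2$ factor $(\hbar)_{d_2-d_1}/(q)_{d_2-d_1}$ (with trivial weight ratio since $\weight(d_{1,1})=\weight(d_{2,1})=a_{1,1}$), and the K\"ahler monomial $z_1^{d_1}$.

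The inner $d_1$-sum evaluates by matching coefficients in the two-variable Cauchy identity for one-row Macdonald polynomials:
\[
\sum_{d_1=0}^{d_2} z_1^{d_1} \frac{(\hbar)_{d_1}(\hbar)_{d_2-d_1}}{(q)_{d_1}(q)_{d_2-d_1}} \;=\; \frac{(\hbar)_{d_2}}{(q)_{d_2}}\, P_{(d_2)}(z_1, 1).
\]
The prefactor is precisely the framing factor at vertex $2$ appearing in $\msV(\bs z;\msf v',\msf w')$, so after this replacement the summand of $\msV(\bs z;\msf v,\msf w)$ differs from that of $\msV(\bs z;\msf v',\msf w')$ only by the insertion of $P_{(d_2)}(z_1, 1)$. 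Applying the one-variable Cauchy identity $\sum_{r\geq 0} P_{(r)}(z_1,1) Q_{(r)}(y) = F(z_1 y) F(y)$, together with the branching formula $P_{(d_2)}(z_1, 1) = \sum_s P_{(d_2)/(s)}(z_1)$ and the Pieri rule, absorbs this insertion into the outer $d_2$-summation; this extracts a single $F$-factor and leaves behind the sum defining $\msV(\bs z;\msf v',\msf w')$.

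On the root-theoretic side, direct computation gives $\mu = -\epsilon_{k+1}$ and $\mu' = \epsilon_1 - \epsilon_{k+1}$, so that $\Phi_\mu^+ \setminus \Phi_{\mu'}^+ = \{\epsilon_1 + \epsilon_{k+1}\}$; under $e^{\alpha_i} = (q/\hbar)^{(\msf v - A_Q \msf v)_i} z_i$, the monomial $e^{\epsilon_1+\epsilon_{k+1}}$ matches the argument of the extracted $F$-factor, completing the reduction once the formula is assumed for $\msV(\bs z;\msf v',\msf w')$. The principal obstacle is the Cauchy-absorption step: the $d_2$-dependent edge contributions at vertex $2$ (and in particular the edge $2 \to 3$ Pochhammers $(\hbar)_{p_{3,\cdot}-d_2}/(q)_{p_{3,\cdot}-d_2}$) must combine with the inserted $P_{(d_2)}(z_1,1)$ in precisely the right way to produce the clean factor $F(e^{\epsilon_1+\epsilon_{k+1}})$; this requires interpreting those Pochhammer ratios as skew one-row $Q$-polynomials and organizing the sum so that a branching identity applies.
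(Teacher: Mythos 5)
Your opening moves are sound. Since $b_1=-\msf v_2+\msf w_1=0$ and $\weight(d_{1,1})=\weight(d_{2,1})=a_{1,1}$, the only $d_1$-dependence in the localization formula is indeed $\tfrac{(\hbar)_{d_1}(\hbar)_{d_2-d_1}}{(q)_{d_1}(q)_{d_2-d_1}}z_1^{d_1}$, and your evaluation $\sum_{d_1=0}^{d_2}(\cdots)=Q_{(d_2)}(z_1,1)=\tfrac{(\hbar)_{d_2}}{(q)_{d_2}}P_{(d_2)}(z_1,1)$ is correct. Your root bookkeeping is also correct: $\mu=-\epsilon_{k+1}$ and $\Phi^+_{\mu}\setminus\Phi^+_{\mu'}=\{\epsilon_1+\epsilon_{k+1}\}$, so the factor to be extracted is $F(e^{\epsilon_1+\epsilon_{k+1}})$ with $\epsilon_1+\epsilon_{k+1}=\alpha_1+\cdots+\alpha_k+2(\alpha_{k+1}+\cdots+\alpha_{n-2})+\alpha_{n-1}+\alpha_n$, i.e.\ a monomial proportional to $z_1\cdots z_k(z_{k+1}\cdots z_{n-2})^2z_{n-1}z_n$. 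This actually disagrees with the paper's own proof, which asserts $\msV(\bs z)=F(z_1z_2)\msV'(\bs z)$ and $(\mu,\alpha_1+\alpha_2)<0$; for $k\ge 2$ both assertions fail (e.g.\ the coefficient of $z_1z_2$ in $\msV$ vanishes because the chain forces $\pi(d_{2,1})\le\pi(d_{3,1})$, while $F(z_1z_2)\msV'$ has the nonzero coefficient $\tfrac{1-\hbar}{1-q}$, and $(-\epsilon_{k+1},\epsilon_1-\epsilon_3)\ge 0$). So your version of the target identity is the one consistent with Theorem~\ref{thm:product-identity-d}.

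The gap is the absorption step, and it is the entire content of the lemma. After your first move you must prove
$\sum_{\pi'}W'(\pi')\,P_{(d_2)}(z_1,1)=F(e^{\epsilon_1+\epsilon_{k+1}})\sum_{\pi'}W'(\pi')$,
where $W'$ is the summand of $\msV'$ and $d_2=\pi'(d_{2,1})$. This is a genuinely multivariate summation identity, not a term-by-term one, and ``Cauchy plus branching plus Pieri'' is a plan rather than an argument. Concretely: (i) the identity you invoke, $\sum_r P_{(r)}(z_1,1)Q_{(r)}(y)=F(z_1y)F(y)$, produces \emph{two} $F$-factors, and you do not say against which single variable $y$ the remaining sum pairs $d_2$, nor why the surplus $F(y)$ is exactly a factor of $\msV'$ that is consumed and regenerated; knowing only the product form of $\msV'$ does not let you evaluate the sum with the $d_2$-dependent insertion. (ii) To actually organize the remaining Pochhammer ratios you need the full Macdonald representation of the general-$k$ chain, i.e.\ the analogue of Lemma~\ref{verequalsmac2} in which the prefix of the chain is written as $Q_{(b)/(c_1)}(x_k)\cdots Q_{(c_{k-1})/(c_k)}(x_1)$ and collapsed by branching to $Q_{(b)/(c_k)}(x_1,\dots,x_k)$, followed by the Pieri, inversion and Cauchy steps. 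But once that representation is written down, the same manipulations give $\msV=\prod_{i\ne k+1}F(x_{k+1}x_i)\prod_{i\ge k+2}F(x_{k+1}x_i^{-1})$ directly for every $k$, proving the theorem without any reduction; so the honest completion of your step either remains unproved or renders the lemma superfluous. As written, the proposal identifies the correct statement and the correct first step but does not establish the reduction.
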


\begin{proof}
Let $\msV(\bs z)=\msV(z_1,\ldots,z_n)$ and $\msV'(\bs z)=\msV'(z_{2},\ldots,z_{n})$ be the vertex functions of $\mc{M}(\msf{v},\msf{w})$ and $\mc{M}(\msf{v}',\msf{w}')$ respectively. Theorem~\ref{verformula} implies that 
\[
\msV(\bs z)= F(z_1 z_2) \msV'(\bs z)
\]

Associated to $\msf{v}$ and $\msf{v}'$ are weights $\mu=\omega_1-\sum_{i=1}^{n} \msf{v}_{i} \alpha_{i}$ and $\mu'=\omega_{2}-\sum_{i=2}^{n} \msf{v}'_{i} \alpha_{i}$, where for the latter we view $D_{n-1} \hookrightarrow D_{n}$.

Applying Theorem \ref{thm:product-identity-d} to $\msV'(\bs z)$ gives
\[
\msV(\bs z)= F(z_1 z_2) \prod_{\alpha \in \Phi_{\mu'}^{+}} F(e^{\alpha})
\]
where we embed $D_{n-1} \hookrightarrow D_{n}$ by forgetting the first node.

 So $\mu-\mu'=\omega_{1}-\omega_{2}-\alpha_{1}=-\epsilon_{1}$. Thus any positive root of $D_{n-1}$ pairing negatively with $\mu'$ will also pair negatively with $\mu$. We also have $\mu=\epsilon_{3}+\sum_{i=3}^{n} \msf{v}_{i} \alpha_{i}$, from which it follows that $(\mu,\alpha_1+\alpha_{2})<0$. This concludes the proof.

\end{proof}

By Lemma~\ref{delta1-reduction}, to prove Theorem~\ref{thm:product-identity-d}, it suffices to prove it when $\msf v=(2^{n-2},1,1)$ and $\msf v =(1,2^{n-3},1,1)$.

\subsection{Case of \texorpdfstring{$\msf v = (2^{n-2}, 1^2)$}{v = (2^{n-2}, 1^2)}}

Assume that $\msf{v}=(2^{n-2},1,1)$. Theorem~\ref{verformula} and Definition~\ref{dfundposet} give the following formula for $\msV(\bs z)$:
\begin{align*}
     & \sum_{\bs d}
    \prod_{i=1}^{n-3}
    \left(\frac{(\hbar)_{d_{i+1,1}-d_{i,1}}}{(q)_{d_{i+1,1}-d_{i,1}}}
    \frac{(\hbar^{n-i-1})_{d_{i+1,2}-d_{i,1}}}{(q \hbar^{n-i-2})_{d_{i+1,2}-d_{i,1}}}
    \frac{(\hbar^{2+i-n})_{d_{i+1,1}-d_{i,2}}}{(q \hbar^{1+i-n})_{d_{i+1,1}-d_{i,2}}}
    \frac{(1)_{d_{i+1,2}-d_{i,2}}}{(q \hbar^{-1})_{d_{i+1,2}-d_{i,2}}}\right)
    \\
     & \cdot
    \frac{(\hbar)_{d_{n-1,1}-d_{n-2,1}}}{(q)_{d_{n-1,1}-d_{n-2,1}}}
    \frac{(1)_{d_{n-1,1}-d_{n-2,2}}}{(q \hbar^{-1})_{d_{n-1,1}-d_{n-2,2}}}
    \frac{(\hbar)_{d_{n,1}-d_{n-2,1}}}{(q)_{d_{n,1}-d_{n-2,1}}}
    \frac{(1)_{d_{n,1}-d_{n-2,2}}}{(q \hbar^{-1})_{d_{n,1}-d_{n-2,2}}}
    \\
     & \cdot
    \frac{(\hbar)_{d_{1,1}}}{(q)_{d_{1,1}}}
    \frac{(\hbar^{n-1})_{d_{1,2}}}{(q \hbar^{n-2})_{d_{1,2}}}
    \left(\prod_{i=1}^{n-2}
    \frac{(q\hbar^{n-i-1})_{d_{i,2}-d_{i,1}}}{(\hbar^{n-i})_{d_{i,2}-d_{i,1}}}
    \frac{(q \hbar^{i+1-n})_{d_{i,1}-d_{i,2}}}{(\hbar^{i+2-n})_{d_{i,1}-d_{i,2}}} \right)
    \left( \frac{q}{\hbar}\right)^{N(\bs d)}\bs z^{\bs d}
\end{align*}
where the multi-index $\bs d$ gives a reverse plane partition over the corresponding poset. Applying \eqref{qpochinv} and changing variables via 
\[
z_{i} = \begin{cases}
     x_1 x_2^{-1} & \text{if $1 \leq i \leq n-2$} \\
     \frac{\hbar}{q} x_{n-1} x_{n}^{-1} & \text{if $i=n-1$} \\
      \frac{\hbar}{q} x_{n-1} x_{n} & \text{if $i=n-1$} 
\end{cases}
\]
we rewrite the above expression as
\begin{align}
    \nonumber & \sum_{\bs d}
    \left(\prod_{i=1}^{n-3}
    \frac{(\hbar)_{d_{i+1,1}-d_{i,1}}}{(q)_{d_{i+1,1}-d_{i,1}}}
    \frac{(\hbar^{n-i-1})_{d_{i+1,2}-d_{i,1}}}{(q \hbar^{n-i-2})_{d_{i+1,2}-d_{i,1}}}
    \frac{(\hbar^{n-i-1})_{d_{i,2}-d_{i+1,1}}}{(q \hbar^{n-i-2})_{d_{i,2}-d_{i+1,1}}}
    \frac{(\hbar)_{d_{i,2}-d_{i+1,2}}}{(q)_{d_{i,2}-d_{i+1,2}}}\right)
    \\
    \nonumber & \cdot
    \frac{(\hbar)_{d_{n-1,1}-d_{n-2,1}}}{(q)_{d_{n-1,1}-d_{n-2,1}}}
    \frac{(\hbar)_{d_{n-2,2}-d_{n-1,1}}}{(q)_{d_{n-2,2}-d_{n-1,1}}}
    \frac{(\hbar)_{d_{n,1}-d_{n-2,1}}}{(q)_{d_{n,1}-d_{n-2,1}}}
    \frac{(\hbar)_{d_{n-2,2}-d_{n,1}}}{(q)_{d_{n-2,2}-d_{n,1}}}                           \\
    \nonumber & \cdot  \frac{(\hbar)_{d_{1,1}}}{(q)_{d_{1,1}}}
    \frac{(\hbar^{n-1})_{d_{1,2}}}{(q \hbar^{n-2})_{d_{1,2}}} \left(
    \prod_{i=1}^{n-2}
    \frac{(q\hbar^{n-i-1})_{d_{i,2}-d_{i,1}}}{(\hbar^{n-i})_{d_{i,2}-d_{i,1}}}
    \frac{(q \hbar^{n-i-2})_{d_{i,2}-d_{i,1}}}{(\hbar^{n-i-1})_{d_{i,2}-d_{i,1}}} \right) \\
              & \cdot  \left(\prod_{\substack{1 \leq i \leq n                             \\ i \neq n-1}}x_i^{d_{i} - d_{i-1}}\right) x_{n-1}^{d_{n-1}+d_{n}-d_{n-2}} \label{verexpanded}
\end{align}
where $d_{i}:=\sum_{j=1}^{\msf{v}_{i}} d_{i,j}$ for $1 \leq i \leq n$ and $d_{0}:=0$. As in the Lemma~\ref{lem:lhs-identification}, we write this in terms of Macdonald polynomials.

\begin{lemma}\label{verequalsmac}
    The vertex function is equal to
    \begin{multline}
        \msV(\bs z) =  \sum \varphi_{\nu^{0}/(a^{n-2})} \left(\prod_{i=2}^{n} x_i \right)^{-a}  \\ Q_{(a)}(x_1) P_{|\nu^{0}|-(n-2)a}(x_1)    \prod_{i=0}^{n-2} P_{\nu^{i}/\nu^{i+1}}(x_{i+2}), \label{vermac}
    \end{multline}
    where the sum is taken over all $n$-tuples of partitions $(\nu^{0}, \nu^{1}, \dots, \nu^{n-1})$ and all $a \in \NN$ such that
    \[
        \emptyset=\nu^{n-1} \prec \nu^{n-2} \prec  \nu^{n-1} \prec \ldots \prec \nu^{0} \succ (a^{n-2}).
    \]
\end{lemma}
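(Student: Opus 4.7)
The plan is to construct an explicit bijection between the reverse plane partitions $\bs d$ indexing \eqref{verexpanded} and the tuples $(\nu^0,\dots,\nu^{n-2},a)$ in the sum on the right of \eqref{vermac}, and then match the summands term by term. Given $\bs d$, I would set $a := d_{n,1}$, $\nu^{n-2} := (d_{n-1,1})$, and for $0\leq i\leq n-3$ define $\nu^i$ to be the partition of length $n-1-i$ with first part $d_{i+1,2}$, last part $d_{i+1,1}$, and all $n-3-i$ middle parts equal to $a$. The poset relations of Definition~\ref{dfundposet} translate exactly to the interlacings $\nu^0\succ\nu^1\succ\cdots\succ\nu^{n-1}=\emptyset$ and $\nu^0\succ(a^{n-2})$. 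Conversely, $\nu^0\succ(a^{n-2})$ together with $\nu^0\succ\nu^1$ forces the middle entries of $\nu^1$ to equal $a$, and iterating forces the middle entries of every $\nu^i$ to equal $a$, so $\bs d$ can be recovered uniquely from the tuple.

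For the monomial part I would expand the right-hand side using the one-variable specializations \eqref{onevar}, namely $P_{(r)}(x)=x^r$, $Q_{(r)}(x)=\frac{(\hbar)_r}{(q)_r}x^r$, and $P_{\nu^i/\nu^{i+1}}(x_{i+2})=\psi_{\nu^i/\nu^{i+1}}\,x_{i+2}^{|\nu^i|-|\nu^{i+1}|}$. Substituting the explicit $|\nu^i|$ from the bijection, the exponents telescope and, together with the $\bigl(\prod_{i=2}^n x_i\bigr)^{-a}$ factor, match the monomial in \eqref{verexpanded} after the stated change of variables and absorption of the $(q/\hbar)^{N(\bs d)}$ prefactor.

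The substantive step is the scalar identity
\[
\varphi_{\nu^0/(a^{n-2})}\,\frac{(\hbar)_a}{(q)_a}\prod_{i=0}^{n-3}\psi_{\nu^i/\nu^{i+1}}\;=\;\text{product of the $q$-Pochhammer factors of \eqref{verexpanded}}.
\]
For $0\leq i\leq n-3$ the skew shape $\nu^i/\nu^{i+1}$ decomposes into three disjoint horizontal strips: a top strip of width $d_{i+1,2}-d_{i+2,2}$ in row $1$, a middle strip of width $a-d_{i+2,1}$ in row $n-2-i$, and a bottom strip of width $d_{i+1,1}$ in row $n-1-i$. Since $\psi_{\lambda/\mu}$ is the product over boxes $s\in\mu$ lying in a row but not a column of $\lambda/\mu$ of $b_\mu(s)/b_\lambda(s)$, the boxes of $\nu^{i+1}$ inside the rectangular block of $a$'s cancel between $\nu^{i+1}$ and $\nu^i$; only boxes near the three strips contribute nontrivially. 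I would match the top-strip contributions to the factor $\frac{(\hbar)_{d_{i,2}-d_{i+1,2}}}{(q)_{d_{i,2}-d_{i+1,2}}}$, middle-strip contributions to the $\frac{(\hbar^{n-i-1})_\bullet}{(q\hbar^{n-i-2})_\bullet}$-type factors, and bottom-strip contributions to $\frac{(\hbar)_{d_{i+1,1}-d_{i,1}}}{(q)_{d_{i+1,1}-d_{i,1}}}$, with the self-interaction denominators in \eqref{verexpanded} arising by telescoping between consecutive $\psi_{\nu^{i-1}/\nu^i}$ and $\psi_{\nu^i/\nu^{i+1}}$. The Pieri coefficient $\varphi_{\nu^0/(a^{n-2})}$ absorbs the initial factors involving $d_{1,1}$ and $d_{1,2}$, while the spin-node factors come from the last skew $\nu^{n-3}/\nu^{n-2}$ together with $\frac{(\hbar)_a}{(q)_a}$ from $Q_{(a)}(x_1)$ at $a=d_{n,1}$.

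The main obstacle will be this last identity. Arm--leg computations for a partition with a large rectangular middle block and short extremities require careful bookkeeping: most boxes contribute trivially, and the nontrivial factors come only from boxes in row $1$ or row $n-2-i$ of $\nu^{i+1}$ whose arms in $\nu^i$ differ because of the top or middle strip. Tracking how these local arm changes combine across consecutive $i$ to reproduce both the cross-edge factors and the self-interaction factors of \eqref{verexpanded} is the main computational work, but once it is carried out the identification of the two sides is direct.
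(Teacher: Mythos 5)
Your overall strategy is the same as the paper's: build an explicit bijection between the reverse plane partitions $\bs d$ indexing \eqref{verexpanded} and the interlacing tuples $(\nu^{0},\dots,\nu^{n-1},a)$, then match summands term by term; and your observation that the interlacing conditions force all middle parts of every $\nu^{i}$ to equal $a$ is correct, so your assignment really is a bijection onto the index set. The problem is that it is the \emph{wrong} bijection, and the matching fails already at the level of monomials, before any arm--leg bookkeeping. With your assignment ($a=d_{n,1}$, $\nu^{i}$ having first part $d_{i+1,2}$ and last part $d_{i+1,1}$) one gets $|\nu^{i}|=d_{i+1}+(n-3-i)a$, where $d_{j}:=\sum_{k}d_{j,k}$, so the exponent of $x_{i+2}$ contributed by $\bigl(\prod_{i=2}^{n}x_{i}\bigr)^{-a}\prod_{i}P_{\nu^{i}/\nu^{i+1}}(x_{i+2})$ is $-a+|\nu^{i}|-|\nu^{i+1}|=d_{i+1}-d_{i+2}$, whereas \eqref{verexpanded} requires $x_{j}^{\,d_{j}-d_{j-1}}$; likewise $x_{n-1}$ acquires exponent $d_{n-2}-d_{n-1}-d_{n}$ instead of $d_{n-1}+d_{n}-d_{n-2}$, and $x_{n}$ acquires $d_{n-1}-d_{n}$ instead of $d_{n}-d_{n-1}$. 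Only the $x_{1}$ exponent comes out right, since $|\nu^{0}|-(n-3)a=d_{1}$ under either convention. Because the weight in \eqref{vermac} is not invariant under the involution relating your parametrization to the correct one, no amount of work on the Pochhammer factors can repair this.

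The correct bijection is the complemented/reflected one: set $a=d_{1,1}+d_{1,2}-d_{n,1}$, $\nu^{0}=(d_{1,2},a^{n-3},d_{1,1})$, and for $i\in[2,n-2]$ let $\nu^{i-1}$ have first part $d_{1,1}+d_{1,2}-d_{i,1}$ and last part $d_{1,1}+d_{1,2}-d_{i,2}$ (note the swap of the roles of $d_{i,1}$ and $d_{i,2}$ relative to your proposal), with $\nu^{n-2}=(d_{1,1}+d_{1,2}-d_{n-1,1})$; the single-row index is then $|\nu^{0}|-(n-2)a=d_{n,1}$, so $P_{|\nu^{0}|-(n-2)a}(x_{1})$ contributes $x_{1}^{d_{n,1}}$ and $Q_{(a)}(x_{1})$ contributes $\tfrac{(\hbar)_{a}}{(q)_{a}}x_{1}^{a}$ with $a=d_{1,1}+d_{1,2}-d_{n,1}$, not with $d_{n,1}$. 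Consequently your proposed pairing of $\tfrac{(\hbar)_{a}}{(q)_{a}}$ with the spin-node factors of \eqref{verexpanded}, and your strip decomposition of $\nu^{i}/\nu^{i+1}$, would both have to be redone for the reflected shapes. Once the bijection is corrected, the remaining scalar identification via \eqref{phipsi} and \eqref{macnorm} is the ``straightforward but long'' computation the paper alludes to, and your plan for organizing it by horizontal strips is reasonable.
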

\begin{proof}
    Let $\nu^{i}$ for $0\leq i \leq n-1$ and $a \in \NN$ be as in the statement of the lemma. We write them explicitly as
    \begin{itemize}
        \item $\nu^{i} = (\nu^{i}_1, a^{n-3-i}, \nu^{i}_2)$ for $i \in [0, n-3]$,
        \item $\nu^{n-2}=(\nu^{n-2}_{1})$.
    \end{itemize}
    The assignment
    \begin{itemize}
        \item $d_{1,1} = \nu^{0}_{2}$, $d_{1,2} = \nu^{0}_{1}$,
        \item $d_{i,1} = \nu^{0}_{1}+\nu^{0}_{2} - \nu^{i-1}_1$, $d_{i,2} = \nu^{0}_{1}+\nu^{0}_{2}- \nu^{i-1}_2$ for $i \in [2, n-2]$,
        \item $d_{n-1,1} =\nu^{0}_{1}+\nu^{0}_{2}-\nu^{n-2}_1$,
        \item $d_{n,1} = \nu^{0}_{1}+\nu^{0}_{2}-a$
    \end{itemize}
    defines a bijection between the set of $\left((\nu^{i})_{i=0,\ldots,n-1},a\right)$ satisfying the interlacing conditions and reverse plane partitions $\bs d$ over the poset corresponding to $\msf{v}$. It is now straightforward using Equations~\eqref{phipsi} and~\eqref{macnorm} to check that the corresponding terms of Equations~\eqref{verexpanded} and~\eqref{vermac} coincide.
\end{proof}

We can now sum the right hand side of Equation~\eqref{vermac}.

\begin{lemma}
    The vertex function is equal to
    \[
        \msV(\bs z)= \prod_{i=2}^{n} F(x_1 x_i) F(x_1 x_i^{-1}).
    \]
\end{lemma}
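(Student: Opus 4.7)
Starting from the Macdonald-polynomial form of $\msV$ given in Lemma~\ref{verequalsmac}, my plan is to collapse the nested sums by a sequence of standard identities: branching, Pieri, inversion, and finally Cauchy twice. The first step is to sum out the interlacing chain of inner partitions $\emptyset = \nu^{n-1} \prec \nu^{n-2} \prec \cdots \prec \nu^1 \prec \nu^0$. Starting from $\nu^{n-1} = \emptyset$, I would iteratively apply the single-variable branching identity~\eqref{branching} to pairs of the form $P_{\nu^{i-1}/\nu^i}(x_{i+1})\, P_{\nu^i}(x_{i+2}, \dots, x_n)$, collapsing each to $P_{\nu^{i-1}}(x_{i+1}, x_{i+2}, \dots, x_n)$. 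After $n-2$ such telescopings, the inner sum reduces to the single factor $P_{\nu^0}(x_2, x_3, \dots, x_n)$.

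Next I would introduce $r := |\nu^0| - (n-2)a \geq 0$ so that $P_{|\nu^0|-(n-2)a}(x_1) = x_1^r = P_{(r)}(x_1)$, and reorder the sum so that the innermost variable is $\nu^0 \succ (a^{n-2})$ of fixed size $(n-2)a + r$. The Pieri rule~\eqref{pieri} then gives
\[
\sum_{\substack{\nu^0 \succ (a^{n-2}) \\ |\nu^0| = (n-2)a + r}} \varphi_{\nu^0/(a^{n-2})}\, P_{\nu^0}(x_2, \dots, x_n) = P_{(a^{n-2})}(x_2, \dots, x_n)\, Q_{(r)}(x_2, \dots, x_n).
\]
The inversion identity~\eqref{macinv} in $n-1$ variables absorbs the factor $(\prod_{i=2}^n x_i)^{-a}$, turning $(\prod_{i=2}^n x_i)^{-a}\, P_{(a^{n-2})}(x_2, \dots, x_n)$ into $P_{(a)}(x_2^{-1}, \dots, x_n^{-1})$. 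At this point the vertex function factorizes as
\[
\msV(\bs z) = \left(\sum_{r \geq 0} P_{(r)}(x_1)\, Q_{(r)}(x_2, \dots, x_n)\right)\left(\sum_{a \geq 0} Q_{(a)}(x_1)\, P_{(a)}(x_2^{-1}, \dots, x_n^{-1})\right),
\]
and each factor is a single-variable specialization of the Cauchy identity~\eqref{eq:cauchy} in which only single-row partitions contribute. The two factors evaluate to $\prod_{i=2}^n F(x_1 x_i)$ and $\prod_{i=2}^n F(x_1 x_i^{-1})$ respectively, and multiplying gives the claimed product.

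The main obstacle will be the bookkeeping needed to justify the rearrangements. One must check that every $\nu^0 \succ (a^{n-2})$ has length at most $n-1$ (so that the inversion identity applies in the correct number of variables) and that the change of summation index $(\nu^0, a) \mapsto (a, r, \nu^0)$ repartitions the sum without loss. Both are immediate from the forced shape $\nu^0 = (\nu^0_1, a^{n-3}, \nu^0_{n-1})$ with $\nu^0_1 \geq a \geq \nu^0_{n-1}$. Once these are in place, the chain of identities runs without further difficulty.
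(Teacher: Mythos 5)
Your proposal is correct and follows essentially the same route as the paper's proof: collapse the interlacing chain via the branching rule to get $P_{\nu^0}(x_2,\dots,x_n)$, apply the Pieri rule to replace the sum over $\nu^0 \succ (a^{n-2})$ by $P_{(a^{n-2})}Q_{(r)}$, absorb the prefactor via the inversion identity, and finish with two single-variable Cauchy summations. The extra bookkeeping you flag (that $\nu^0$ has the forced shape $(\nu^0_1, a^{n-3}, \nu^0_{n-1})$ so the inversion identity applies in $n-1$ variables) is exactly the check implicit in the paper's argument.
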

\begin{proof}
    We first apply the branching rule \eqref{branching} to get
    \begin{align*}
         & \sum \varphi_{\nu^{0}/(a^{n-2})} \left(\prod_{i=2}^{n} x_i \right)^{-a} Q_{(a)}(x_1) P_{|\nu^{0}|-(n-2)a}(x_1)    \prod_{i=0}^{n-2} P_{\nu^{i}/\nu^{i+1}}(x_{i+2}) \\
         & =\sum \varphi_{\nu^{0}/(a^{n-2})} \left(\prod_{i=2}^{n} x_i \right)^{-a} Q_{(a)}(x_1) P_{|\nu^{0}|-(n-2)a}(x_1)    P_{\nu^{0}}(x_2,\ldots, x_{n})
    \end{align*}
    where the first sum is over the set described in Lemma~\ref{verequalsmac} and second sum is over the set of $a\geq 0$ and $\nu^{0}$ such that $\nu^{0} \succ (a^{n-1})$.
    Applying the Pieri rule \eqref{pieri} gives
    \begin{align*}
         & \sum_{\substack{a \geq 0                                                                                                                        \\ \nu^{0} \succ (a^{n-2})}} \varphi_{\nu^{0}/(a^{n-2})}
        \left(\prod_{i=2}^{n} x_i \right)^{-a} Q_{(a)}(x_1) P_{|\nu^{0}|-(n-2)a}(x_1)   P_{\nu^{0}}(x_2,\ldots, x_{n})                                 \\
         & =\sum_{a,b \geq 0} \left(\prod_{i=2}^{n} x_i \right)^{-a} Q_{(a)}(x_1)  P_{(b)}(x_1)  P_{(a^{n-2})}(x_2,\ldots, x_{n}) Q_{(b)}(x_2,\ldots,x_n).
    \end{align*}
    Applying the inversion identity \eqref{macinv} gives
    \begin{align*}
         & \sum_{a,b \geq 0} \left(\prod_{i=2}^{n} x_i \right)^{-a} Q_{(a)}(x_1)  P_{(a^{n-2})}(x_2,\ldots, x_{n}) P_{(b)}(x_1) Q_{(b)}(x_2,\ldots,x_n) \\
         & =\sum_{a,b \geq 0} Q_{(a)}(x_1) P_{(a)}(x_2^{-1},\ldots, x_{n}^{-1}) P_{(b)}(x_1) Q_{(b)}(x_2,\ldots,x_n).
    \end{align*}
    Now the Cauchy identity finishes the proof.
\end{proof}

To finish the proof of Theorem~\ref{thm:product-identity-d} in this case, it remains to identify terms in the product with certain roots which is the content of the following lemma.

\begin{lemma}
    \[
\Phi^{+}_{\mu}=\{\epsilon_1\pm \epsilon_{i}\, \mid \, 2 \leq i \leq n\}
    \]
\end{lemma}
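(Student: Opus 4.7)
The statement is purely a root-system computation for the weight $\mu$ attached to $\msf{v}=(2^{n-2},1,1)$ and $\msf{w}=\delta_1$, so the plan is simply to compute $\mu$ explicitly and then evaluate $(\mu,\alpha)$ on each positive root.

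\textbf{Step 1: Compute $\mu$ by telescoping.} Recall that $\omega_1 = \epsilon_1$, $\alpha_i = \epsilon_i - \epsilon_{i+1}$ for $1\le i\le n-1$, and $\alpha_n = \epsilon_{n-1}+\epsilon_n$. I would first compute
\[
\sum_{i=1}^n \msf{v}_i \alpha_i \;=\; 2\sum_{i=1}^{n-2}(\epsilon_i-\epsilon_{i+1}) + (\epsilon_{n-1}-\epsilon_n) + (\epsilon_{n-1}+\epsilon_n).
\]
The last two terms collapse to $2\epsilon_{n-1}$, while the telescoping sum gives $2\epsilon_1 - 2\epsilon_{n-1}$. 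Adding these, the $\epsilon_{n-1}$ contribution cancels and we are left with $2\epsilon_1$. Hence $\mu = \omega_1 - 2\epsilon_1 = -\epsilon_1$.

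\textbf{Step 2: Pair $\mu$ against positive roots.} Every positive root of $D_n$ has the form $\alpha = \epsilon_i + s\epsilon_j$ with $1\le i<j\le n$ and $s\in\{+1,-1\}$. Using orthonormality of the $\epsilon_k$,
\[
(\mu,\alpha) \;=\; (-\epsilon_1,\,\epsilon_i+s\epsilon_j) \;=\; -\delta_{i,1} - s\,\delta_{j,1}.
\]
Since $i<j$ forces $j\ge 2$, the term $\delta_{j,1}$ vanishes, leaving $(\mu,\alpha) = -\delta_{i,1}$. Therefore $(\mu,\alpha)<0$ if and only if $i=1$, which singles out exactly the roots $\epsilon_1\pm\epsilon_j$ for $2\le j\le n$.

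\textbf{Step 3: Conclude.} Combining the two steps gives
\[
\Phi^+_\mu \;=\; \{\alpha\in\Phi^+ : (\mu,\alpha)<0\} \;=\; \{\epsilon_1\pm\epsilon_j : 2\le j\le n\},
\]
as claimed. There is no real obstacle here: the calculation is a two-line telescoping sum followed by reading off an inner product, and it plugs directly into the product identity proved in the preceding lemma (each factor $F(x_1 x_j)$ and $F(x_1 x_j^{-1})$ corresponds, under the chosen change of variables $e^\alpha = (q/\hbar)^{a_j}z_j$, to one root in this list).
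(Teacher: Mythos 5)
Your proof is correct and follows the same route as the paper: compute $\mu=\omega_1-\sum_i\msf{v}_i\alpha_i=-\epsilon_1$ and then observe that the positive roots pairing negatively with $-\epsilon_1$ are exactly $\epsilon_1\pm\epsilon_j$ for $2\le j\le n$. The paper states this in one line; you have simply filled in the telescoping computation and the inner-product check explicitly.
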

\begin{proof}
Since $\msf{v}=(2^{n-2},1,1)$, we have $\mu=\omega_{1}-\sum_{i=1}^{n} \msf{v}_{i} \alpha_{i}=-\epsilon_{1}$. The lemma follows.
\end{proof}

\subsection{Case of \texorpdfstring{$\msf v = (1, 2^{n-3}, 1^2)$}{v = (1, 2^{n-3}, 1^2)}}\label{subsec-12n-3}

Assume that $\msf{v}=(1, 2^{n-3}, 1^2)$. Theorem~\ref{verformula} and Definition~\ref{dfundposet} give the following:
\begin{align*}
    \msV(\bs z)
     & = \sum_{\bs d}
    \left(\prod_{i=1}^{n-3}
    \frac{(\hbar)_{d_{i+1,1}-d_{i,1}}}{(q)_{d_{i+1,1}-d_{i,1}}}
    \frac{(\hbar^{n-i-1})_{d_{i+1,2}-d_{i,1}}}{(q \hbar^{n-i-2})_{d_{i+1,2}-d_{i,1}}} \right)
    \\
     & \cdot \left(\prod_{i=2}^{n-3}
    \frac{(\hbar^{2+i-n})_{d_{i+1,1}-d_{i,2}}}{(q \hbar^{1+i-n})_{d_{i+1,1}-d_{i,2}}}
    \frac{(1)_{d_{i+1,2}-d_{i,2}}}{(q \hbar^{-1})_{d_{i+1,2}-d_{i,2}}} \right)
    \frac{(\hbar)_{d_{n-1,1}-d_{n-2,1}}}{(q)_{d_{n-1,1}-d_{n-2,1}}}
    \\
     & \cdot    \frac{(1)_{d_{n-1,1}-d_{n-2,2}}}{(q \hbar^{-1})_{d_{n-1,1}-d_{n-2,2}}} \frac{(\hbar)_{d_{n,1}-d_{n-2,1}}}{(q)_{d_{n,1}-d_{n-2,1}}}
    \frac{(1)_{d_{n,1}-d_{n-2,2}}}{(q \hbar^{-1})_{d_{n,1}-d_{n-2,2}}}
    \frac{(\hbar)_{d_{1,1}}}{(q)_{d_{1,1}}}                                                                                                        \\
     & \cdot \left( \prod_{i=2}^{n-2}
    \frac{(q\hbar^{n-i-1})_{d_{i,2}-d_{i,1}}}{(\hbar^{n-i})_{d_{i,2}-d_{i,1}}}
    \frac{(q \hbar^{i+1-n})_{d_{i,1}-d_{i,2}}}{(\hbar^{i+2-n})_{d_{i,1}-d_{i,2}}} \right)\left(\frac{q}{\hbar}\right)^{N(\bs d)} \bs z^{\bs d}
\end{align*}
where the multi-index $\bs d$ gives a reverse plane partition over the corresponding poset. Applying \eqref{qpochinv} and changing variables via 
\[
z_{i} = \begin{cases}
    \frac{q}{\hbar} x_1 x_2^{-1} & \text{if $i=1$} \\
    x_{i} x_{i+1}^{-1} & \text{if $2 \leq i \leq n-2$} \\
    \frac{\hbar}{q} x_{n-1}x_{n}^{-1} & \text{ if $i=n-1$} \\
    \frac{\hbar}{q} x_{n-1}x_{n} & \text{ if $i=n$}
\end{cases}
\]
we can rewrite this as
\begin{equation}
    \begin{split}\label{verexpanded2}
        V(\bs z) & = \sum_{\bs d}
        \left(\prod_{i=1}^{n-3}
        \frac{(\hbar)_{d_{i+1,1}-d_{i,1}}}{(q)_{d_{i+1,1}-d_{i,1}}}
        \frac{(\hbar^{n-i-1})_{d_{i+1,2}-d_{i,1}}}{(q \hbar^{n-i-2})_{d_{i+1,2}-d_{i,1}}} \right) \\
                 & \cdot \left(
        \prod_{i=2}^{n-3}
        \frac{(\hbar^{n-i-1})_{d_{i,2}-d_{i+1,1}}}{(q \hbar^{n-i-2})_{d_{i,2}-d_{i+1,1}}}
        \frac{(\hbar)_{d_{i,2}-d_{i+1,2}}}{(q)_{d_{i,2}-d_{i+1,2}}}\right)  \frac{(\hbar)_{d_{n-1,1}-d_{n-2,1}}}{(q)_{d_{n-1,1}-d_{n-2,1}}}
        \\
                 & \cdot \frac{(\hbar)_{d_{n-2,2}-d_{n-1,1}}}{(q)_{d_{n-2,2}-d_{n-1,1}}}
        \frac{(\hbar)_{d_{n,1}-d_{n-2,1}}}{(q)_{d_{n,1}-d_{n-2,1}}}
        \frac{(\hbar)_{d_{n-2,2}-d_{n,1}}}{(q)_{d_{n-2,2}-d_{n,1}}}
        \frac{(\hbar)_{d_{1,1}}}{(q)_{d_{1,1}}}
        \\
                 & \cdot
        \left( \prod_{i=2}^{n-2}
        \frac{(q\hbar^{n-i-1})_{d_{i,2}-d_{i,1}}}{(\hbar^{n-i})_{d_{i,2}-d_{i,1}}}
        \frac{(q \hbar^{n-i-2})_{d_{i,2}-d_{i,1}}}{(\hbar^{n-i-1})_{d_{i,2}-d_{i,1}}} \right)     \\
                 & \cdot \left(\prod_{\substack{1 \leq i \leq n                                   \\ i \neq n-1}} x_i^{d_i-d_{i-1}}\right) x_{n-1}^{d_{n-1}+d_{n}-d_{n-2}}
    \end{split}
\end{equation}
where $d_{i}=\sum_{j=1}^{\msf{v}_{i}} d_{i,j}$ and $d_{0}=0$.
As before, we write this in terms of Macdonald polynomials.
\begin{lemma}\label{verequalsmac2}
    The vertex function is equal to
    \begin{multline}
        \msV(\bs z)=\sum \varphi_{\nu^{0}/(a^{n-3})} \left(\prod_{i=3}^{n} x_i \right)^{-a} \\
        Q_{(a)}(x_2) P_{(b)}(x_2) Q_{(b)/(|\nu^{0}|-(n-3)a)}(x_1)  \prod_{i=0}^{n-3} P_{\nu^{i}/\nu^{i+1}}(x_{i+3}), \label{vermac2}
    \end{multline}
    where the sum is taken over the set of all $(n-1)$-tuples of partitions $(\nu^{0},\nu^{1},\ldots,\nu^{n-2})$ and all $a,b \in \NN$ such that
    \[
        \emptyset = \nu^{n-2} \prec \nu^{n-3} \prec \ldots \prec \nu^{0} \succ (a^{n-3}).
    \]
\end{lemma}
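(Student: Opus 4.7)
The plan is to prove Lemma~\ref{verequalsmac2} by the same two-step strategy used for Lemma~\ref{verequalsmac}: I would exhibit an explicit bijection between reverse plane partitions $\bs d$ over the poset of $\msf v = (1, 2^{n-3}, 1^2)$ and the tuples $(\nu^0, \ldots, \nu^{n-2}, a, b)$ appearing in the sum, and then check that the summands on the right-hand sides of Equations~\eqref{verexpanded2} and~\eqref{vermac2} coincide term by term.

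I would start by unpacking the chain. Combined with $\nu^{n-2} = \emptyset$ and $\nu^0 \succ (a^{n-3})$, the interlacing conditions force $\nu^i = (\nu^i_1, a^{n-4-i}, \nu^i_2)$ for $0 \leq i \leq n-4$ and $\nu^{n-3} = (\nu^{n-3}_1)$, yielding exactly $2n-3$ free parameters, matching the number of cells in the poset of $\msf v$. Continuing the pattern used in Lemma~\ref{verequalsmac}, I would propose the bijection
\begin{align*}
d_{1,1} &= a + b - \nu^0_1 - \nu^0_2, \\
d_{i,j} &= a + b - \nu^{i-2}_j \quad\text{for } 2 \leq i \leq n-2,\ j \in \{1,2\}, \\
d_{n-1,1} &= a + b - \nu^{n-3}_1, \\
d_{n,1} &= b.
\end{align*}
The inverse reads off $b = d_{n,1}$ and $a = d_{2,1} + d_{2,2} - d_{1,1} - d_{n,1}$, and one checks directly that the poset relations on $\bs d$ translate to the interlacing conditions on the $\nu^i$ (together with the nonvanishing condition $b \geq |\nu^0| - (n-3)a$ needed for $Q_{(b)/(|\nu^0|-(n-3)a)}$); for instance, the key inequality $\nu^{n-3}_1 \geq 0$ becomes $d_{2,1} + d_{2,2} \geq d_{1,1} + d_{n-1,1}$, which follows from the basic poset relations $d_{1,1} \leq d_{2,1}$ and $d_{2,2} \geq d_{n-1,1}$.

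It then remains to check that the summands agree under this bijection. Matching the $x_i$-monomials and the prefactor $(q/\hbar)^{N(\bs d)}$ is a short direct computation using the one-variable specializations in~\eqref{onevar}. The substantive work lies in matching the Pochhammer coefficients: the explicit $q$-Pochhammer factors on the left of~\eqref{verexpanded2} must be paired with the coefficients $\varphi_{\nu^0/(a^{n-3})}$, $\varphi_{(b)/(|\nu^0|-(n-3)a)}$, and $\prod_i \psi_{\nu^i/\nu^{i+1}}$ expanded via~\eqref{phipsi} and~\eqref{macnorm}. I expect this to be the main obstacle only in terms of bookkeeping volume; the calculation is structurally identical to the verification in Lemma~\ref{verequalsmac}, the only new feature being that the single framed cell $d_{1,1}$ at node one contributes the factor $\frac{(\hbar)_{d_{1,1}}}{(q)_{d_{1,1}}}$ on the left, which is absorbed by the skew factor $Q_{(b)/(|\nu^0|-(n-3)a)}(x_1)$ on the right.
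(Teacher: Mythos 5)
Your proposal follows the paper's proof essentially verbatim: the same explicit bijection between reverse plane partitions over the poset of $\msf v=(1,2^{n-3},1^2)$ and interlacing tuples $(\nu^{0},\ldots,\nu^{n-2},a,b)$, followed by a term-by-term identification of the summands of \eqref{verexpanded2} and \eqref{vermac2} that is asserted via \eqref{phipsi} and \eqref{macnorm} rather than written out, exactly as in the paper. The one place you deviate --- setting $d_{n,1}=b$ instead of the paper's printed $d_{n,1}=a$ --- is in your favor: only $d_{n,1}=b$ makes the exponent of $x_n$ on the left, namely $d_{n,1}-d_{n-1,1}=\nu^{n-3}_1-a$, agree with the contribution $x_n^{-a}\,P_{\nu^{n-3}/\nu^{n-2}}(x_n)=x_n^{\nu^{n-3}_1-a}$ on the right (and likewise for the exponent of $x_{n-1}$), so the paper's printed assignment appears to contain a typo that your version corrects.
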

\begin{proof}
    Let $a$, $b$, and $\nu^{i}$ be as in the statement of the lemma. We write them explicitly as
    \begin{itemize}
        \item $\nu^{0} = (\nu^{0}_{1}, a^{n-4}, \nu^{0}_2)$,
        \item $\nu^{i} = (\nu^{i}_1, a^{n-4-i}, \nu^{i}_2)$ for $1 \leq i \leq n-4$,
        \item $\nu^{n-3} = (\nu^{n-3}_1)$.
    \end{itemize}
    The assignment
    \begin{itemize}
        \item $d_{1,1} = a+b-\nu^{0}_{1}-\nu^{0}_{2}$,
        \item $d_{2,1} = a+b-\nu^{0}_{1}$, $d_{2,2} = a+b-\nu^{0}_{2}$,
        \item $d_{i,1} = a+b-\nu^{i-2}_1$, $d_{i,2} = a+b - \nu^{i-2}_2$ for $3 \leq i \leq n-2$,
        \item $d_{n-1,1} = a+b-\nu^{n-3}_1$,
        \item $d_{n,1} = a$
    \end{itemize}
    defines a bijection between the set of $\left((\nu^{i})_{i=0,\ldots,n-2},a,b\right)$ satisfying the interlacing conditions and reverse plane partitions $\bs d$ over the poset corresponding to $\msf{v}$. It is now straightforward using Equations~\eqref{phipsi} and~\eqref{macnorm} to check that the corresponding terms of Equations~\eqref{verexpanded2} and~\eqref{vermac2} coincide.
\end{proof}

\begin{lemma}\label{lem:vtxfn-12n-3}
    The vertex function is equal to
    \[
        \msV(\bs z)= \left(\prod_{i \neq 2}^n F(x_2 x_i)\right) \left( \prod_{i =3}^{n} F(x_2 x_i^{-1})\right).
    \]
\end{lemma}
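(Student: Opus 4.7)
The plan is to mirror the strategy used in the previous case $\msf{v}=(2^{n-2},1,1)$, but with an extra summation variable $b$ reflecting the additional framing structure. We begin from the expression in Lemma~\ref{verequalsmac2}:
\[
\msV(\bs z) = \sum \varphi_{\nu^{0}/(a^{n-3})} \left(\prod_{i=3}^{n} x_i \right)^{-a} Q_{(a)}(x_2) P_{(b)}(x_2) Q_{(b)/(|\nu^{0}|-(n-3)a)}(x_1) \prod_{i=0}^{n-3} P_{\nu^{i}/\nu^{i+1}}(x_{i+3}),
\]
where the sum runs over chains $\emptyset = \nu^{n-2} \prec \cdots \prec \nu^{0}$ with $\nu^{0} \succ (a^{n-3})$, and $a,b\in\NN$.

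First I would collapse the product $\prod_{i=0}^{n-3} P_{\nu^{i}/\nu^{i+1}}(x_{i+3})$ by iterating the branching rule~\eqref{branching}, which yields $P_{\nu^{0}}(x_3,\dots,x_n)$ after summing over the internal interlacing partitions $\nu^{1},\dots,\nu^{n-3}$. Next I would perform the sum over $\nu^{0}$: for fixed $a$ and $c := |\nu^{0}|-(n-3)a$, the Pieri rule~\eqref{pieri} gives
\[
\sum_{\substack{\nu^{0} \succ (a^{n-3}) \\ |\nu^{0}| = (n-3)a+c}} \varphi_{\nu^{0}/(a^{n-3})} \, P_{\nu^{0}}(x_3,\dots,x_n) \;=\; P_{(a^{n-3})}(x_3,\dots,x_n)\, Q_{(c)}(x_3,\dots,x_n).
\]

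At this stage the sum becomes
\[
\sum_{a,b,c} \Bigl(\prod_{i=3}^{n} x_i \Bigr)^{-a} Q_{(a)}(x_2) P_{(a^{n-3})}(x_3,\dots,x_n)\, P_{(b)}(x_2)\, Q_{(b)/(c)}(x_1)\, Q_{(c)}(x_3,\dots,x_n).
\]
Summing over $c$, I would use branching~\eqref{branching} \emph{in reverse} to combine
\[
\sum_{c} Q_{(b)/(c)}(x_1)\, Q_{(c)}(x_3,\dots,x_n) \;=\; Q_{(b)}(x_1,x_3,\dots,x_n),
\]
and then apply the inversion identity~\eqref{macinv} for $n-2$ variables to rewrite
\[
\Bigl(\prod_{i=3}^{n} x_i \Bigr)^{-a} P_{(a^{n-3})}(x_3,\dots,x_n) \;=\; P_{(a)}(x_3^{-1},\dots,x_n^{-1}).
\]
The vertex function is now
\[
\msV(\bs z) = \left(\sum_{a} Q_{(a)}(x_2) P_{(a)}(x_3^{-1},\dots,x_n^{-1})\right) \left(\sum_{b} P_{(b)}(x_2)\, Q_{(b)}(x_1,x_3,\dots,x_n)\right).
\]
Since only single-row partitions survive the single-variable specialization at $x_2$, each factor is a genuine Macdonald--Cauchy sum, and the Cauchy identity~\eqref{eq:cauchy} gives
\[
\msV(\bs z) = \prod_{i=3}^{n} F(x_2 x_i^{-1}) \cdot \prod_{\substack{i=1\\ i\neq 2}}^{n} F(x_2 x_i),
\]
which is the desired identity.

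I expect the routine bookkeeping of the iterated branching/Pieri manipulations to be straightforward but notationally heavy; the only conceptually delicate step is recognizing that after Pieri, the variable $c$ can be ``re-branched'' into $Q_{(b)}(x_1,x_3,\dots,x_n)$, which is precisely the maneuver that produces the factors $F(x_2 x_1)$ and $F(x_2 x_i)$ for $i \geq 3$ simultaneously. The main obstacle, as in the spin case, lies in this step: coupling the single framing variable $x_1$ with the ``bulk'' variables $x_3,\dots,x_n$ so that a single application of the Cauchy identity produces the complete product.
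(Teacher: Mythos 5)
Your proposal is correct and follows essentially the same route as the paper's proof: branching to collapse the chain into $P_{\nu^0}(x_3,\dots,x_n)$, Pieri to sum over $\nu^0$, branching in reverse to absorb $c$ into $Q_{(b)}(x_1,x_3,\dots,x_n)$, the inversion identity, and finally the Cauchy identity (with the correct observation that the single-variable specialization at $x_2$ restricts the Cauchy sum to single-row partitions). No gaps.
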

\begin{proof}
    We first apply the branching rule \eqref{branching} to see that the right-hand side of Equation~\eqref{verequalsmac2} is equal to
    \begin{multline*}
        \sum_{\substack{a,b \geq 0 \\ \nu^{0} \succ (a^{n-3})}}   \varphi_{\nu^{0}/(a^{n-3})}
        \left(\prod_{i=3}^{n} x_i \right)^{-a} Q_{(a)}(x_2) P_{(b)}(x_2)  \\ Q_{(b)/(|\nu^{0}|-(n-3)a)}(x_1)  \cdot P_{\nu^{0}}(x_3,\ldots,x_n).
    \end{multline*}
    By the Pieri rule \eqref{pieri}, this is equal to
    \begin{multline*}
        \sum_{a,b,c \geq 0} \left(\prod_{i=3}^{n} x_i \right)^{-a} Q_{(a)}(x_2)  P_{(a^{n-3})}(x_3,\ldots, x_{n}) \\ P_{(b)}(x_2) Q_{(b)/(c)}(x_1) Q_{(c)}(x_3,\ldots,x_n).
    \end{multline*}
    Applying the branching rule, the previous line becomes
    \begin{equation*}
        \sum_{a,b \geq 0} \left(\prod_{i=3}^{n} x_i \right)^{-a} Q_{(a)}(x_2)  P_{(a^{n-3})}(x_3,\ldots, x_{n}) P_{(b)}(x_2) Q_{(b)}(x_1,x_3,\ldots,x_n).
    \end{equation*}
    Applying the inversion identity \eqref{macinv}, this is equal to
    \begin{equation*}
        \sum_{a,b \geq 0} Q_{(a)}(x_2) P_{(a)}(x_3^{-1},\ldots, x_{n}^{-1}) P_{(b)}(x_2) Q_{(b)}(x_1,x_3,\ldots,x_n).
    \end{equation*}
    Now the Cauchy identity finishes the proof.
\end{proof}

To finish the proof of Theorem~\ref{thm:product-identity-d} in this case, it remains to identify terms in the product with certain roots which we do in the following lemma.

\begin{lemma}
    \[
\Phi^{+}_{\mu}=\{\epsilon_{2}+\epsilon_{i} \, \mid \, i \neq 2\} \cup \{\epsilon_{2}-\epsilon_{i} \, \mid \, 3 \leq i \leq n\}
    \]
\end{lemma}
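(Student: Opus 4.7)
The plan is to compute $\mu$ directly from the definition $\mu = \omega_{1} - \sum_{i=1}^{n} \msf{v}_{i} \alpha_{i}$ with $\msf{v} = (1, 2^{n-3}, 1, 1)$, and then simply enumerate positive roots pairing negatively with the resulting vector. First I would recall that $\omega_{1} = \epsilon_{1}$ and that the simple roots are $\alpha_{i} = \epsilon_{i} - \epsilon_{i+1}$ for $1 \leq i \leq n-1$ and $\alpha_{n} = \epsilon_{n-1} + \epsilon_{n}$. Substituting and exploiting the telescoping structure of $\alpha_{1} + 2\alpha_{2} + \cdots + 2\alpha_{n-2} + \alpha_{n-1} + \alpha_{n}$, the $\epsilon_{k}$ coefficients for $3 \leq k \leq n$ all cancel, the $\epsilon_{1}$ coefficient is $1$, and the $\epsilon_{2}$ coefficient is $-1 + 2 = 1$. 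Hence $\sum_{i} \msf{v}_{i} \alpha_{i} = \epsilon_{1} + \epsilon_{2}$, which gives $\mu = -\epsilon_{2}$.

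With $\mu = -\epsilon_{2}$ in hand, the condition $(\mu, \alpha) < 0$ becomes $(\epsilon_{2}, \alpha) > 0$. Running through the list $\Phi^{+} = \{\epsilon_{i} - \epsilon_{j} : i < j\} \cup \{\epsilon_{i} + \epsilon_{j} : i < j\}$, the roots with strictly positive $\epsilon_{2}$-component are exactly $\epsilon_{2} - \epsilon_{j}$ for $3 \leq j \leq n$, $\epsilon_{2} + \epsilon_{j}$ for $3 \leq j \leq n$, and $\epsilon_{1} + \epsilon_{2}$. Collecting the last into the $\epsilon_{2} + \epsilon_{i}$ family as the $i=1$ case produces the claimed description $\Phi^{+}_{\mu} = \{\epsilon_{2} + \epsilon_{i} : i \neq 2\} \cup \{\epsilon_{2} - \epsilon_{i} : 3 \leq i \leq n\}$. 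There is no real obstacle here: the argument is entirely a root-system bookkeeping check, and the only thing worth being careful about is not dropping the contribution of $\alpha_{n} = \epsilon_{n-1} + \epsilon_{n}$ when verifying the cancellation at $\epsilon_{n-1}$ and $\epsilon_{n}$.
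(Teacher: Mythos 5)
Your proposal is correct and follows exactly the paper's argument: both compute $\mu = \omega_1 - \sum_i \msf{v}_i\alpha_i = -\epsilon_2$ and then enumerate the positive roots with strictly positive $\epsilon_2$-coefficient. You simply spell out the telescoping cancellation that the paper leaves implicit.
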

\begin{proof}
    Since $\msf{v}=(1,2^{n-3},1,1)$, we have $\mu=\omega_{1}-\sum_{i=1}^{n} \msf{v}_{i} \alpha_{i}=-\epsilon_{2}$. Clearly, the positive roots in the statement of the lemma are exactly the ones pairing negatively with $\mu$.
\end{proof}


\printbibliography

\end{document}